
\documentclass[11pt]{amsart}
\usepackage{amsmath}
\usepackage{amssymb}
\usepackage{amsthm}
\usepackage{amscd}
\usepackage{fontenc}
\usepackage{url}
\usepackage{hyperref}
\usepackage[all]{xy}
\usepackage{graphicx}
\usepackage{tikz}

\textheight 8.5in
\textwidth=15cm \hoffset=-1.8cm

\numberwithin{equation}{section}

\newcounter{AbcT}

\newtheorem {Theorem}    {Theorem}[section]
\newtheorem* {Theorem1.9}    {Theorem 1.9}
\newtheorem* {Theorem1.6}    {Theorem 1.6}
\newtheorem* {Question1.7}    {Question 1.7}

\newtheorem* {Definition} {Definition} 
\newtheorem* {Remark}	{\bf{Remark}}

\newtheorem {Question}    [Theorem]{Question}

\newtheorem {Lemma}      [Theorem]    {Lemma}
\newtheorem {Corollary}   [Theorem] {Corollary}
\newtheorem {Proposition}[Theorem]    {Proposition}
\newtheorem {Claim}      [Theorem]    {Claim}
\newtheorem {Observation}[Theorem]    {Observation}

\theoremstyle{remark}




\newcounter{DM@bibnum}


\newcommand{\la}{\langle}
\newcommand{\ra}{\rangle}

\def\IA{{\rm IA}}
\def\SL{{\rm SL}}
\def\GL{{\rm GL}}
\def\Sp{{\rm Sp}}

\def\deg{{\rm deg\,}}

\def\Aut{{\rm Aut}}

\def\Hom{{\rm Hom}}

\def\Mod{{\rm Mod}}
\def\Cent{{\rm Cent}}

\def\Ker{{\rm Ker\,}}
\def\Im{{\rm Im\,}}

\def\NSL_2{{\mathcal N SL_2}}


\def\lam{\lambda}            
                
\def\phi{\varphi}

\def\calA{{\mathcal A}}

\def\calG{{\mathcal G}}

\def\calI{{\mathcal I}}

\def\calK{{\mathcal K}}

\def\calM{{\mathcal M}}

\def\calT{{\mathcal T}}



\def\hbar{\bar h}




\def\dbC{{\mathbb C}}

\def\dbF{{\mathbb F}}

\def\dbN{{\mathbb N}}

\def\dbQ{{\mathbb Q}}
\def\dbR{{\mathbb R}}

\def\dbZ{{\mathbb Z}}

\def\dbn{{\mathbf n}}
\def\dbg{{\mathbf g}}

\begin{document}

\title{On finiteness properties of the Johnson filtrations}

\author{Mikhail Ershov}
\address{University of Virginia}
\email{ershov@virginia.edu}
\author{Sue He}
\address{University of Virginia}
\email{suzy.he@gmail.com}

\thanks{The work of the first author was partially supported by the NSF grant DMS-1201452.}

\begin{abstract} Let $\Gamma$ be either the automorphism group of the free group of rank $n\geq 4$ or the mapping class group of an orientable surface of genus $n\geq 12$ with at most $1$ boundary component, and let $G$ be either the subgroup of $\IA$-automorphisms or the Torelli subgroup
of $\Gamma$, respectively. For $N\in\dbN$ denote by $\gamma_N G$ the $N^{\rm th}$ term of the lower central series of $G$.
We prove that 
\vskip .12cm
\begin{itemize}
\item[(i)] any subgroup of $G$ containing $\gamma_2 G=[G,G]$ (in particular, the Johnson kernel in the mapping class group case) is finitely generated;
\item[(ii)] if $N=2$ or $n\geq 8N-4$ and $K$ is any subgroup of $G$ containing
$\gamma_N G$ (for instance, $K$ can be the $N^{\rm th}$ term of the Johnson filtration of $G$), then $G/[K,K]$ is nilpotent and hence the abelianization of $K$ is finitely generated;  
\item[(iii)] if $H$ is any finite index subgroup of $\Gamma$ containing $\gamma_N G$, with $N$ as in (ii), then $H$ has finite abelianization.
\end{itemize}
\end{abstract}
\maketitle

\section{Introduction}

\subsection{The Johnson filtrations}
Let $F_n$ denote the free group of rank $n\geq 2$,
and let $\IA_n\subset \Aut(F_n)$ denote the subgroup of automorphisms of $F_n$ which act as identity on the abelianization; equivalently,
$\IA_n$ is the kernel of the natural map $\Aut(F_n)\to \Aut(F_n^{ab})\cong \GL_n(\dbZ)$. More generally, for each $k\in\dbN$ define $\IA_n(k)$ to be the kernel of the natural map $\Aut(F_n)\to \Aut(F_n/\gamma_{k+1}F_n)$. The filtration $\IA_n=\IA_n(1)\supset \IA_n(2)\supset \ldots$ was first introduced and studied by Andreadakis in \cite{An}. It is easy to see that $\cap_{k=1}^{\infty}\IA_n(k)=\{1\}$ and
$\gamma_k \IA_n\subseteq \IA_n(k)$ for each $k$ (as usual, for a group $G$ and $k\in\dbN$ we denote by $\gamma_k G$ the $k^{\rm th}$ term of the lower central series of $G$ defined inductively by
$\gamma_1 G=G$ and $\gamma_{k+1}G=[\gamma_k G,G]$ for $k\geq 1$). Bachmuth~\cite{Ba} proved that $\gamma_2 \IA_n= \IA_n(2)$. 

The filtration $\{\IA_n(k)\}_{k\in\dbN}$ is often referred to as the {\it Johnson filtration} and owes its name
to the corresponding filtration in mapping class groups whose study was initiated by Johnson~\cite{JoS}.
Let $\Sigma_{g}^1$ be an orientable surface of genus $g\geq 2$ with $1$ boundary component and
$\Mod_{g}^1$ its mapping class group. The fundamental group $\pi=\pi(\Sigma_{g}^1)$ is free of rank $2g$,
and for each $k\in\dbN$ there is a natural homomorphism $\Mod_{g}^1\to \Aut(\pi/\gamma_{k+1}\pi)$. Denote the kernel of this homomorphism by $\calI_g^1(k)$. The subgroups $\calI_g^1=\calI_g^1(1)$ and $\calK_g^1=\calI_g^1(2)$ are known as the {\it Torelli subgroup} and the {\it Johnson kernel}, respectively, and the filtration $\{\calI_g^1(k)\}_{k\in\dbN}$ is called the {\it Johnson filtration}
of $\Mod_{g}^1$. Again one has $\cap_{k=1}^{\infty}\calI_g^1(k)=\{1\}$ and
$\gamma_k \calI_g^1 \subseteq \calI_g^1(k)$ for each $k$, but this time the inclusion is known to be strict already for $k=2$.
The mapping class group $\Mod_g$ of a closed orientable surface of genus $g$
is a quotient of $\Mod_{g}^1$, and the Johnson filtration $\{\calI_g(k)\}_{k\in\dbN}$ of $\Mod_g$ can be defined as the image of the filtration
$\{\calI_g^1(k)\}_{k\in\dbN}$ of $\Mod_g^1$.

\subsection{Finite generation}
The first result on finiteness properties of the Johnson filtrations is a classical theorem of Magnus~\cite{Ma} from 1935 which asserts that $\IA_n=\IA_n(1)$ is finitely generated for all $n\geq 2$. In his celebrated 1983 paper, Johnson~\cite{Jo3} proved that the Torelli group $\calI_g^1$ (and hence also its quotient $\calI_g$) is finitely generated for $g\geq 3$. In 1986, McCullough and Miller~\cite{MM} proved that $\calI_2$ (and hence also $\calI_2^1$) is infinitely generated, thereby completely settling the finite generation question for the first terms. 

The question whether the Johnson kernel $\calI_g(2)$ is finitely generated for $g\geq 3$ has received a lot of attention since 1990s and was explicitly asked by Morita \cite[Problem~2.2(i)]{Mo}. For some time it was generally expected that the Johnson kernel $\calI_g(2)$ as well as its counterpart $\IA_n(2)$ are infinitely generated and moreover  have infinite-dimensional first rational homology. The latter was refuted by the recent breakthrough results of Dimca and Papadima~\cite{DP} and Papadima and Suciu~\cite{PS} who established that  $H_1(\calI_g(2),\dbQ)$ for $g\geq 4$ and
$H_1(\IA_n(2),\dbQ)$ for $n\geq 5$ are finite-dimensional. Our first main theorem strengthens these results by showing that the second terms of the Johnson filtrations are finitely generated in sufficiently large rank. 

\begin{Theorem} 
\label{thm:supermain}
Let $G$ be either $\IA_n$ for $n\geq 4$ or $\calI_g^1$ for $g\geq 12$. If $K$ is any subgroup of $G$ containing $[G,G]$, then $K$ is finitely generated. 
\end{Theorem}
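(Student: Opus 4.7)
The plan is to first reduce the problem to showing that $[G,G]$ itself is finitely generated, and then to leverage part (ii) of the main theorem to establish this.

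Since $K\supseteq[G,G]$, the subgroup $K$ is normal in $G$ and the quotient $K/[G,G]$ embeds into $G/[G,G]$. The group $G$ is finitely generated --- by Magnus's theorem when $G=\IA_n$ and by Johnson's theorem when $G=\calI_g^1$ --- so $G/[G,G]$ is a finitely generated abelian group, and hence so is any subgroup, including $K/[G,G]$. Lifting finitely many generators of $K/[G,G]$ to elements of $K$ and combining these lifts with any finite generating set of $[G,G]$ produces a finite generating set of $K$; the theorem therefore reduces to showing that $[G,G]$ itself is finitely generated.

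To prove finite generation of $[G,G]$ one applies part (ii) of the main theorem with $N=2$ and $K=[G,G]$, which asserts that $G/[[G,G],[G,G]]$ is nilpotent. Since $G$ is finitely generated, this quotient is finitely generated nilpotent, hence polycyclic and Noetherian. Consequently $[G,G]/[[G,G],[G,G]]$ is finitely generated as a subgroup of a polycyclic group, which already gives finite generation of the abelianization of $[G,G]$. Letting $c$ denote the nilpotency class of $G/[[G,G],[G,G]]$, one has $\gamma_{c+1}G\subseteq[[G,G],[G,G]]$, so that $[G,G]/\gamma_{c+1}G$ is also a subgroup of the polycyclic group $G/\gamma_{c+1}G$ and is finitely generated. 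It therefore suffices to prove that $\gamma_{c+1}G$ is finitely generated.

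The main obstacle is precisely this last step: controlling the lower central series term $\gamma_{c+1}G$ cannot be accomplished by a naive iteration of the lower central series, which would trigger an infinite regress (part (ii) applied to successive derived subgroups would require $n\geq 8N-4$ for unbounded $N$). The natural plan is to exploit the conjugation action of the ambient group $\Gamma=\Aut(F_n)$ or $\Mod_g^1$ on $[G,G]$, together with the structure of $G/[G,G]$ as a module over the arithmetic group $\Gamma/G\cong\GL_n(\dbZ)$ or $\Sp_{2g}(\dbZ)$; the hypotheses $n\geq 4$ and $g\geq 12$ presumably reflect exactly the range in which this module-theoretic structure --- and the resulting control of $[G,G]$ via finitely many $\Gamma$-conjugates of commutators of Magnus or Johnson generators of $G$ --- suffices to close the argument.
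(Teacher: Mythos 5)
Your first reduction is correct: since $G$ is finitely generated (Magnus / Johnson) and $K/[G,G]$ embeds in the finitely generated abelian group $G/[G,G]$, it suffices to prove that $[G,G]$ is finitely generated. But your attempt to do that has a genuine gap, and you essentially admit it yourself. Applying Corollary~\ref{cor:fgab} with $N=2$ gives that $G/[[G,G],[G,G]]$ is nilpotent of some class $c$, so $\gamma_{c+1}G\subseteq[[G,G],[G,G]]$ and $[G,G]/\gamma_{c+1}G$ is finitely generated. That correctly reduces the problem to finite generation of $\gamma_{c+1}G$ --- but this is a \emph{strictly harder} statement than the one you set out to prove: it concerns a deeper term of the lower central series, and Corollary~\ref{cor:fgab} only ever controls abelianizations, never the subgroup itself. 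The final paragraph (``exploit the conjugation action of $\Gamma$ on $[G,G]$ together with the $\GL_n(\dbZ)$-module structure'') is a programmatic remark, not an argument; nothing in it tells you why that infinite regress terminates. As the paper notes in \S\ref{sec:closingcomments}, finite generation of $\gamma_N G$ for $N\geq 2$ is the subject of the sequel~\cite{CEP}, obtained by different methods.

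The paper's actual proof of Theorem~\ref{thm:supermain} takes a completely different route, designed precisely to avoid this regress. By the Bieri--Neumann--Strebel criterion (Theorem~\ref{BNS}), a subgroup $K\supseteq[G,G]$ is finitely generated if and only if $[\chi]\in\Sigma(G)$ for every nonzero character $\chi:G\to\dbR$ vanishing on $K$; in particular $[G,G]$ is finitely generated if and only if $\Sigma(G)=S(G)$. So it suffices to prove Theorem~\ref{OK}, $\Sigma(G)=S(G)$, and this is done via Proposition~\ref{cent_exist}(b): for every nonzero character $\rho:G\to\dbR$ one exhibits a $\rho$-CG sequence, using Brown's $\dbR$-tree characterization of $\Sigma(G)$. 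The crucial point --- and the reason BNS is the right tool here --- is that this condition only involves homomorphisms $G\to\dbR$, hence only data about $G^{ab}$, and never requires any finiteness control on $\gamma_{c+1}G$ or any other deeper term of the lower central series. Your proposal identifies the correct target ($[G,G]$ finitely generated) but lacks the key idea that makes it provable without an unbounded descent.
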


\begin{Remark}\rm Theorem~\ref{thm:supermain} is also true for $G=\calI_g$ since this group is a quotient of $\calI_g^1$.
\end{Remark}

We will prove Theorem~\ref{thm:supermain} by computing the \emph{BNS invariant} of the group $G$ in the theorem.
Given an arbitrary finitely generated group $G$, by a {\it character} of $G$ we will mean a homomorphism
$\chi:G\to\dbR$. Two characters $\chi$ and $\chi'$ will be considered equivalent if they are positive scalar multiples of each other.
The equivalence class of a character $\chi$ will be denoted by $[\chi]$, and the set of equivalence classes of nonzero characters will be denoted by $S(G)$. In \cite{BNS}, Bieri, Neumann and Strebel introduced a certain subset $\Sigma(G)$
of $S(G)$, now known as the {\it BNS invariant of $G$}. The BNS invariant of $G$ completely determines which subgroups of $G$ containing $[G,G]$ are finitely generated:

\begin{Theorem}[\cite{BNS}]
\label{BNS}
Let $G$ be a finitely generated group and let $N$ be a subgroup of $G$ containing $[G,G]$. Then $N$ is finitely generated
if and only if $\Sigma(G)$ contains $[\chi]$ for every non-trivial $\chi$ which vanishes on $N$. In particular, $[G,G]$ is finitely generated if and only if $\Sigma(G)=S(G)$.  
\end{Theorem}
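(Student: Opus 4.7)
The plan is to prove the theorem using the combinatorial Cayley-graph definition of $\Sigma(G)$ and a back-and-forth argument relating finite generation of $N$ to connectedness of half-space subgraphs. First I would fix a finite generating set $X$ of $G$, form the Cayley graph $\Gamma = \Cay(G,X)$, and for a character $\chi$ let $\Gamma_\chi$ denote the full subgraph of $\Gamma$ induced on $G_\chi := \{g \in G : \chi(g) \geq 0\}$; by definition $[\chi] \in \Sigma(G)$ iff $\Gamma_\chi$ is connected, and a preliminary check confirms that this depends only on $[\chi]$ and not on the choice of $X$. Since $N \supseteq [G,G]$, the quotient $G/N$ is abelian and every character vanishing on $N$ factors through it, so the set of equivalence classes of such characters forms a closed subsphere of $S(G)$.

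For the ``only if'' direction, assume $N$ is finitely generated and let $\chi \neq 0$ vanish on $N$. Pick a finite generating set $T$ of $N$ and a finite set $Y \subset G$ whose image generates the finitely generated abelian quotient $G/N$, so that $X = T^{\pm 1} \cup Y^{\pm 1}$ generates $G$. Since $T \subseteq \ker\chi$ and some $y \in Y$ has $\chi(y) > 0$, I would establish connectedness of $\Gamma_\chi$ by a lift-and-shift argument: given $g \in G_\chi$ and a word $w = x_1 \cdots x_m$ in $X$ representing $g$, prepend a sufficiently large power $y^n$ so that the entire translated path $y^n, y^n x_1, y^n x_1 x_2, \ldots, y^n g$ lies in $G_\chi$, then connect $y^n$ to $1$ along the $y$-axis (whose vertices all satisfy $\chi \geq 0$) and $y^n g$ back to $g$, using that left multiplication by elements of $T$ preserves $\chi$-levels.

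For the ``if'' direction, assume $[\chi] \in \Sigma(G)$ for every nonzero $\chi$ vanishing on $N$. The set $S_N$ of such classes is a compact subsphere of $S(G)$. For each $[\chi] \in S_N$ and each generator $x \in X$, connectedness of $\Gamma_\chi$ supplies a finite word $w_{x,\chi}$ in $X^{\pm 1}$ realizing an edge-path from $1$ to $x$ that stays inside $G_\chi$. Bounding the lengths of these words yields an open neighborhood of $[\chi]$ in $S_N$ on which the same finite collection of words remains admissible; extracting a finite subcover produces a single finite family of words that works simultaneously for all $[\chi] \in S_N$. A Reidemeister--Schreier style rewriting, exploiting that $N$ acts on the Cayley graph preserving $\chi$-level sets for every $\chi$ vanishing on $N$, then converts this finite geometric data into a finite generating set of $N$.

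The main obstacle will be the ``if'' direction, where one must pass from purely geometric connectedness data parameterized over the compact sphere $S_N$ to an algebraic finite generating set of $N$. This requires careful bookkeeping of how paths in $\Gamma_\chi$ deform as $\chi$ varies and how the monoid generated by the chosen lifts interacts with $N$ under the rewriting process. Compactness of $S_N$ is essential: connectedness of $\Gamma_\chi$ for a single $[\chi]$ does not suffice, which is precisely why the hypothesis must quantify over all directions in $S_N$ at once, and it is this compactness step that lets finitely many local pieces of path data be glued into a single finite generating set.
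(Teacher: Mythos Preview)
The paper does not prove Theorem~\ref{BNS}; it is quoted from \cite{BNS} as a black box and used only through its statement (specifically, to deduce Theorem~\ref{thm:supermain} from Theorem~\ref{OK}). So there is nothing in the paper to compare your proposal against.

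That said, your outline follows the standard Cayley-graph proof of the BNS criterion and is broadly correct in spirit. One concrete gap: in the ``only if'' direction, the step ``connect $y^n g$ back to $g$, using that left multiplication by elements of $T$ preserves $\chi$-levels'' does not work as written. Edges in $\Cay(G,X)$ come from \emph{right} multiplication by generators, so left multiplication by elements of $T$ (or by $y^{-1}$) does not produce edge-paths. The usual fix is to argue differently: first show that every $g\in G_\chi$ can be connected inside $G_\chi$ to some element of the submonoid generated by the $y_i$ with $\chi(y_i)>0$ (using that $G/N$ is abelian to rearrange the $Y$-part of a word for $g$ into a $\chi$-monotone prefix, then append a word in $T$ which stays at constant $\chi$-level), and then connect within that submonoid. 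Your ``lift by $y^n$'' idea is essentially this, but the return leg needs to be rewritten as right-multiplications.

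Your identification of the ``if'' direction as the hard part, and the strategy of using compactness of $S_N$ to extract a uniform finite family of connecting paths followed by a rewriting argument, is exactly the structure of the original BNS proof.
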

 
Thus, Theorem~\ref{thm:supermain} is a direct consequence of Theorem~\ref{BNS} and the following result:

\begin{Theorem}
\label{OK}
Let $G$ be as in Theorem~\ref{thm:supermain}. Then $\Sigma(G)=S(G)$.
\end{Theorem}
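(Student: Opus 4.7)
\textbf{Proof proposal for Theorem \ref{OK}.} The plan is to prove $\Sigma(G)=S(G)$ by combining openness and arithmetic equivariance of $\Sigma(G)$ with an explicit construction of enough characters in $\Sigma(G)$.

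\emph{Setup and equivariance.} First I would make explicit the $\Gamma$-action on $S(G)$: the ambient group $\Gamma$ (either $\Aut(F_n)$ or $\Mod_g^1$) normalizes $G$, so conjugation yields a homomorphism $\Gamma\to\Aut(G)$; since inner automorphisms of $G$ act trivially on characters, the induced action on $S(G)$ descends to $L:=\Gamma/G$, which is $\GL_n(\dbZ)$ or $\Sp_{2g}(\dbZ)$. The abelianization $G^{ab}\otimes\dbQ$ is an explicit rational representation of $L$: $H^*\otimes\Lambda^2 H$ for $G=\IA_n$ by the work of Andreadakis, Cohen--Pakianathan, Farb and Kawazumi, and $\Lambda^3 H$ for $G=\calI_g^1$ by Johnson, where $H$ denotes the relevant standard lattice. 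In particular $S(G)$ is a finite-dimensional sphere carrying a natural $L$-action, and by the foundational results of \cite{BNS}, $\Sigma(G)$ is an open, $\Aut(G)$-invariant (hence $L$-invariant) subset of $S(G)$.

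\emph{Constructing characters in $\Sigma(G)$.} The heart of the argument is to show that $\Sigma(G)$ contains at least one seed character with nonzero projection to every irreducible $L$-summand of $G^{ab}\otimes\dbR$. I would proceed from a classical finite generating set $\Omega$ of $G$ --- the Magnus generators $K_{ij},K_{ijk}$ for $\IA_n$, and a Johnson-- or Putman-style generating set (separating twists and bounding-pair maps) for $\calI_g^1$ --- and, for each well-chosen $\chi$, construct an element $t\in G$ with $\chi(t)>0$ such that every element of $\Omega$ can be rewritten as a product of conjugates $t^{i}wt^{-i}$ with $i\geq 0$ and $w$ drawn from a finite set of $\chi$-nonnegative elements. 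By the standard sufficient condition for membership in $\Sigma(G)$ (equivalently, by finite generation of $G$ over the submonoid $G_{\chi\geq 0}$), this forces $[\chi]\in\Sigma(G)$. I expect this step to be the main obstacle: it requires genuinely combinatorial manipulations of the chosen generators, and this is where the rank hypotheses $n\geq 4$ and $g\geq 12$ should enter, providing enough room for the necessary commutator identities.

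\emph{Propagation and conclusion.} Finally, I would invoke arithmetic equivariance to sweep out all of $S(G)$. For each irreducible summand of $G^{ab}\otimes\dbR$, every $L$-orbit on the corresponding unit sphere is dense --- a standard consequence of Borel's density theorem together with density of primitive rational rays and strong approximation for $L$. So, provided the previous step yields seed characters as described, $L\cdot\Sigma(G)$ is a dense open $L$-invariant subset of $S(G)$; since $\Sigma(G)^c$ is closed and $L$-invariant, if it contained any character $[\chi]$ then the $L$-orbit closure of $[\chi]$ would exhaust the sphere of an invariant subspace, contradicting the seeding established above once a short linear-algebra argument reduces the general case to an irreducible summand. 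Hence $\Sigma(G)^c=\emptyset$ and $\Sigma(G)=S(G)$.
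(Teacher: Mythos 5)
Your proposal and the paper share the broad instinct that the $\Gamma$-equivariance of $\Sigma(G)$ under the arithmetic quotient $L=\Gamma/G$ should do much of the work, and that the explicit description of $G^{ab}$ as a rational $L$-module is the key structural input. But the two arguments then diverge sharply, and your version has two genuine gaps.

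First, the claim that ``every $L$-orbit on the corresponding unit sphere is dense --- a standard consequence of Borel's density theorem together with density of primitive rational rays and strong approximation'' is not justified. Borel density gives Zariski density of $L$ in the ambient algebraic group, not topological density of $L$-orbits on spheres; strong approximation controls the adelic picture, not real directions. The one case where topological orbit density on the sphere is classical is the standard representation of $\SL_n(\dbZ)$ (Dani--Raghavan), but here the relevant modules are $H^*\otimes\Lambda^2 H$ for $\IA_n$ and (the free part of) $\Lambda^3 H$ for $\calI_g^1$, and no off-the-shelf minimality theorem applies. This step needs a real proof, and it is not clear one exists at the required generality.

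Second, even granting orbit density on the sphere of each irreducible summand, a single seed character with nonzero projection to every irreducible summand does not force $\Sigma(G)=S(G)$. In both cases $G^{ab}\otimes\dbR$ decomposes into two non-isomorphic irreducibles, so $S(G)$ contains proper $L$-invariant subspheres. A character $[\chi]$ lying on such a subsphere has its orbit closure confined to that subsphere; an open neighborhood of your seed (which lies off every subsphere) need not meet it. You would need a separate seed on each invariant subsphere, and, beyond that, an argument that orbit closures of ``mixed'' characters fill out the whole sphere rather than some invariant lamination --- a joint-density statement that does not follow from density on the factors.

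For contrast, the paper avoids dynamics entirely. It uses Brown's characterization of $\Sigma(G)$ via actions on $\dbR$-trees and introduces $\rho$-CG sequences: pairwise-commuting elements lying in $C_\rho(G)$, together with a generating set built from their centralizers, which forces any abelian tree action associated to $\rho$ to be trivial (Proposition~\ref{cent_exist}). The conjugation action of $\Gamma$ is used not to produce orbit density but to move a given $\rho$ to a ``generic'' position where the standard Magnus (resp.\ Church--Putman) generating set reorganizes into a $\rho$-CG sequence: explicitly for $\IA_n$ via Proposition~\ref{prop:keyIAn}, and for $\calI_g^1$ via the regular $\SL_n(\dbZ)$-module machinery culminating in Theorem~\ref{coh_main_new} and Proposition~\ref{prop:tech}. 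This handles every nonzero character at once and makes the rank bounds $n\geq 4$, $g\geq 12$ appear for concrete combinatorial reasons. Your ``seeding'' step, which you rightly flag as the main obstacle, is essentially where the paper's entire argument lives; the dynamical sweep you hope will finish the job does not close the gap.
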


The set $\Sigma(G)$ admits many different characterizations. In order to prove Theorem~\ref{OK} we will use the characterization in terms of actions on real trees due to Brown~\cite{Br} (see \S~\ref{sec:cent}).

\subsection{Vanishing of cohomology and abelianization of the Johnson filtrations}
\label{sec:vanish}

Our second main theorem proves the vanishing of the first cohomology of $\IA_n$ and $\calI_g^1$ with coefficients in a certain class of irreducible representations.

\begin{Theorem}
\label{coh_main} Let $N\geq 2$ be an integer and $G$ a group such that one of the following holds:  
\begin{itemize}
\item[(a)] $N=2$ and $G=\IA_n$ where $n\geq 4$;
\item[(b)] $G=\IA_n$ where $n\geq 8N-4$;
\item[(c)] $G=\calI_g^1$ where $g\geq 8N-4$.
\end{itemize}
Let $\rho$ be a non-trivial irreducible (possibly infinite-dimensional) representation of $G$ over an arbitrary field $F$ such that
$\Ker\rho\supseteq \gamma_N G$ (equivalently, $\rho(G)$ is nilpotent of class less than $N$). Then
$$H^1(G,\rho)=0.$$
\end{Theorem}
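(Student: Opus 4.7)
The plan is to analyze $H^1(G,V)$ via the Lyndon--Hochschild--Serre spectral sequence for the extension
$1 \to \gamma_N G \to G \to \bar G \to 1$,
where $\bar G := G/\gamma_N G$ is finitely generated nilpotent of class at most $N-1$. Since $\gamma_N G\subseteq\Ker\rho$, this subgroup acts trivially on $V$, so $V^{\gamma_N G}=V$ and $H^1(\gamma_N G,V)=\Hom(\gamma_N G^{ab},V)$. The five-term exact sequence reads
$$0 \to H^1(\bar G, V) \to H^1(G, V) \to \Hom_{\bar G}(\gamma_N G^{ab}, V) \to H^2(\bar G, V),$$
so it is enough to establish the two vanishing statements $H^1(\bar G, V)=0$ and $\Hom_{\bar G}(\gamma_N G^{ab},V)=0$.

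For the first vanishing I would argue by induction on the nilpotency class of the image $\bar G/\Ker\bar\rho$, exploiting the fact that any non-trivial nilpotent group has non-trivial center. Choosing $z\in\bar G$ whose image in $\bar G/\Ker\bar\rho$ is central and satisfies $\bar\rho(z)\ne 1$, irreducibility forces the $\bar G$-equivariant kernel and image of $\bar\rho(z)-1$ each to be $0$ or $V$, and non-triviality of the action makes the operator bijective. Reducing inductively to a cocycle $c:\bar G\to V$ which vanishes on $\Ker\bar\rho$, the relation $zg\equiv gz\pmod{\Ker\bar\rho}$ yields $(\bar\rho(z)-1)c(g)=(\bar\rho(g)-1)c(z)$ for every $g$, and then $v:=(\bar\rho(z)-1)^{-1}c(z)$ realizes $c$ as a coboundary.

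The core of the proof is the second vanishing, and a natural first step filters $\gamma_N G^{ab}$ by the $\bar G$-submodules $F^k:=$ image of $\gamma_{N+k}G$, which coincide with $I^k\cdot\gamma_N G^{ab}$ for $I\subset\dbZ[\bar G]$ the augmentation ideal. Each graded piece $F^k/F^{k+1}$ is a quotient of the Lie algebra layer $L_{N+k}=\gamma_{N+k}G/\gamma_{N+k+1}G$, on which $G$ acts trivially (since $[g,h]\in\gamma_{m+1}G$ for $g\in G$ and $h\in\gamma_m G$); hence $\Hom_{\bar G}(F^k/F^{k+1},V)$ embeds in $\Hom(F^k/F^{k+1},V^{\bar G})$, which is zero since the non-trivial irreducible $V$ has no $\bar G$-fixed vectors.

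The main obstacle is that the graded vanishing does not automatically descend to $\gamma_N G^{ab}$ itself: because $\bar G$ acts non-trivially on $V$ we have $IV=V$, hence $\phi(F^k)=V$ for every $k$ whenever $\phi$ is surjective, and the filtration argument by itself is insufficient. Bridging this gap requires structural information specific to $\gamma_N G^{ab}$ for the groups at hand: the outer action of $\Gamma/G=\GL_n(\dbZ)$ or $\Sp_{2g}(\dbZ)$ makes $\gamma_N G^{ab}$, up to finite index, a finitely generated module inside an explicit subquotient of the free Lie algebra on $H=F_n^{ab}$ or $H_1(\Sigma_g)$, and the representation theory of these arithmetic groups should rule out $V$ arising as an irreducible quotient with nilpotent $\bar G$-action of class $<N$, once the rank is sufficiently large. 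The hypothesis $n\ge 8N-4$ provides the room needed for this arithmetic rigidity in parts (b) and (c), while for $N=2$ Bachmuth's identity $\gamma_2\IA_n=\IA_n(2)$ together with the Cohen--Pakianathan--Farb--Kawazumi description $\IA_n^{ab}\cong H^*\otimes\Lambda^2 H$ allow a direct verification requiring only $n\ge 4$.
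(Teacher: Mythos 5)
Your reduction via the five-term exact sequence for $1\to\gamma_N G\to G\to\bar G\to 1$ is clean and, combined with Robinson's vanishing theorem for nilpotent groups (\cite{Rb}), correctly shows that $H^1(\bar G,V)=0$ and even $H^2(\bar G,V)=0$, so that $H^1(G,V)\cong\Hom_{\bar G}(\gamma_N G^{ab},V)$. Your direct argument for the first vanishing via a central element $z$ with $\bar\rho(z)\ne 1$ is essentially sound (it is the same Schur's-lemma mechanism as Lemma~\ref{lem1} of the paper). The first vanishing is not where the difficulty lies.

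The genuine gap is exactly the one you flag: the second vanishing, $\Hom_{\bar G}(\gamma_N G^{ab},V)=0$. You correctly observe that the filtration by images of $\gamma_{N+k}G$ does not finish the argument, since $\Omega V = V$ when the action is nontrivial (here $\Omega$ is the augmentation ideal). But the proposed repair --- ``the representation theory of $\GL_n(\dbZ)$ or $\Sp_{2g}(\dbZ)$ should rule out $V$ arising as an irreducible quotient'' --- does not work as stated, for a structural reason: $V$ is an irreducible representation of $G$ only, not of $\Gamma$ or of any arithmetic group. The outer $\GL_n(\dbZ)$- or $\Sp_{2g}(\dbZ)$-action lives on $\gamma_N G^{ab}$ but has no a priori interaction with the $\bar G$-module $V$, so one cannot simply invoke facts about arithmetic-group representations to constrain $\Hom_{\bar G}(\gamma_N G^{ab},V)$. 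Worse, by Theorem~\ref{kernel_main} and Theorem~\ref{thm:16ext}(b), the statement $\Hom_{\bar G}(\gamma_N G^{ab},V)=0$ for all such $V$ is \emph{equivalent} to $\gamma_N G^{ab}$ being annihilated by a power of $\Omega$, which is exactly Corollary~\ref{cor:fgab}(ii) --- a consequence of, not a shortcut to, Theorem~\ref{coh_main}. So this reduction, while correct, does not make progress without the paper's main input.

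The paper's actual route is entirely different. It never passes through the spectral sequence for $\gamma_N G$. Instead it works directly with cocycles on $G$: Proposition~\ref{cent_exist}(a) shows that $H^1(G,\rho)=0$ once $G$ admits a $\rho$-centralizing generating sequence, and the bulk of the paper (Sections 3--7, in particular the regular $\SL_n(\dbZ)$-module machinery and Theorem~\ref{coh_main_new}) is devoted to constructing such a sequence. The arithmetic group enters not through its representation theory but through the conjugation trick in Claim~\ref{rhoa}: one replaces $\rho$ by $\rho_a(x)=\rho(a^{-1}xa)$ for suitable $a\in\Gamma$ so that $\Ker\rho_a$ avoids certain standard generators, and then commutation relations among those generators propagate the cocycle vanishing. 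That genericity step, carried out via the combinatorial Proposition~\ref{prop:tech} on regular modules, is precisely the missing ingredient your sketch would need, and it is not a ``rigidity'' fact about arithmetic representations but a statement about moving a kernel of $\rho$ into a generic position under conjugation.
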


\begin{Remark}\rm Theorem~\ref{coh_main} remains true if $G$ is replaced by any of its quotients. Indeed, if $K$ is a normal subgroup of a group $G$
and $\rho$ is any representation of $G/K$, then $H^1(G/K,\rho)$ embeds in $H^1(G,\rho)$, e.g., by the inflation-restriction sequence.
\end{Remark}

Our first application of Theorem~\ref{coh_main} yields new structural information about the abelianization of subgroups of $G$ containing $\gamma_N G$, with $G$ and $N$ as in Theorem~\ref{coh_main}. In particular, the result applies to the $N^{\rm th}$ term of the Johnson filtration of $G$.

\begin{Corollary}
\label{cor:fgab} Let $G$ and $N$ be as in Theorem~\ref{coh_main}, and let $K$ be any subgroup of $G$ containing $\gamma_N G$. Then
the following equivalent conditions hold:
\begin{itemize}
\item[(i)] $G/[K,K]$ is nilpotent (equivalently, $\gamma_m G\subseteq [K,K]$ for some $m$)
\item[(ii)] $K/[K,K]\cong H_1(K,\dbZ)$ considered as a $G/K$-module is annihilated by some power
of the augmentation ideal of $\dbZ[G/K]$.
\end{itemize}
In particular, $K/[K,K]$ is finitely generated.
\end{Corollary}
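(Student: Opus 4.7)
The equivalence (i) $\Leftrightarrow$ (ii) and the finite-generation conclusion will follow formally once (i) is in hand. Since $\gamma_N G \subseteq K$, the quotient $G/K$ is a quotient of $G/\gamma_N G$, hence finitely generated nilpotent. The extension $1 \to V \to G/[K,K] \to G/K \to 1$ with $V = K/[K,K]$ has abelian kernel and nilpotent quotient, so it is nilpotent if and only if the induced $G/K$-action on $V$ is nilpotent, i.e., if and only if some power of the augmentation ideal $I$ of $\dbZ[G/K]$ annihilates $V$. This gives (i) $\Leftrightarrow$ (ii). The coinvariants $V/IV = K/[G,K]$ are finitely generated, being a quotient of $K/\gamma_{N+1}G$, which embeds in the finitely generated nilpotent group $G/\gamma_{N+1}G$; under (ii), $V$ has a finite filtration with finitely generated successive quotients, so $V$ itself is finitely generated.

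The heart of the proof is (i), which I would derive from Theorem~\ref{coh_main} by contradiction. Suppose $\rho$ is a nontrivial irreducible $F[G/K]$-module arising as a quotient of $V \otimes F$ for some field $F$. Viewed as an $F[G]$-module, $\rho$ has $\gamma_N G$ in its kernel, and in fact $K$ in its kernel (since $K$ acts trivially on $V$, the image of $V \to \rho$ lies in the $G$-submodule $\rho^K$ of the simple module $\rho$, forcing $\rho^K = \rho$). Theorem~\ref{coh_main} gives $H^1(G,\rho) = 0$, and the five-term exact sequence of the Lyndon--Hochschild--Serre spectral sequence for $1 \to K \to G \to G/K \to 1$ with coefficients in $\rho$ reads
\[
0 \to H^1(G/K,\rho) \to H^1(G,\rho) \to \Hom_{G/K}(V,\rho) \xrightarrow{\,d_2\,} H^2(G/K,\rho),
\]
so $\Hom_{G/K}(V,\rho)$ embeds into $H^2(G/K,\rho)$ via $f \mapsto f_*(c)$, where $c \in H^2(G/K,V)$ is the class of the extension $1 \to V \to G/[K,K] \to G/K \to 1$.

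The principal obstacle is to conclude $\Hom_{G/K}(V,\rho) = 0$, contradicting the existence of the hypothesized simple quotient $\rho$. Two possible strategies: (a) construct directly a crossed homomorphism $d\colon G \to \rho$ whose restriction to $K$ recovers the composition $K \twoheadrightarrow V \xrightarrow{f} \rho$; since $H^1(G,\rho) = 0$, such a $d$ must be inner, and as $K$ acts trivially on $\rho$ this forces $f|_K = 0$ and hence $f = 0$---but constructing $d$ is tantamount to showing $f_*(c) = 0$, which is the hard step; or (b) obtain additional vanishing of $H^2(G/K,\rho)$ using the nilpotent structure of $G/K$, perhaps through a secondary application of Theorem~\ref{coh_main} via inflation--restriction for $G \to G/K$. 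Converting the $H^1$-vanishing of Theorem~\ref{coh_main} into the required $\Hom$-vanishing is the main technical content I would need to develop.
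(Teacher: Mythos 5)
Your skeleton is essentially the paper's: Theorem~\ref{coh_main} gives $H^1(G,M)=0$ for the relevant irreducible $M$, and the five-term inflation--restriction sequence for $1\to K\to G\to Q\to 1$ then connects this to $\Hom_Q(V,M)$ where $V=K^{ab}$. You correctly sense that something is unfinished, and in fact two ingredients are missing, both specific results about nilpotent and polycyclic groups rather than anything that can be squeezed out of Theorem~\ref{coh_main} alone.

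The gap you flag is the vanishing of $H^2(Q,M)$, and it does \emph{not} come from a secondary application of Theorem~\ref{coh_main}: it is Robinson's theorem (Theorem~\ref{thm:Robinson}, \cite{Rb}), which says that for $Q$ nilpotent and $M$ a Noetherian $Q$-module with $M_Q=0$ (or Artinian with $M^Q=0$), all $H^i(Q,M)$ and $H_i(Q,M)$ vanish for $i>0$. The $M$ one works with is finite, simple and non-trivial, hence both Artinian and Noetherian with vanishing invariants and coinvariants, so Robinson kills $H^1(Q,M)$ and $H^2(Q,M)$; the five-term sequence then forces $0=H^1(G,M)\cong H^1(K,M)^Q\cong\Hom_Q(V,M)\neq 0$, the desired contradiction. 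The ingredient you do not mention at all is Roseblade's theorem (Theorem~\ref{thm:Roseblade}, \cite{Ro}), which enters twice: part~(a) guarantees that every simple $\dbZ[Q]$-module for polycyclic $Q$ is finite over a prime field, so a suitable non-trivial $M$ to which both Theorem~\ref{coh_main} and Robinson apply actually exists; part~(b) is what converts ``$\Omega$ acts trivially on every simple quotient of $V$'' back into ``$\Omega^m V=0$'', which is otherwise not automatic. Roseblade~(b) also requires that $V=K^{ab}$ be finitely generated as a $\dbZ[Q]$-module, and this must be proved independently (the paper does so in Theorem~\ref{thm:16ext}(a), by showing $\gamma_N G/[\gamma_N G,\gamma_N G]$ is generated over $\dbZ[G]$ by images of length-$N$ commutators in a finite generating set); your derivation of finite generation from condition~(ii) cannot serve here, since (ii) is exactly what is being established. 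Your treatment of the equivalence (i)$\Leftrightarrow$(ii) is correct and matches the paper's; for the final finite-generation assertion the paper's route is shorter: once $G/[K,K]$ is known to be nilpotent it is a finitely generated nilpotent group, and subgroups of such groups are finitely generated.
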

\begin{Remark}\rm The equivalence of conditions (i) and (ii) above will be explained in the proof of Theorem~\ref{kernel_main}. The last assertion
of Corollary~\ref{cor:fgab} holds since a group $G$ in Corollary~\ref{cor:fgab} is finitely generated (hence so is $G/[K,K]$) and a subgroup of a finitely generated nilpotent group is finitely generated.
\end{Remark}
When $N=2$, Corollary~\ref{cor:fgab} asserts that if $G=\IA_n$ with $n\geq 4$ or $G=\calI_g^1$ with $g\geq 12$, then
any metabelian quotient of $G$ is nilpotent, which we find quite surprising.
\vskip .1cm

Corollary~\ref{cor:fgab} is a direct consequence of Theorem~\ref{coh_main} and the following general result which will be proved in \S~\ref{sec:abel}:

\begin{Theorem} 
\label{kernel_main}
Let $G$ be a finitely generated group, and let $K$ be a normal subgroup of $G$ such that $Q=G/K$ is nilpotent.
Assume that for any finite field $F$ of prime order and any non-trivial finite-dimensional irreducible representation $\rho$
of $G$ over $F$ with $\Ker\rho\supseteq K$ we have $H^1(G,\rho)=0$. Then the quotient $G/[K,K]$ is nilpotent or, equivalently,
$K/[K,K]$ is annihilated by some power of the augmentation ideal of $\dbZ[Q]$.
\end{Theorem}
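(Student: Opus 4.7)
The plan is to reduce to the case where $K$ is abelian, use the Lyndon-Hochschild-Serre five-term sequence together with the vanishing $H^i(Q,V)=0$ for non-trivial simple $\dbF_p[Q]$-modules $V$, and then invoke a structural result for finitely generated $\dbF_p[Q]$-modules.

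First I would pass to the quotient $\bar G = G/[K,K]$, with corresponding abelian kernel $\bar K = K/[K,K]$ and unchanged quotient $Q$. Inflation-restriction gives $H^1(\bar G,\rho)\hookrightarrow H^1(G,\rho)=0$ whenever $[K,K]\subseteq\Ker\rho$, so the hypothesis transfers to $\bar G$. The goal becomes showing $\bar G$ is nilpotent; equivalently, the $\dbZ[Q]$-module $\bar K$ is annihilated by some power of the augmentation ideal $I\subset\dbZ[Q]$. The equivalence asserted in the statement follows from the commutator computation $\gamma_n\bar G\subseteq I^{n-c-1}\bar K$ for $n>c$, where $c$ is the nilpotency class of $Q$, together with the converse $I^m\bar K=0\Rightarrow\gamma_{c+m+1}\bar G=1$. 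Because $Q$ is finitely presented, $\bar K$ is finitely generated as a $\dbZ[Q]$-module.

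The key cohomological input is the vanishing $H^i(Q,V)=0$ for all $i\geq 0$ whenever $V$ is a non-trivial simple $\dbF_p[Q]$-module. I would prove this by induction on the nilpotency class of $Q$ via the LHS for $1\to Z\to Q\to Q/Z\to 1$ with $Z=Z(Q)$: if $Z$ acts non-trivially on $V$, then the Koszul complex computing $H^*(Z,V)$ is already exact, whereas if $Z$ acts trivially then $V$ factors through $Q/Z$ and induction applies. Plugging $H^1(Q,V)=H^2(Q,V)=0$ into the 5-term sequence
\[
0\to H^1(Q,V)\to H^1(\bar G,V)\to \Hom_Q(\bar K,V)\xrightarrow{d_2} H^2(Q,V)
\]
together with the hypothesis $H^1(\bar G,V)=0$ forces $\Hom_Q(\bar K,V)=0$: that is, $\bar K\otimes\dbF_p$ has no non-trivial simple $\dbF_p[Q]$-quotient, for every prime $p$.

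To conclude I would show that any finitely generated $\dbF_p[Q]$-module $M$ with only trivial simple quotients is $I$-nilpotent. Letting $M^{\mathrm{nil}}$ be the maximal $I$-nilpotent submodule and $N=M/M^{\mathrm{nil}}$, the quotient $N$ has no $I$-nilpotent elements and still has only trivial simple quotients. Iteratively choosing maximal submodules produces a chain $N=N_0\supsetneq N_1\supsetneq N_2\supsetneq\cdots$ with each $N_i/N_{i+1}\cong\dbF_p$; each $N_i$ in turn has only trivial simple quotients, since a hypothetical non-trivial simple quotient $W$ of $N_i$ would give a short exact sequence $0\to W\to N_{i-1}/\ker\to \dbF_p\to 0$ that splits by $\mathrm{Ext}^1_Q(\dbF_p,W)=H^1(Q,W)=0$, exhibiting $W$ as a quotient of $N_{i-1}$ and contradicting the inductive step. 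The main obstacle is showing this chain terminates at $0$, which would force $I^kN=0$ and hence $N=0$ by the no-$I$-nilpotent-elements property. This is a Roseblade-Jategaonkar-type structural statement: a finitely generated $\dbF_p[Q]$-module supported only at the augmentation ideal has finite composition length (in the abelian case, a consequence of the Nullstellensatz for the Jacobson ring $\dbF_p[Q^{\mathrm{ab}}]$). Once termination is established, $\bar K\otimes\dbF_p$ is $I$-nilpotent for every prime $p$; a spreading-out argument (any non-trivial simple $\dbQ[Q]$-quotient of $\bar K\otimes\dbQ$ would yield a non-trivial simple $\dbF_p[Q]$-quotient of $\bar K\otimes\dbF_p$ for infinitely many $p$) shows $\bar K\otimes\dbQ$ is also $I$-nilpotent, so $I^{n_0}\bar K$ is bounded torsion, and handling the finitely many primes dividing the torsion exponent separately yields $I^N\bar K=0$.
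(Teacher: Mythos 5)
Your high-level strategy does match the paper's: reduce to showing that $K^{ab}$, viewed as a $\dbZ[Q]$-module, is annihilated by a power of the augmentation ideal $\Omega$; use the five-term inflation--restriction sequence together with the vanishing $H^1(Q,M)=H^2(Q,M)=0$ for non-trivial simple modules $M$ to deduce $\Hom_Q(K^{ab},M)=0$; and then invoke a structural theorem to pass from ``every simple quotient is trivial'' to $\Omega$-nilpotence. Your inductive proof of $H^i(Q,M)=0$ via the LHS sequence for the center is a legitimate alternative to the paper's citation of Robinson's theorem, and the initial reduction to abelian $K$ is harmless.

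The genuine gap is exactly where you write ``the main obstacle is showing this chain terminates at $0$.'' That termination is the content of Roseblade's Theorem~B (the paper's Theorem~\ref{thm:Roseblade}(b)): for a finitely generated $\dbZ[Q]$-module $V$ over finitely generated nilpotent $Q$, if $\Omega M=0$ for every simple quotient $M$, then $\Omega^N V=0$. Your descending chain of finitely generated submodules with $\dbF_p$-factors need not stabilize over a Noetherian ring, and gesturing at ``a Roseblade--Jategaonkar-type structural statement'' does not fill the hole. You also create an avoidable secondary difficulty by working over $\dbF_p[Q]$ one prime at a time: even granted $\Omega$-nilpotence of $\bar K\otimes\dbF_p$ for each $p$, you need a uniform bound on the nilpotency exponent across primes before the spreading-out step can conclude that $\Omega^{N}\bar K$ is torsion of bounded exponent, and that bound is not supplied. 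Both problems vanish if you use Roseblade's theorems as stated in the paper: Theorem~\ref{thm:Roseblade}(a) guarantees that every simple $\dbZ[Q]$-module is already a finite $\dbF_p[Q]$-module, so your five-term-sequence computation shows $\Hom_Q(\bar K,M)=0$ for \emph{all} non-trivial simple $\dbZ[Q]$-modules $M$, and Theorem~\ref{thm:Roseblade}(b) then gives $\Omega^N\bar K=0$ directly over $\dbZ$, with no prime-by-prime bookkeeping.
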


Theorem~\ref{coh_main} and Corollary~\ref{cor:fgab} generalize two known results (with the extra restriction $g\geq 12$ in the mapping class group case) discussed below. Both these results deal with the case $N=2$ and were originally obtained by rather different methods.

If $\rho$ is a one-dimensional representation of a group $G$, we always have $\Ker\rho\supseteq [G,G]=\gamma_2 G$.
Thus Theorem~\ref{coh_main}(a) implies the following: for any field $F$ and any $n\geq 4$ the set 
$$\{\rho\in \Hom(\IA_n,F^{\times}): H^1(\IA_n,F_{\rho})\neq 0\}$$ has just one element (the trivial homomorphism)\footnote{The trivial homomorphism does belong to this set since $\IA_n$ is a finitely generated group with infinite abelianization 
(see \S~\ref{sec:ianab}).}, where $F_{\rho}$ denotes $F$ with $g\in\IA_n$ acting on $F$ as multiplication by $\rho(g)$. 
In the case $F=\dbC$ this answers in the affirmative \cite[Question~10.6(2)]{PS}. In \cite[Thm~10.5]{PS} it was proved that the above set is finite.\footnote{The question in \cite{PS} is stated in terms of homology, not cohomology, but this does not matter since for any group $G$ and any representation $\rho$ of $G$ over a field there are isomorphisms $H^i(G,\rho^*)\cong H_i(G,\rho)^*$ 
for all $i\in\dbN$ where $\rho^*$ is the dual representation. This can be proved by the argument from \cite[Prop~7.1]{Br2}.
} 
\vskip .12cm
Corollary~\ref{cor:fgab} and Theorem~\ref{coh_main} in the case $G=\calI_g^1, N=2$ generalize a result of 
Dimca, Hain and Papadima~\cite[Thm~A]{DHP}. The latter asserts that $H_1(\calI_g(2),\dbQ)$ is annihilated by some power of the augmentation ideal of $\dbQ[\calI_g/\calI_g(2)]$ and $H^1(\calI_g,\dbC_{\rho})=0$ for any non-trivial $\rho:\calI_g\to\dbC^{\times}$ which vanishes on $\calI_g(2)$, in both cases for all $g\geq 4$. The proofs of both \cite[Thm~A]{DHP} and \cite[Thm~10.5]{PS} made an essential  use of ideas from algebraic geometry introduced in \cite{DP}.
\vskip .12cm

\subsection{Abelianization of finite index subgroups}
Our second application of Theorem~\ref{coh_main} is motivated by the following well-known question (see, e.g., \cite{BV,Iv,PW}):

\begin{Question}
\label{q:FAb}
Let $\Gamma=\Aut(F_n)$ or $\Mod_g^1$. If $H$ is a finite index subgroup of $\Gamma$, does $H$ always have finite abelianization? 
\end{Question}

The answer is known to be negative (for general $H$) for $n=2$ (since $\Aut(F_2)$ projects onto $\GL_2(\dbZ)$, which is a virtually free group), $n=3$ (see \cite{GL} and \cite{Mc}) and $g=2$ (see \cite{Mc1}). For $n\geq 4$ and $g\geq 3$, Question~\ref{q:FAb} is open and appears to be difficult. 
\vskip .1cm
Here is a natural counterpart of Question~\ref{q:FAb} for finite index subgroups of $\IA_n$ or $\calI_g^1$:

\begin{Question}
\label{q:FAb2}
Let $G=\IA_n$ or $\calI_g^1$. If $H$ is a finite index subgroup of $G$, is it true that $\dim H^1(H,\dbC)=\dim H^1(G,\dbC)$? 
\end{Question}
\begin{Remark}\rm If $H$ has finite index in $G$, the restriction map $H^1(G,\dbC)\to H^1(H,\dbC)$ is always injective since the additive group of $\dbC$ is torsion-free. Hence $\dim H^1(H,\dbC)=\dim H^1(G,\dbC)$ if and only if the restriction map $H^1(G,\dbC)\to H^1(H,\dbC)$ is an isomorphism. \end{Remark}

As we will explain in \S~\ref{sec:fabproof}, a positive answer to Question~\ref{q:FAb2} would yield a positive answer to Question~\ref{q:FAb}.
Even if the answer to one or both of those questions ends up being negative (we expect this is the case with Question~\ref{q:FAb2}), it is still interesting to obtain positive results for some classes of subgroups $H$. Theorem~\ref{FAb_concrete} below establishes the latter for subgroups containing $\gamma_N G$ with $G$ and $N$ as in 
Theorem~\ref{coh_main}.

\begin{Theorem}
\label{FAb_concrete}
Let $G$ and $N$ be as in Theorem~\ref{coh_main}, 
and let $\Gamma=\Aut(F_n)$ if $G=\IA_n$
and $\Gamma=\Mod_g^1$ if $G=\calI_g^1$. Let $H$ be a subgroup of $\Gamma$ containing $\gamma_N G$.
The following hold:
\begin{itemize} 
\item [(1)] If $H$ is a finite index subgroup of $G$, then $\dim H^1(H,\dbC)=\dim H^1(G,\dbC)$.
\item [(2)] If $H$ is a finite index subgroup of $\Gamma$, then $H$ has finite abelianization.
\end{itemize}
\end{Theorem}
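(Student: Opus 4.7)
My plan is to prove (1) via Shapiro's lemma combined with Theorem~\ref{coh_main}, and then deduce (2) from (1) by means of the $5$-term inflation--restriction sequence for the extension $1 \to H \cap G \to H \to H/(H \cap G) \to 1$, together with standard facts about the arithmetic quotient $\Gamma/G$.

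For (1), I first invoke Shapiro's lemma to obtain $H^1(H, \dbC) \cong H^1(G, \Ind_H^G \dbC)$. Since $[G:H]<\infty$, the $G$-representation $V = \Ind_H^G \dbC$ is finite-dimensional, and the kernel $K$ of the permutation action of $G$ on $G/H$ is the normal core of $H$ in $G$, which contains $\gamma_N G$ because $\gamma_N G$ is normal in $G$ and contained in $H$. Consequently $G/K$ is simultaneously a subgroup of $\mathrm{Sym}(G/H)$ and a quotient of the finitely generated nilpotent group $G/\gamma_N G$, hence is a finite nilpotent group. By Maschke's theorem, $V$ is a semisimple $\dbC[G]$-module, and a direct computation yields $V^G = \dbC$ (the constant functions), so the trivial representation occurs in $V$ with multiplicity exactly one. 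Writing $V = \dbC \oplus W$ accordingly, I decompose $W$ into non-trivial irreducible $G$-summands and apply Theorem~\ref{coh_main} to each to conclude $H^1(G, W) = 0$, hence $H^1(H, \dbC) = H^1(G, V) = H^1(G, \dbC)$.

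For (2), set $G_H := H \cap G$. Since $G$ is normal in $\Gamma$, $G_H$ is a normal finite-index subgroup of $H$ containing $\gamma_N G$, and the quotient $\bar H := H/G_H$ embeds in $\Gamma/G$ as a subgroup of finite index. In our cases $\Gamma/G$ is $\GL_n(\dbZ)$ or $\Sp_{2g}(\dbZ)$. The $5$-term sequence in $\dbC$-coefficients gives
\[
0 \to H^1(\bar H, \dbC) \to H^1(H, \dbC) \to H^1(G_H, \dbC)^{\bar H}.
\]
The leftmost term vanishes because $\bar H$ sits with finite index in a group with Kazhdan's property~(T) ($\GL_n(\dbZ)$ with $n \geq 3$ or $\Sp_{2g}(\dbZ)$ with $g \geq 2$), so $\bar H$ has finite abelianization. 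By (1), restriction is an isomorphism $H^1(G, \dbC) \cong H^1(G_H, \dbC)$, and it is $\bar H$-equivariant by functoriality, so $H^1(G_H, \dbC)^{\bar H} = H^1(G, \dbC)^{\bar H}$. Since $\bar H$ is a finite-index subgroup of an arithmetic lattice, by Borel density it is Zariski-dense in the ambient complex algebraic group, so $H^1(G, \dbC)^{\bar H}$ coincides with $H^1(G, \dbC)^{\GL_n(\dbC)}$ or $H^1(G, \dbC)^{\Sp_{2g}(\dbC)}$. In both cases this vanishes: for $G = \IA_n$ the classical description of $\IA_n^{ab} \otimes \dbC$ as a non-trivial polynomial $\GL_n$-representation (Andreadakis, Magnus, Cohen--Pakianathan) has no invariants, and for $G = \calI_g^1$ Johnson's theorem identifies $H^1(\calI_g^1, \dbC)$ with $\Lambda^3 H_\dbC$, $H = H_1(\Sigma_g, \dbZ)$, which has no $\Sp_{2g}$-invariants in the relevant range. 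Therefore $H^1(H, \dbC) = 0$, and since $H$ is finitely generated, $H^{ab}$ is finite.

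The main technical point is the observation in (1) that the $G$-action on $\Ind_H^G \dbC$ factors through a \emph{finite} nilpotent quotient; this is what enables Maschke's theorem and ensures every non-trivial composition factor falls under Theorem~\ref{coh_main}. In (2) the only delicate step is the $\bar H$-equivariance of the restriction isomorphism from (1); the remaining inputs (property (T), Borel density, and the identification of $H^1(G, \dbC)$ as a $\Gamma/G$-module without invariants) are classical, so once they are assembled the deduction is essentially formal.
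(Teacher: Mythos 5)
Your proof of part (1) is essentially the same as the paper's: Shapiro's lemma, Maschke semisimplicity, and Theorem~\ref{coh_main} applied to the non-trivial summands. You handle the non-normal case cleanly by observing that the $G$-action on $\Ind_H^G\dbC$ factors through $G/\mathrm{core}(H)$ with $\mathrm{core}(H)\supseteq\gamma_N G$; the paper's Lemma~\ref{lem:Shapiro} is stated only for normal $H$ and leaves the reduction to the normal core implicit.

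Your proof of part (2) takes a genuinely different route. The paper first reduces, via Lemma~\ref{lem:LubZhuk}, to showing $H^1(\Gamma,V)=0$ for every irreducible finite-image complex representation $V$ of $\Gamma$ on which $\gamma_N G$ acts trivially, and then runs inflation--restriction for $1\to G\to\Gamma\to\Gamma/G\to 1$ with a case split on whether $V^G=0$ (use Theorem~\ref{coh_main} directly) or $V^G=V$ (use property $(T)$ and Zariski density). You instead run the five-term sequence for $1\to G_H\to H\to\bar H\to 1$ with trivial $\dbC$-coefficients, kill the left end via property $(T)$, and transport the right end to $H^1(G,\dbC)^{\bar H}$ via the $\bar H$-equivariant restriction isomorphism supplied by part~(1), finishing with Zariski density. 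This avoids both the reduction lemma and the case split, at the cost of feeding part~(1) into part~(2); note the paper remarks that its case~1 only needs Theorem~\ref{coh_main} for representations extendable to $\Gamma$, whereas your route uses the full strength of part~(1). The equivariance of the restriction map is correct (inner automorphisms act trivially on cohomology, so $H/G_H$ acts compatibly on $H^1(G,\dbC)$ and $H^1(G_H,\dbC)$), but it is worth a sentence since it is the hinge of your argument.

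One small inaccuracy in the density step: $\GL_n(\dbZ)$ is \emph{not} Zariski-dense in $\GL_n(\dbC)$ (its closure is $\{g:\det(g)^2=1\}$), and $\GL_n(\dbZ)$ is not a lattice in $\GL_n(\dbR)$, so invoking Borel density directly for the $\GL_n$ case is off. The fix is exactly what the paper does: replace $\bar H$ by $\bar H\cap\SL_n(\dbZ)$ (still finite index in $\SL_n(\dbZ)$, hence Zariski-dense in $\SL_n(\dbC)$) and observe that $H^1(G,\dbC)$, as a rational $\SL_n$-module, has no $\SL_n$-invariants. With that adjustment your argument goes through.
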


To the best of our knowledge, the only previous result dealing with Question~\ref{q:FAb2} is a theorem of
Putman~\cite[Thm B]{Pu2} who used a different method to establish Theorem~\ref{FAb_concrete}(1) for $G=\calI_g^1$, with $g\geq 3$, and $H$ a subgroup containing  the Johnson kernel $\calI_g^1(2)$. It is interesting to note that while we will deduce Theorem~\ref{FAb_concrete} from Theorem~\ref{coh_main}, an opposite implication was used in 
\cite{DHP} where  \cite[Thm B]{Pu2} was one of the tools in the proof of \cite[Thm~A]{DHP} stated in \S~\ref{sec:vanish}.

The prior work on Question~\ref{q:FAb} is more extensive and will be summarized in \S~\ref{sec:fabremarks}. 

\subsection{On the proofs of Theorems~\ref{OK} and \ref{coh_main}}

Let $G$ be a finitely generated group. We will show that there is a common approach to solving the following two problems:
\begin{itemize}
\item[(1)] prove that $H^1(G,\rho)=0$ for a given irreducible representation $\rho$ of $G$ and 
\item[(2)] prove that $[\rho]\in\Sigma(G)$ for a given character $\rho$ of $G$.
\end{itemize}
In both cases the desired result follows from the existence of a {\it $\rho$-centralizing generating} (abbreviated as $\rho$-CG)  sequence in $G$.
The notion of a $\rho$-CG sequence, introduced in \S~\ref{sec:cent}, will be defined with respect to an arbitrary homomorphism $\rho$ from $G$ to another group. Without giving a formal definition here, we just mention that a $\rho$-CG sequence exists whenever $G$ has finitely many pairwise commuting elements  $t_1,\ldots, t_k$ whose centralizers generate $G$
and such that $\rho(t_i)$ is non-trivial and central in $\rho(G)$ for each $i$ (see Lemma~\ref{centr_trivial}). Note that by our hypotheses we will always deal with $\rho$ such that $\rho(G)$ is nilpotent (and non-trivial) and hence always contains non-trivial central elements.

\vskip .12cm
In the following two paragraphs we assume that $G$ and $N$ are as in Theorem~\ref{coh_main} and $\rho$ is a non-trivial homomorphism from $G$ to another group  with $\Ker\rho\supseteq \gamma_N G$ (in particular, $\rho$ can be any non-trivial character of $G$). Our goal is to show that $G$ has a $\rho$-CG sequence. 

The group $\IA_n$ has a very simple finite generating set constructed by Magnus already in the 1930s (see \S~\ref{sec:ian} for its definition). A generating set for $\calI_g^1$ which is in many ways analogous to Magnus' generating set for $\IA_n$ was constructed in a recent paper of Church and Putman~\cite{CP}, which made use of an earlier work of Putman~\cite{Pu3} and the original work of Johnson~\cite{Jo3} (see \S~\ref{sec:torelli}.1). We will refer to these generating sets as {\it standard}. We will show that if $\rho$ is ``generic'' (in a suitable sense), then the existence of a $\rho$-CG sequence in $G$ follows from the basic relations between the standard generators (see Lemma~\ref{lem:key}). For a general $\rho$, we will use the observation that $G$ is a normal subgroup of $\Gamma$, with $\Gamma=\Aut(F_n)$ or $\Gamma=\Mod_g^1$, and precomposing $\rho$ by the conjugation by a fixed element of $\Gamma$ does not affect the existence of a $\rho$-CG sequence. 

Thus we are reduced to showing that an arbitrary $\rho$ as above can be conjugated to a generic one. In the case $G=\IA_n$ and $N=2$ this can be done quite explicitly (see \S~\ref{sec:ian}). In the other cases an explicit calculation may be possible, but would be cumbersome, so we take a more conceptual approach. First it is not hard to see that the relevant information about the conjugation action of $\Gamma$ on $G$ is captured by
the induced action of $\Gamma/G$ on $L(G)=\bigoplus\limits_{n=1}^{\infty}\gamma_n G/\gamma_{n+1}G$, the Lie algebra of $G$ with respect to the lower central series. Second, in both cases $(\Gamma,G)=(\Aut(F_n),\IA_n)$ and $(\Mod_n^1,\calI_n^1)$, the quotient $\Gamma/G$ contains a natural copy of $\SL_n(\dbZ)$ which therefore acts on $L(G)$. 

In \S~\ref{sec:maintech} we will establish a sufficient condition on an abstract group $G$
(assuming $\SL_n(\dbZ)$ acts on $L(G)$ as above) which guarantees that $G$ has a $\rho$-CG sequence for any non-trivial 
homomorphism $\rho$ from $G$ to another group such that $\rho(G)$ is nilpotent and has sufficiently small nilpotency class (see Theorem~\ref{coh_main_new}). One of the hypotheses in Theorem~\ref{coh_main_new} is that the abelianization $G^{ab}=G/[G,G]$ is a {\it regular $\SL_n(\dbZ)$-module}, a technical notion which will be introduced in \S~\ref{sec:reg}. While the proof of Theorem~\ref{coh_main_new} will deal with the action of $\SL_n(\dbZ)$ on the entire Lie algebra $L(G)$, its hypotheses only involve the action on $G^{ab}$, the degree $1$ component
of $L(G)$. This is very important since while the structure of $G^{ab}$ as an  $\SL_n(\dbZ)$-module is completely understood and easy to describe for $G=\IA_n$ or $\calI_n^1$, this is not the case with higher degree components of $L(G)$.

\subsection{Some closing comments}
\label{sec:closingcomments}
In a sequel paper \cite{CEP}, Theorem~\ref{thm:supermain} will be generalized to higher terms of the lower central series; more precisely, it will be shown that any subgroup of $\IA_n$ containing $\gamma_N \IA_n$ is finitely generated whenever $n\geq 4N-3$ and any subgroup of $\calI_n^1$ containing $\gamma_N \calI_n^1$ is finitely generated whenever $n\geq \max\{4,2N-1\}$. This result obviously implies the last assertion of Corollary~\ref{cor:fgab} (but not the full statement). The method from \cite{CEP} does not seem to be applicable to Theorem~\ref{coh_main}.

The question whether $[G,G]$ is finitely generated for $G=\IA_n$ or $\calI_n^1$ is now settled for all $n\neq 3$. The answer is positive for $n\geq 4$
(by Theorem~\ref{thm:supermain} and \cite{CEP}) and negative for $n=2$. The latter holds since $\IA_2$ is a free group of rank $2$ (see \S~\ref{sec:ian})
and $\calI_2$ is a free group of countable rank, as proved in \cite{Me}. It seems likely that the group $[\IA_3,\IA_3]$ is not finitely generated and moreover has infinite-dimensional first rational homology. Some evidence for this is provided by the fact that the assertions of Theorem~\ref{coh_main} and both parts of Theorem~\ref{FAb_concrete} are false for $G=\IA_3$ and $N=2$ (see Proposition~\ref{prop:ia3}).

\vskip .2cm
{\bf Organization}.  
In \S~\ref{sec:cent} we will introduce the notion of a $\rho$-CG sequence and reduce Theorems~\ref{OK} and \ref{coh_main} to the problem of existence of $\rho$-CG sequences for suitable $\rho$ (see Proposition~\ref{cent_exist}). In \S~\ref{sec:ian} we will prove that $G=\IA_n$, $n\geq 4$,
admits a $\rho$-CG sequence for any $\rho$ with $\rho(G)$ abelian (see Theorem~\ref{thm:degree2}). Theorem~\ref{thm:degree2} and Proposition~\ref{cent_exist} immediately imply Theorem~\ref{OK} (and hence Theorem~\ref{thm:supermain}) for $G=\IA_n$ and Theorem~\ref{coh_main}(a).

In \S~\ref{sec:ngps} we will introduce the notion of a {\it partially commuting $\dbn$-group},  where $\dbn=\{1,2,\ldots, n\}$, which formalizes the key properties of basic commutation relations in $\Aut(F_n)$ and $\Mod_g^1$. Then in \S~\ref{sec:proofsketch} we will describe a general method of constructing $\rho$-CG sequences in partially commuting $\dbn$-groups. In \S~\ref{sec:reg} we will introduce and study the notion of a regular $\SL_n(\dbZ)$-module. In \S~\ref{sec:maintech}
we will state and prove our main technical theorem (Theorem~\ref{coh_main_new}). Combined with Proposition~\ref{cent_exist}, Theorem~\ref{coh_main_new}
will be used to establish the remaining cases of Theorems~\ref{OK} and \ref{coh_main}. The fact that the hypotheses of Theorem~\ref{coh_main_new} hold for
$G=\IA_n$ is easy and will checked right after the statement of Theorem~\ref{coh_main_new}. The corresponding verification for $G=\calI_g^1$ is more involved and will be made in \S~\ref{sec:torelli}. 

In \S~\ref{sec:abel} we will prove Theorem~\ref{kernel_main}, thereby completing the proof of Corollary~\ref{cor:fgab}. Finally, in \S~\ref{sec:fab} we will prove Theorem~\ref{FAb_concrete} and also discuss other known results of the same kind. These two sections do not use any technical
machinery introduced in the paper and can be read right after the introduction.

\vskip .2cm
{\bf Acknowledgments}. We are extremely grateful to Andrei Rapinchuk who encouraged us to take on this project and suggested 
the general approach to the proof of Theorem~\ref{FAb_concrete}. We are also indebted to Andrei Jaikin who proposed a tremendous simplification of our original proof of Theorem~\ref{kernel_main}. We are very grateful to the anonymous referee who carefully read the paper and made a number of suggestions that helped improve the exposition. Finally, we would like to thank Thomas Church, Andrew Putman and Zezhou Zhang for useful comments on earlier versions of this paper.

\vskip .2cm
{\bf Notation}. When considering cohomology of a group $G$ with coefficients in some representation, we will sometimes use the notation $H^k(G,V)$ where $V$ is the underlying space of the representation and sometimes $H^k(G,\rho)$ when $\rho$ is the action of
$G$ on $V$. We hope that this inconsistency will not cause confusion.

\section{$\rho$-centralizing generating sequences}
\label{sec:cent}

Let $G$ be a group. Given a homomorphism $\rho$ from $G$ to another group, denote by $C_{\rho}(G)$ the set
of all $g\in G$ such that $\rho(g)$ is a non-trivial central element of $\rho(G)$. In particular, if $\rho(G)$ is abelian, then $C_{\rho}(G)=G\setminus\Ker\rho$.

\begin{Definition}\rm Let $g_1,g_2,\ldots, g_k$ be a finite sequence of elements of $G$, and for each $1\leq i\leq k$ let $G_i=\la g_j: 1\leq j\leq i\ra$ be the subgroup generated by the first $i$ elements of the sequence. We will say that the sequence $\{g_n\}$ is {\it $\rho$-centralizing} if $g_1\in C_{\rho}(G)$ and for each $2\leq i\leq k$ there exists $j<i$ such that $g_j\in C_{\rho}(G)$ and $[g_i,g_j]\in G_{i-1}$.
\end{Definition}

A key problem we will consider in this paper is establishing the existence of a $\rho$-centralizing generating sequence for a given $\rho$
(that is, a $\rho$-centralizing sequence whose elements generate $G$). Since this condition will arise very frequently, we will abbreviate 
{\it $\rho$-centralizing generating} as $\rho$-CG for the rest of the paper.

The following lemma describes a particularly simple way to produce $\rho$-CG sequences.

\begin{Lemma}
\label{centr_trivial} Let $G$ be a finitely generated group, and let $\rho$ be a homomorphism from $G$ to another group.
Suppose that there exist pairwise commuting elements $g_1,\ldots, g_k\in C_{\rho}(G)$ such that the union of their centralizers
$\cup_{i=1}^k \Cent_G(g_i)$ generates $G$. Then $G$ has a $\rho$-CG sequence.  
\end{Lemma}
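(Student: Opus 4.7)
The proof should be short and direct. My plan is to build an explicit $\rho$-CG sequence by listing the pairwise commuting elements $g_1,\ldots,g_k$ first and then appending a finite generating subset drawn from the union of their centralizers.

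First I would use the finite generation of $G$ to extract finitely many elements $h_1,\ldots,h_m \in \bigcup_{i=1}^k \Cent_G(g_i)$ such that $\{g_1,\ldots,g_k,h_1,\ldots,h_m\}$ generates $G$. This is possible: since $\bigcup \Cent_G(g_i)$ generates $G$, every element of a fixed finite generating set of $G$ can be expressed as a word in elements of this union, and only finitely many such elements appear in these words. For each $l$ fix an index $j_l \in \{1,\ldots,k\}$ with $h_l \in \Cent_G(g_{j_l})$.

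Next I would verify that the concatenated sequence $g_1,g_2,\ldots,g_k,h_1,h_2,\ldots,h_m$ is $\rho$-centralizing with $G_i = \la g_1,\ldots,g_i\ra$ for $i\leq k$ and $G_{k+l} = \la g_1,\ldots,g_k,h_1,\ldots,h_l\ra$ for $l\geq 1$. The first element $g_1$ lies in $C_\rho(G)$ by hypothesis. For $2\leq i\leq k$, take $j=1$: then $g_1\in C_\rho(G)$ and pairwise commutativity gives $[g_i,g_1]=1\in G_{i-1}$. For the entry $h_l$ at position $i=k+l$, take $j=j_l<i$: by hypothesis $g_{j_l}\in C_\rho(G)$, and $[h_l,g_{j_l}]=1\in G_{i-1}$ since $g_{j_l}\in G_k \subseteq G_{i-1}$. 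The sequence generates $G$ by construction.

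There is no real obstacle here beyond observing that "generates" combined with finite generation of $G$ yields a finite subset of the union of centralizers which, together with the $g_i$, generates $G$. Everything else is a direct check from the definition of $\rho$-CG, using that all $g_i$ belong to $C_\rho(G)$ and that pairwise commutativity plus membership in the appropriate centralizer makes each successive commutator trivial (hence a fortiori contained in the previously generated subgroup).
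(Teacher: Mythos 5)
Your proposal is correct and follows essentially the same approach as the paper: extract a finite generating subset from the union of centralizers and prepend the pairwise commuting elements $g_1,\ldots,g_k$, then verify the $\rho$-CG condition directly. The paper's proof is just a terser version of the same argument.
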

\begin{proof} Since $G$ is finitely generated, we can find a finite generating set $h_1,\ldots, h_m$ for $G$ contained in $\cup \Cent_G(g_i)$.
It is clear that the sequence $g_1,\ldots, g_k, h_1,\ldots, h_m$ is $\rho$-CG.
\end{proof}

In this section we will show that 
\begin{itemize}
\item[(i)] if $\rho$ is an irreducible representation of $G$ over some field, then the existence of a  $\rho$-CG sequence implies that $H^1(G,\rho)=0$; 
\item[(ii)] if $\rho:G\to\dbR$ is a non-trivial character, the existence of a $\rho$-CG sequence implies that $[\rho]$ lies in $\Sigma(G)$, the BNS invariant of $G$.
\end{itemize}

We will start with briefly recalling Brown's characterization of $\Sigma(G)$ in terms of actions on $\dbR$-trees \cite{Br}
and establishing a very simple property of group $1$-cocycles.

We will need only very basic properties of group actions on $\dbR$-trees (see, e.g. \cite{AB} for the proofs).
Let $X$ be an $\dbR$-tree with metric $d$, and suppose that a group $G$ acts on $X$ by isometries; denote the action by $\alpha$. For each $g\in G$ define $l(g)=\inf_{x\in X}d(\alpha(g)x,x)$ (it is known that the infimum is always attained), and let $\calA_g=\{x\in X: d(\alpha(g)x,x)=l(g)\}$.
An element $g\in G$ is called {\it elliptic} (with respect to $\alpha$) if $l(g)=0$, in which case $\calA_g$ is the fixed point set of $g$. If $l(g)>0$, then $g$ is called {\it hyperbolic}. In this case $\calA_g$ is a line, called {\it the axis of $g$}, and $g$ acts on $\calA_g$ as a translation by $l(g)$; moreover, $\calA_g$ is the unique line invariant under the action of $g$.
An action $\alpha$ is called 
\begin{itemize}
\item{\it abelian} if there exists a character $\chi:G\to\dbR$ such that $l(g)=|\chi(g)|$ for all $g\in G$. If such $\chi$
exists, it is unique up to sign, and we say that $\alpha$ is associated with $\chi$.
\item {\it non-trivial} if it has no (global) fixed points and no (global) invariant line.
\end{itemize}

The following characterization of the BNS invariant $\Sigma(G)$ was obtained by Brown~\cite{Br}:
 
\begin{Theorem} 
\label{thm:Brown}
Let $G$ be a finitely generated group, and let $\chi:G\to\dbR$ be a non-trivial character. Then 
$[\chi]\in \Sigma(G)$ if and only if $G$ has no non-trivial abelian actions associated with $\chi$. In particular, by Theorem~\ref{BNS},
$[G,G]$ is finitely generated if and only if $G$ has no non-trivial abelian actions.
\end{Theorem}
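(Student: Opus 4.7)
The plan is to prove both implications of the equivalence, then deduce the ``in particular'' assertion by combining with Theorem~\ref{BNS}. For the forward direction (if $[\chi]\in\Sigma(G)$ then there is no non-trivial abelian action associated with $\chi$), I would argue by contradiction: suppose $\alpha\colon G\acts X$ is a non-trivial abelian action with $l(g)=|\chi(g)|$ for all $g\in G$. Fix a finite generating set $S$ of $G$ and a basepoint $x_0\in X$. The hypothesis $[\chi]\in\Sigma(G)$, in its Bieri--Neumann--Strebel form, asserts that the full subgraph of the Cayley graph on $G_\chi:=\chi^{-1}([0,\infty))$ is connected. Transporting suitable ``positive'' words along $\alpha$ produces a family of paths in $X$ confined to one side of a designated branch point of the tree, and assembling these constraints over all generators forces $G$ either to fix a point of $X$ or to preserve a line (e.g.\ the axis of a hyperbolic element on which $\chi$ is non-zero), contradicting non-triviality of $\alpha$.

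For the reverse direction (if there is no non-trivial abelian action associated with $\chi$ then $[\chi]\in\Sigma(G)$), I would argue by contrapositive and construct the desired tree action directly from the failure of the BNS condition. A natural approach is to build an $\dbR$-tree as a limit of simplicial approximations: for each integer $n\geq 1$, cut the Cayley graph of $G$ along the level set $\chi^{-1}(\{n\})$ and form a simplicial tree whose vertices correspond to connected components of the complementary pieces; the failure of BNS ensures these trees have unbounded diameter. Rescaling the metric by $1/n$ and passing to an equivariant Gromov--Hausdorff limit (in the Culler--Morgan / Paulin framework) yields an action of $G$ on an $\dbR$-tree whose translation length function is $|\chi|$; the obstruction to $[\chi]\in\Sigma(G)$ is precisely what prevents the limit from collapsing to a fixed point or an invariant line.

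The main obstacle is in this reverse direction: extracting a genuine non-trivial action on an $\dbR$-tree from the purely combinatorial failure of BNS. One must verify that the approximating simplicial actions are non-degenerate enough for a compactness argument to apply, and rule out the possibility that the limit collapses to a trivial action or to one stabilizing a line. Brown's argument in \cite{Br} navigates these issues through an analysis of valuations on the group ring $\dbZ G$, which relate directly the monoid structure of $G_\chi$ to the existence of suitable tree actions. The final assertion then follows from Theorem~\ref{BNS}: $[G,G]$ is finitely generated iff $\Sigma(G)=S(G)$, iff $[\chi]\in\Sigma(G)$ for every non-trivial $\chi$, iff $G$ admits no non-trivial abelian action associated with any non-zero character; and since every non-trivial abelian action is automatically associated with some non-zero character (the unique one determining its translation length function up to sign), this last condition is equivalent to $G$ having no non-trivial abelian action at all.
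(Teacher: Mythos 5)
The paper does not actually prove this statement: it is quoted verbatim from Brown's paper \cite{Br} as a known characterization of the BNS invariant, so there is no ``paper's own proof'' against which to compare step by step. Your proposal is therefore an attempt to reconstruct Brown's theorem itself, and it should be judged on its own merits.

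Your forward direction is too vague to be called a proof. You assert that connectivity of the Cayley graph on $G_\chi=\chi^{-1}([0,\infty))$, after ``transporting suitable positive words along $\alpha$,'' forces $G$ to fix a point or preserve a line. As written this is not an argument: you do not specify what the paths in $X$ are, why they stay on one side of a branch point, or how the ``constraints'' are assembled. The actual content of this implication is that a non-trivial action on an $\mathbb{R}$-tree gives, via Bass--Serre-type structure theory (or Brown's valuation language), an infinite-index splitting of $G$ compatible with $\chi$, which in turn obstructs the connectivity required for $[\chi]\in\Sigma(G)$; none of this is visible in your sketch. The reverse direction is where you are most careful: you correctly identify it as the hard step, and you honestly flag that your proposed route (rescaled simplicial approximations and an equivariant Gromov--Hausdorff / Culler--Morgan--Paulin limit) differs from Brown's actual proof, which constructs the tree from a discrete valuation on $\mathbb{Z}G$ tailored to the failure of the $\Sigma$-condition. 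Your limit approach is plausible in spirit, but you have not shown that the approximating actions have translation lengths converging to $|\chi|$, that the limit is non-degenerate, or that it carries no global fixed point or invariant line; these are exactly the points you name as obstacles and leave unresolved. So there is a genuine gap, not merely a stylistic difference. Your deduction of the ``in particular'' clause from Theorem~\ref{BNS}, including the observation that every non-trivial abelian action of a finitely generated group determines a non-zero character via its translation length function, is correct and cleanly stated.
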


Now let $(\rho,V)$ be a representation of a group $G$. Recall that $H^1(G,\rho)\cong Z^1(G,\rho)/B^1(G,\rho)$ where
\begin{align*}
&Z^1(G,\rho)=\{f: G\to V: f(xy)=f(x)+\rho(x)f(y)\mbox{ for all }x,y\in G\};\\ 
&B^1(G,\rho)=\{f: G\to V: \mbox{ there exists }v\in V
\mbox{ such that } f(x)=\rho(x)v-v\mbox{ for all }x\in G\}.
\end{align*}
  
Also recall that $C_{\rho}(G)=\{g\in G\mbox{ such that } \rho(g) \mbox{ is non-trivial and central in } \rho(G)\}$.  
  
\begin{Lemma}
\label{lem1}
Let $G$ be a group, let $(\rho,V)$ be a non-trivial irreducible representation of $G$ over a field, let $g\in C_{\rho}(G)$, and let $f\in Z^1(G,\rho)$.
The following hold:
\begin{itemize}
\item[(i)] There exists a coboundary $b\in B^1(G,\rho)$ with $f(g)=b(g)$.
\item[(ii)] Suppose that $f(g)=0$. Then $f(x)=0$ for every $x\in G$ such that $f(gx)=f(xg)$; in particular,
$f(x)=0$ for every $x\in G$ which commutes with $g$.
\end{itemize}
\end{Lemma}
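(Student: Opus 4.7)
The engine of the proof is the observation that the operator $T := \rho(g) - 1$ is a bijection of $V$. Indeed, since $g \in C_{\rho}(G)$, the element $\rho(g)$ commutes with every $\rho(h)$, and hence so does $T$; consequently $\ker T$ and $\operatorname{im} T$ are both $G$-invariant subspaces of $V$. The non-triviality hypothesis $\rho(g) \neq 1$ gives $T \neq 0$, so $\ker T \neq V$ and $\operatorname{im} T \neq 0$, and irreducibility of $V$ then forces $\ker T = 0$ and $\operatorname{im} T = V$. This single fact drives both parts of the lemma.

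With $T$ bijective, part (i) is immediate: surjectivity yields a vector $v \in V$ with $(\rho(g) - 1) v = f(g)$, and the coboundary $b(x) := (\rho(x) - 1) v$ then satisfies $b(g) = f(g)$, as required.

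For part (ii), I would apply the cocycle identity in two different ways to the same pair. Using $f(g) = 0$, one obtains $f(gx) = f(g) + \rho(g) f(x) = \rho(g) f(x)$ and $f(xg) = f(x) + \rho(x) f(g) = f(x)$, so the assumption $f(gx) = f(xg)$ rewrites as $(\rho(g) - 1) f(x) = 0$. Injectivity of $T$ then forces $f(x) = 0$. The special case of $x$ commuting with $g$ follows at once, since $gx = xg$ makes the hypothesis $f(gx) = f(xg)$ automatic.

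The only subtlety worth flagging is that $V$ may be infinite-dimensional and the ground field need not be algebraically closed, so one cannot simply invoke Schur's lemma to diagonalise $\rho(g)$. The argument circumvents this entirely: one does not need $\rho(g)$ to act by a non-trivial scalar, only the weaker statement that $\rho(g) - 1$ is a non-zero $G$-equivariant endomorphism of $V$, and irreducibility of $V$ applied to its kernel and image is enough to upgrade this to bijectivity.
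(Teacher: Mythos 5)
Your proof is correct and follows essentially the same route as the paper: establish that $\rho(g)-1$ is invertible, then use surjectivity for (i) and injectivity for (ii) via the same two cocycle computations. The one place you elaborate is in spelling out why $\rho(g)-1$ is invertible — the paper simply cites Schur's Lemma, which in its basic form (a non-zero $G$-equivariant endomorphism of an irreducible module is an isomorphism) is exactly the argument you wrote out about the kernel and image being invariant subspaces; your remark that one should not reach for the "endomorphisms are scalars" version (which needs algebraic closedness) is a sensible clarification, but it is still Schur's Lemma you are invoking, just the form that holds over any field and in any dimension.
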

\begin{proof} (i) Since $g\in C_{\rho}(G)$ and $\rho$ is irreducible, by Schur's Lemma
the operator $\rho(g)-1$ is invertible. Thus,
if $v=(\rho(g)-1)^{-1}f(g)$, then the map $b(x)=\rho(x)v-v$ is a coboundary with the required property.

(ii) We have $f(xg)=f(x)+\rho(x)f(g)=f(x)$ and $f(gx)=f(g)+\rho(g)f(x)=\rho(g) f(x)$, so 
$(\rho(g)-1)f(x)=0$ and hence $f(x)=0$.
\end{proof}

\begin{Proposition} 
\label{cent_exist}
Let $G$ be a finitely generated group. The following hold:
\begin{itemize}
\item[(a)] If $(\rho,V)$ is an irreducible representation of $G$ over some field and $G$ admits a $\rho$-CG sequence, then $H^1(G,\rho)=0$. 
\item[(b)] Let $\rho:G\to\dbR$ be a non-trivial character.
If $G$ admits a $\rho$-CG sequence, then $[\rho]\in \Sigma(G)$. In particular, if $G$ admits a
$\rho$-CG sequence for every non-trivial character $\rho$, then $[G,G]$ is finitely generated.
\end{itemize}
\end{Proposition}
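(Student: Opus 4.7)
The plan is to prove both parts by induction along a $\rho$-CG sequence $g_1,\ldots,g_k$ whose elements generate $G$, using Lemma~\ref{lem1} for (a) and Brown's Theorem~\ref{thm:Brown} for (b). The key structural feature supplied by the $\rho$-CG property is: for every $i\geq 2$ there exists $j<i$ with $g_j\in C_\rho(G)$ and $c_{ij}:=[g_i,g_j]\in G_{i-1}$, where $G_{i-1}=\langle g_1,\ldots,g_{i-1}\rangle$. This is what lets us transfer information from $G_{i-1}$ and the pivot $g_j$ to the new generator $g_i$.

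For (a), I would start with an arbitrary $f\in Z^1(G,\rho)$. Applying Lemma~\ref{lem1}(i) to $g_1\in C_\rho(G)$ yields a coboundary $b\in B^1(G,\rho)$ with $b(g_1)=f(g_1)$, and replacing $f$ by $f-b$ we may assume $f(g_1)=0$. By the cocycle identity, if $f|_{G_{i-1}}\equiv 0$ and $f(g_i)=0$, then $f|_{G_i}\equiv 0$; so it suffices to show inductively that $f(g_i)=0$ for all $i$. Given the inductive hypothesis, pick $j<i$ as above and expand $f(g_ig_j)$ two ways using the identity $g_ig_j=c_{ij}g_jg_i$: the cocycle rule gives $f(g_ig_j)=f(g_i)$ on one side (since $f(g_j)=0$), and $\rho(c_{ij})\rho(g_j)f(g_i)$ on the other (since $f(c_{ij})=0=f(g_j)$). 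Centrality of $\rho(g_j)$ in $\rho(G)$ forces $\rho(c_{ij})=1$, so $(\rho(g_j)-1)f(g_i)=0$, and Schur's lemma applied to $g_j\in C_\rho(G)$ gives $f(g_i)=0$.

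For (b), by Theorem~\ref{thm:Brown} it suffices to show $G$ admits no non-trivial abelian action $\alpha$ on an $\dbR$-tree $X$ associated with $\rho$. Suppose such $\alpha$ exists. Since $\rho(g_1)\neq 0$, the isometry $g_1$ is hyperbolic; let $A:=\calA_{g_1}$ be its axis. I claim each $g_i$ stabilizes $A$ setwise, which will contradict the assumption that $\alpha$ has no invariant line. The base case $i=1$ is immediate. For the inductive step pick $j<i$ as above; by induction both $g_j$ and $c_{ij}$ stabilize $A$, since they lie in $G_{i-1}$. Because $l(g_j)=|\rho(g_j)|>0$, $g_j$ is hyperbolic, and since a hyperbolic tree isometry has a unique invariant line, $A$ must coincide with the axis of $g_j$. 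Now $g_ig_jg_i^{-1}=c_{ij}g_j$ is conjugate to $g_j$, hence hyperbolic with axis $g_i(A)$; it also stabilizes $A$, being a product of two elements that do. Uniqueness of the invariant line forces $g_i(A)=A$, completing the induction.

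The main obstacle in both parts is the same: to pass from control on $G_{i-1}$ to control on $g_i$, one must exploit that the pivot $g_j$ does more than lie in $G_{i-1}$ --- its image $\rho(g_j)$ is non-trivial and central in $\rho(G)$. In (a) this is what makes $\rho(g_j)-1$ invertible via Schur's lemma, and in (b) it is precisely what makes $g_j$ hyperbolic with a unique invariant line. These two features play completely parallel roles in the two inductions and are exactly what the $\rho$-CG hypothesis is designed to supply.
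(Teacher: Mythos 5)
Your proof is correct and takes essentially the same approach as the paper: both parts proceed by induction along the $\rho$-CG sequence, using Schur's lemma to invert $\rho(g_j)-1$ for part (a) and Brown's $\dbR$-tree criterion together with uniqueness of the axis of a hyperbolic isometry for part (b). The only differences are cosmetic --- you use the opposite commutator convention, conjugate $g_j$ by $g_i$ rather than by $g_i^{-1}$ in (b), and inline (together with the observation $\rho(c_{ij})=1$) the computation that the paper packages as Lemma~\ref{lem1}(ii).
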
 
\begin{Remark}\rm
A weaker version of Proposition~\ref{cent_exist}(b) has been proved before (see \cite[Lemma~1.9]{KMM}). That lemma covers the case when
$G$ has a generating sequence $g_1,\ldots, g_t$ such that $g_1\in C_{\rho}(g)$ and for each $i\geq 2$ there exists $j<i$ such that $g_i$ and $g_j$ commute and $g_j\in C_{\rho}(g)$.
\end{Remark}

\begin{proof}
In both parts of the proof we let $g_1,\ldots, g_k$ be a $\rho$-CG sequence
and $G_i=\la g_1,\ldots, g_i\ra$ for $1\leq i\leq k$ (so that $G_k=G$). 

(a) Take any $f\in Z^1(G,\rho)$. By Lemma~\ref{lem1}(i), after modifying $f$ by a coboundary, we can assume that $f(g_1)=0$.
Then $f=0$ on $G_1$, and we will now prove that $f=0$ on $G_i$ for all $1\leq i\leq k$ by induction on $i$.

Take $i\geq 2$, and assume that $f=0$ on $G_{i-1}$. By hypothesis there exists $j<i$ and $r\in G_{i-1}$ such
that $g_j\in C_{\rho}(G)$ and $g_i g_j=g_j g_i r$. Then $f(g_i g_j)=f(g_j g_i)+\rho(g_j g_i) f(r)=f(g_j g_i)$, whence
$f(g_i)=0$ by Lemma~\ref{lem1}(ii). Since $G_i=\la G_{i-1},g_i\ra$, we have $f(G_i)=0$ as desired.

(b) The following argument is similar to the proof of the main theorem in \cite{OK}.
Let $(\alpha,X)$ be an abelian action of $G$
on an $\dbR$-tree $X$ associated to $\rho$.
By Theorem~\ref{thm:Brown}, it suffices to show that such $\alpha$ is trivial. 

First note that $C_{\rho}(G)=G\setminus \Ker \rho$ (since $\rho(G)$ is abelian). Thus, if $l:G\to \dbR_{\geq 0}$ is the length function associated to $\alpha$, then  $C_{\rho}(G)=\{g\in G: l(g)>0\}$. In other words, $C_{\rho}(G)$ is precisely the set of hyperbolic elements (with respect to $\alpha$); in particular, $g_1$ is hyperbolic by assumption. We claim that its axis $\calA=\calA_{g_1}$ is invariant under the entire group $G$ (which will finish the proof). We will prove that $\calA$ is invariant under $G_i$ for all $1\leq i\leq k$ by induction on $i$.

Take $i\geq 2$, and assume that $\calA$ is invariant under $G_{i-1}$ (in particular, $\calA$ is the axis for any hyperbolic element in $G_{i-1}$ by the uniqueness of the invariant line of a hyperbolic element). By hypothesis there exists $j<i$ and $r\in G_{i-1}$ such that $g_j$ is hyperbolic and $g_i^{-1} g_j g_i= g_j r$. The element $g_i^{-1} g_j g_i$ is also hyperbolic (being a conjugate of $g_j$), and its axis is 
$\alpha(g_i^{-1})(\calA)$. On the other hand $g_i^{-1} g_j g_i=g_j r\in G_{i-1}$, so $\calA$ is the axis of $g_i^{-1} g_j g_i$. Thus, $\alpha(g_i^{-1})(\calA)=\calA$, so $\calA$ is $g_i$-invariant and hence $G_i$-invariant.   
\end{proof}

\section{Proofs of Theorem~\ref{OK} for $G=\IA_n$ and Theorem~\ref{coh_main}(a)}
\label{sec:ian}

In this section we will prove Theorem~\ref{OK} for $G=\IA_n$ and Theorem~\ref{coh_main}(a).
By Proposition~\ref{cent_exist}, we are reduced to proving the following result:

\begin{Theorem} 
\label{thm:degree2}
Let $G=\IA_n$ for some $n\geq 4$, and let $\rho$ be a non-trivial homomorphism from $G$ to another group with $\Ker\rho\supseteq [G,G]$. 
Then $G$ has a $\rho$-CG sequence.
\end{Theorem}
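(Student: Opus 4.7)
The plan, in view of Proposition~\ref{cent_exist}, is to construct a $\rho$-CG sequence in $\IA_n$. Since $\Ker\rho\supseteq[G,G]$, the image $\rho(\IA_n)$ is abelian, so $C_\rho(\IA_n)=\IA_n\setminus\Ker\rho$, and by Lemma~\ref{centr_trivial} it will suffice to exhibit pairwise commuting elements $g_1,\ldots,g_k\in\IA_n\setminus\Ker\rho$ whose centralizers in $\IA_n$ jointly generate $\IA_n$.

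First I would recall Magnus' classical generating set for $\IA_n$: the conjugations $K_{ij}\colon x_i\mapsto x_j^{-1}x_ix_j$ (other generators fixed), for $i\neq j$, and the commutator automorphisms $K_{i,jk}\colon x_i\mapsto x_i[x_j,x_k]$ (others fixed), for distinct $i,j,k$ with $j<k$. By a classical result (Andreadakis, Cohen--Pakianathan, Farb, Kawazumi), the images of these generators form a $\dbZ$-basis of $\IA_n^{ab}$, so $\rho$ is non-trivial if and only if it takes a nonzero value on at least one Magnus generator. Next I would use that $\IA_n$ is normal in $\Aut(F_n)$: for every $\phi\in\Aut(F_n)$ the conjugation $\iota_\phi$ restricts to an automorphism of $\IA_n$, and $\IA_n$ admits a $\rho$-CG sequence if and only if it admits a $(\rho\circ\iota_\phi)$-CG sequence (apply $\iota_\phi$ termwise). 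Hence I may freely replace $\rho$ by any character in its $\Aut(F_n)$-orbit, equivalently (via $\Aut(F_n)/\IA_n\cong\GL_n(\dbZ)$) its $\GL_n(\dbZ)$-orbit.

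Using this $\GL_n(\dbZ)$-freedom---via index permutations from $S_n\subset\GL_n(\dbZ)$ together with elementary-matrix actions on $\IA_n^{ab}$---I would reduce $\rho$ to one of a small number of explicit ``standard forms'' in which $\rho$ is nonzero on a prescribed Magnus generator (say $K_{1,23}$ or $K_{12}$, depending on the orbit type). For each such form I would exhibit a family of pairwise commuting Magnus generators, all outside $\Ker\rho$, whose centralizers in $\IA_n$ jointly generate $\IA_n$; Lemma~\ref{centr_trivial} then concludes. The commutations will follow from the basic fact that two Magnus generators with disjoint index support commute, and the hypothesis $n\geq 4$ supplies the index room needed for such disjoint families to exist.

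The main obstacle is the case analysis after the $\GL_n(\dbZ)$-reduction: identifying the correct list of standard forms, and for each of them simultaneously verifying that the chosen commuting family avoids $\Ker\rho$ and that its centralizers really generate all of $\IA_n$. This is a finite but careful bookkeeping with the basic commutation relations among the $K_{ij}$'s and $K_{i,jk}$'s; the bound $n\geq 4$ is used precisely because for smaller $n$ there are not enough index-disjoint Magnus generators to form the required families.
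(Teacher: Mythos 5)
Your plan has a genuine gap: you propose to discharge the claim via Lemma~\ref{centr_trivial}, i.e., by exhibiting a family of \emph{pairwise commuting} Magnus generators outside $\Ker\rho$ whose centralizers jointly generate $\IA_n$, with the commutation coming from disjoint index supports. This is strictly weaker than what the paper uses, and it fails already at $n=4$. With $\dbn=\{1,2,3,4\}$ you can extract at most two disjoint index pairs, so at most two Magnus generators $K_{12},K_{34}$ with disjoint support; but their centralizers do not cover all Magnus generators of $\IA_4$ (for instance $K_{31}$ commutes with $K_{21}$ but with neither $K_{12}$ nor $K_{34}$, and you cannot add $K_{21}$ to your commuting family because $K_{12}$ and $K_{21}$ do not commute). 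The same shortage of disjoint room persists for $n=5,6,7$: a generator such as $K_{135}$ has support meeting every pair in any partition of $\dbn$ into blocks of size $\le 2$, so it commutes with none of your chosen generators.

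What the paper actually does is use the full flexibility of the $\rho$-CG definition in two essential ways that Lemma~\ref{centr_trivial} cannot capture. First, it works with the set $S=\{K_{12},K_{21},K_{34},K_{43}\}$, which is \emph{not} pairwise commuting (the pairs $(K_{12},K_{21})$ and $(K_{34},K_{43})$ do not commute); the $\rho$-CG condition is satisfied because the sequence is ordered so that each new element commutes with a \emph{previous} element of $C_\rho(G)$, not with all of them. Second, the four generators $K_{134},K_{234},K_{312},K_{412}$ commute with \emph{no} element of $S$; they are handled last in the sequence using the genuine relation (R4), $[K_{bcd},K_{ab}]=[K_{ad},K_{ac}]$, so that $[g_i,g_j]$ lands in the subgroup $G_{i-1}$ generated by all the previously-placed $K_{xy}$'s. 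This ``commutes modulo earlier terms'' phenomenon is exactly what the $\rho$-CG definition was built to accommodate, and it is invisible from the ``disjoint support $\Rightarrow$ commute'' viewpoint you propose.

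Finally, your conjugation step is also underpowered as stated. Making $\rho$ ``nonzero on a prescribed Magnus generator'' (a single one) is easy, but the paper needs $\rho$ to be simultaneously nonvanishing on the whole four-element set $\{K_{12},K_{21},K_{34},K_{43}\}$; this is Proposition~\ref{prop:keyIAn}, proved by the multi-step elementary-matrix calculation of Lemma~\ref{lemkey4}, and it is where the real work lies. So the two pieces you would need are precisely the two pieces your proposal replaces with optimistic placeholders.
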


The proof of Theorem~\ref{thm:degree2} will illustrate many key ideas used in the proof of other cases of Theorems~\ref{OK} and Theorem~\ref{coh_main}
without introducing the general technical machinery.

For the rest of the paper, given $n\in\dbN$, we set $$\dbn=\{1,\ldots,n\}.$$
In \cite{Ma}, Magnus proved that $\IA_n$ is generated by the automorphisms $K_{ij}$, $i\neq j$ and $K_{ijk}$, $i,j,k$ distinct, 
with $i,j,k\in\dbn$, defined below (see also \cite{BBM} for another proof):
$$K_{ij}:\left\{\begin{array}{l}x_i\mapsto x_j^{-1}x_i x_j\\
x_k\mapsto x_k\mbox{ for }k\neq i\end{array}
\right.,\qquad
K_{ijk}:\left\{\begin{array}{l}x_i\mapsto x_i[x_j,x_k]\\
x_l\mapsto x_l\mbox{ for }l\neq i
\end{array}\right.
.
$$
Since $K_{ijk}^{-1}=K_{ikj}$, it is enough to consider $K_{ijk}$ with $j<k$. The elements $K_{ij}$ and $K_{ijk}$ with $j<k$ will be referred to as the {\it standard generators} of $\IA_n$.

\begin{Remark}\rm If $n=2$, there are no generators $K_{ijk}$, and $K_{12}$ and $K_{21}$ are simply the conjugations by $x_2$ and $x_1$, respectively. 
Hence the group $\IA_2$ coincides with the group of inner automorphisms of $F_2$ and thus is free of rank $2$.
\end{Remark}

Below is the list of relations between the elements $K_{ij}$ and $K_{ijk}$ that will be used in the proof of Theorem~\ref{thm:degree2}.

\begin{Lemma} 
\label{lem:rel}
Let $a_1,a_2,a_3,b_1,b_2,b_3\in\dbn$, and assume that $\{a_i\}$ are distinct and $\{b_i\}$ are distinct.
The following relations hold:
\begin{itemize}
\item[(R1)] $[K_{a_1 a_2},K_{b_1 b_2}]=1$ if $a_1\neq b_1,b_2$ and $b_1\neq a_1, a_2$;
\item[(R2)] $[K_{a_1 a_2},K_{b_1 b_2 b_3}]=1$ if $a_1\neq b_1,b_2, b_3$ and $b_1\neq a_1, a_2$;
\item[(R3)] $[K_{a_1 a_2 a_3},K_{b_1 b_2 b_3}]=1$ if $a_1\neq b_1, b_2, b_3$ and $b_1\neq a_1, a_2,a_3$;
\item[(R4)] $[K_{bcd},K_{ab}] = [K_{ad}, K_{ac}]$ if $a,b,c,d\in\dbn$ are distinct.
\end{itemize}
\end{Lemma}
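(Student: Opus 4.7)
My plan is to verify each of (R1)--(R4) by direct computation, using the fact that an element of $\Aut(F_n)$ is determined by its action on the free generators $x_1,\ldots,x_n$. For each identity I would evaluate both sides on every $x_l$ and check that the resulting words in $F_n$ agree. The hypotheses on the indices in each statement are designed precisely so that these evaluations go through cleanly and no additional input is required.

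For (R1), (R2) and (R3), the point is that each of the two automorphisms involved is ``localized'': $K_{ij}$ alters only $x_i$ and its image involves only $x_i,x_j$, while $K_{ijk}$ alters only $x_i$ and its image involves only $x_i,x_j,x_k$. The conditions $a_1\neq b_1$ and $b_1\neq a_1$ guarantee that the two automorphisms modify different generators, while the remaining index conditions ($a_1\neq b_2,b_3$ and $b_1\neq a_2,a_3$) guarantee that each of the two automorphisms fixes every generator appearing in the image formula of the other. Under these disjointness conditions both compositions agree with the identity on each $x_l$ that is neither ``active'' index, and at each active index one composition applies the nontrivial modification while the other acts as the identity on the resulting word. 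This gives (R1), (R2) and (R3) with essentially no calculation.

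Relation (R4) is the only statement that is not a pure commutation, so it requires a genuine if short computation. Both sides clearly fix $x_l$ for $l\notin\{a,b,c,d\}$, so the check reduces to $l\in\{a,b,c,d\}$. Since $K_{ac}$ and $K_{ad}$ fix $x_b,x_c,x_d$, the right-hand side $[K_{ad},K_{ac}]$ fixes $x_b,x_c,x_d$ automatically; for the left-hand side a very short calculation shows that the $[x_c,x_d]^{\pm 1}$ contributions inserted and removed by $K_{bcd}^{\pm 1}$ on $x_b$ telescope in the commutator, so $[K_{bcd},K_{ab}]$ also fixes $x_b,x_c,x_d$. The substantive step is at $l=a$: one expands $[K_{bcd},K_{ab}](x_a)$ step by step, tracking how $K_{bcd}^{\pm 1}$ replaces each occurrence of $x_b^{\pm 1}$ by $x_b[x_c,x_d]^{\pm 1}$ (resp.\ its inverse) and how $K_{ab}^{\pm 1}$ conjugates $x_a$ by $x_b^{\mp 1}$, and similarly expands $[K_{ad},K_{ac}](x_a)$ by iterated conjugation by $x_c^{\pm 1}$ and $x_d^{\pm 1}$; after cancellation both expressions reduce to the same conjugate of $x_a$ by an element built from $[x_c,x_d]$. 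The main obstacle is purely bookkeeping, since the intermediate words become long and one must be careful to cancel conjugates of $[x_c,x_d]$ in the correct order; no idea beyond the defining formulas for $K_{ij}$ and $K_{ijk}$ is needed.
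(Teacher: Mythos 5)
The paper does not actually prove Lemma~\ref{lem:rel}; it is stated as a known collection of facts about the Magnus generators, so there is no in-paper argument to compare against and a direct verification such as you propose is what one would expect.

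Your treatment of (R1)--(R3) is correct and essentially the only reasonable argument: each generator alters a single $x_l$ and its image involves a known short list of generators, and the hypotheses are exactly the disjointness conditions guaranteeing that each automorphism fixes all letters occurring in the other's non-trivial image formula.

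For (R4), however, you assert that after cancellation ``both expressions reduce to the same conjugate of $x_a$,'' but you do not carry out the calculation, and if you do, under the conventions the paper itself states (automorphisms composed as functions, and $[g,h]=g^{-1}h^{-1}gh$ for elements of the free group) you will find a sign discrepancy. Evaluating at $x_a$ gives
\[
[K_{bcd},K_{ab}](x_a)=[x_c,x_d]^{-1}\,x_a\,[x_c,x_d],
\qquad
[K_{ad},K_{ac}](x_a)=[x_c,x_d]\,x_a\,[x_c,x_d]^{-1},
\]
so the two sides of (R4) as printed are inverse automorphisms of each other rather than equal; with these conventions the correct statement is $[K_{bcd},K_{ab}]=[K_{ac},K_{ad}]$ (equivalently $[K_{ad},K_{ac}]^{-1}$). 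This is harmless for the paper, since (R4) is only invoked to conclude that $[K_{bcd},K_{ab}]$ lies in the subgroup generated by the $K_{xy}$'s, and that conclusion is insensitive to the order of the two factors on the right-hand side. But a proof of (R4) as literally stated cannot ``go through cleanly'' as you claim; you should either flip the order on the right-hand side or flag the sign/composition convention being used. In short: your plan is the right one, but the one substantive computation it defers is exactly the place where a careful bookkeeping would have revealed that the identity needs a small correction.
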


The main technical result we shall establish in this section is the following proposition:

\begin{Proposition}
\label{prop:keyIAn}
Let $G=\IA_n$ for some $n\geq 4$, and let $H$ be a proper subgroup of $G$ containing $[G,G]$. Then there exists $a\in\Aut(F_n)$ such that
the conjugate subgroup $a H a^{-1}$ does not contain any of the elements $K_{12}, K_{21}, K_{34}$ and $K_{43}$.
\end{Proposition}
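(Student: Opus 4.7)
The plan is to work in the abelianization $G^{ab}=G/[G,G]$ and exploit the fact that conjugation by $\Aut(F_n)$ descends to the well-understood $\GL_n(\dbZ)$-action on $G^{ab}$. Since $G=\IA_n$ is normal in $\Aut(F_n)$ and contains $[G,G]$, the image $\bar H\subsetneq G^{ab}$ of $H$ is a proper subgroup, and the proposition reduces to finding $g\in\GL_n(\dbZ)$ with $g\cdot K_{ij}\notin\bar H$ for each $(i,j)\in\{(1,2),(2,1),(3,4),(4,3)\}$. Since $G^{ab}$ is finitely generated abelian and $\bar H$ is proper, I would pick a nonzero homomorphism $\chi:G^{ab}\to A$ vanishing on $\bar H$, where $A$ is cyclic (either $\dbZ$ or $\dbZ/p$); it then suffices to find a single $g$ with $\chi(g\cdot K_{ij})\neq 0$ for all four pairs simultaneously.

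Next, using the classical identification $G^{ab}\cong V^*\otimes\wedge^2 V$ (where $V=F_n^{ab}$) as $\GL_n(\dbZ)$-modules, under which $K_{ij}\leftrightarrow x_i^*\otimes(x_i\wedge x_j)$ and $K_{ijk}\leftrightarrow x_i^*\otimes(x_j\wedge x_k)$, I extend scalars to a field $F$ ($F=\dbQ$ if $A=\dbZ$, and $F=\Fp$ if $A=\dbZ/p$). Each $\Psi_{ij}(g):=\chi(g\cdot K_{ij})$ is then a regular function on the algebraic group $\SL_n$ over $F$ (the entries of $g^{-1}$ being polynomial in those of $g$ when $\det g=1$). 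The core claim is that each $\Psi_{ij}$ is not identically zero on $\SL_n(F)$; since $K_{12},K_{21},K_{34},K_{43}$ are related to each other by elements of $\SL_n(\dbZ)$ up to sign --- for instance $(1\;3)(2\;4)\in\SL_n(\dbZ)$ sends $K_{12}$ to $K_{34}$ --- it suffices to verify this for $\Psi_{12}$, i.e., to show that the $F[\SL_n(F)]$-submodule of $V_F^*\otimes\wedge^2 V_F$ generated by $K_{12}$ is not contained in $\ker\chi$.

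I would prove the stronger statement that this submodule equals the entire space $V_F^*\otimes\wedge^2 V_F$, by explicit transvection computations: the identity $E_{k1}(1)\cdot K_{12}-K_{12}=-K_{k,1,2}$ (for $k\geq 3$) places every $K_{k,1,2}$ in the span, and further applications of elementary transvections combined with even permutations produce every basis element $K_{ab}$ and $K_{abc}$ from $K_{12}$. Granting the claim, $\Psi:=\Psi_{12}\Psi_{21}\Psi_{34}\Psi_{43}$ is a nonzero regular function on the irreducible variety $\SL_n$. In characteristic zero ($A=\dbZ$), Zariski-density of $\SL_n(\dbZ)$ in $\SL_n(\bar\dbQ)$ produces $g\in\SL_n(\dbZ)$ with $\Psi(g)\neq 0$; lifting $g$ to $a^{-1}\in\Aut(F_n)$ via the surjection $\Aut(F_n)\to\GL_n(\dbZ)$ finishes the proof in this case.

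The main obstacle is the positive-characteristic case ($A=\dbZ/p$), where the Zariski-density argument fails because $\SL_n(\Fp)$ is a finite set on which a nonzero regular function on $\SL_n$ can in principle vanish identically. I would circumvent this by making the construction of $g$ explicit and uniform in $\chi$: parametrize a family of $g$'s by a few integer variables (for instance, compositions of transvections $E_{ij}(t)$ involving only the indices $\{1,2,3,4\}$ together with a fixed even permutation), express each $\chi(g\cdot K_{ij})$ as an $\Fp$-polynomial in these variables, and verify by a direct case analysis --- based on which type of basis element carries a nonzero value of $\chi$ --- that suitable integer substitutions in $\Fp$ make all four polynomials simultaneously nonzero, exploiting $n\geq 4$ to ensure enough disjoint indices to realize the four conditions independently.
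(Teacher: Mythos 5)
Your reduction to $G^{\mathrm{ab}}\cong V^*\otimes(V\wedge V)$ and the dualization step are essentially the paper's; the difference is how you propose to construct $g$. For the torsion-free case (target $A=\dbZ$) your Zariski-density argument is correct and is a genuinely cleaner route than the paper's: the $\dbC[\SL_n]$-submodule generated by $K_{12}$ is all of $V^*_{\dbC}\otimes(V_{\dbC}\wedge V_{\dbC})$ (over $\dbQ$ the contraction $V^*\otimes(V\wedge V)\to V$ splits, and $K_{12}$ has nonzero image in both summands, which are non-isomorphic irreducibles for $n\geq 3$), so each $\Psi_{ij}$ is a nonzero regular function; the product $\Psi$ is then a nonzero regular function on the irreducible variety $\SL_n$, and Zariski-density of $\SL_n(\dbZ)$ produces the required $g$. (A small correction: the transvection producing $K_{k12}$ from $K_{12}$ acts on the $e_1^*$-factor, so it is $E_{1k}$ in the paper's notation; your $E_{k1}(1)$ gives $\kappa_{1k2}=-K_{12k}$, a different basis vector.)

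The gap is the torsion case $A=\dbZ/p$, and this is the case the proposition is needed for in full strength: Proposition~\ref{prop:keyIAn} underlies Theorem~\ref{coh_main}(a), which concerns representations over arbitrary fields, so $\chi$ must be allowed to land in $\dbZ/p$. Your treatment there is only a plan, and as stated it would not go through: transvections confined to indices $\{1,2,3,4\}$ together with one fixed permutation cannot handle a $\chi$ supported entirely on basis vectors $\kappa_{abc}$ with $a,b,c\geq 5$, and the four conditions are genuinely entangled (the pairs $\{1,2\}$ and $\{2,1\}$ share both indices), which is exactly why the paper's Lemma~\ref{lemkey4} proceeds through four inductive steps with nested subcases. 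The paper in fact sidesteps the characteristic dichotomy altogether: it fixes a single $\lambda\in\Lambda=\Hom_{\dbZ}(\IA_n^{\mathrm{ab}},U)$ where $U$ is one abelian group containing every finitely generated abelian group, so $\Ker\lambda$ recovers $H$ exactly, and the actions of $E_{ij}^{\pm 1},F_{ij}$ on the coordinates $c_{abc}(\lambda)$ are purely integral recombinations (Observation~\ref{kappa_action}); the explicit four-step construction in the proof of Lemma~\ref{lemkey4} therefore works uniformly over $U$, with no case split. That construction is precisely the content you would still need to supply to close the $\dbZ/p$ case.
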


We will start by deducing Theorem~\ref{thm:degree2} from Proposition~\ref{prop:keyIAn}.

\begin{proof}[Proof of Theorem~\ref{thm:degree2}]
Given $a\in \Aut(F_n)$, define the homomorphism $\rho_{a}$ by $\rho_{a}(x)=\rho(a^{-1}x a)$.
Clearly, if $x_1,\ldots, x_t$ is a $\rho_{a}$-CG sequence in $G$, then $a^{-1}x_1 a, \ldots, a^{-1}x_t a$ is a $\rho$-CG sequence, so it is enough to construct a $\rho_{a}$-CG sequence (for some $a$). Since $\Ker\rho_{a}=a (\Ker\rho)a^{-1}$,
by letting $a$ be an element from the conclusion of Proposition~\ref{prop:keyIAn} with $H=\Ker\rho$ and replacing $\rho$ by $\rho_{a}$, we may assume that $\Ker\rho$ does not intersect the set $S=\{K_{12}, K_{21}, K_{34},K_{43}\}$. Since $\rho(G)$ is abelian, we conclude that $S\subseteq C_{\rho}(G)$.

Using relations (R1)-(R3) of 
Lemma~\ref{lem:rel}, it is easy to show that the only standard generators of $\IA_n$ which do not commute with an element of $S$ are $K_{134}, K_{234}, K_{312}, K_{412}$. However, by relations (R4)
each of these 4 elements commutes with an element of $S$ modulo the subgroup generated by all $K_{xy}$.
Thus, if we order the standard generators of $\IA_n$ so that $K_{12}, K_{34}, K_{21}, K_{43}$ come first (in this order)
and $K_{134}, K_{234}, K_{312}, K_{412}$ come last, then these generators form a $\rho$-CG sequence.
\end{proof}

\subsection{The abelianization of $\IA_n$}
\label{sec:ianab}

Since the quotient $\Aut(F_n)/\IA_n$ is isomorphic to $\GL_n(\dbZ)$, the abelianization $\IA_{n}^{ab}=\IA_n/[\IA_n,\IA_n]$ is a $\GL_n(\dbZ)$-module. 
Let $V=\dbZ^n$, considered as a standard $\GL_n(\dbZ)$-module, and let  $V^*=\Hom(V,\dbZ)$ be the dual module. It is well known that $\IA_{n}^{ab}$ is canonically isomorphic to $V^*\otimes (V\wedge V)$ as a $\GL_n(\dbZ)$-module; to the best of our knowledge, the first full proof of this isomorphism was given by Formanek~\cite{Fo} (for alternative references see, e.g., \cite[Theorem~6.1]{Kaw} or \cite[\S~5]{Bh}).

Let $e_1,\ldots, e_n$ be the standard basis of $V$,
and let $e_1^*,\ldots, e_n^*$ be the corresponding dual basis of $V^*$, and set $$\kappa_{ijk}=e_i^*\otimes (e_j\wedge e_k).$$
The above isomorphism between $\IA_n^{ab}$ and 
$V^*\otimes (V\wedge V)$ is given by 
\begin{equation}
\label{eq:kappa}
[K_{ij}]\mapsto\kappa_{iij}\mbox{ and }[K_{ijk}]\mapsto\kappa_{ijk}
\end{equation}
where $[x]$ denotes the image of $x\in \IA_n$ in $\IA_n^{ab}$. 

From now on we will identify $\IA_n^{ab}$ with $V^*\otimes (V\wedge V)$ via the map \eqref{eq:kappa}.
\vskip .12cm

Given $i,j\in\dbn$ with $i\neq j$, define $E_{ij}, F_{ij}\in \SL_n(\dbZ)$ by
\begin{equation}
\label{eij_def}
E_{ij}:\left\{\begin{array}{l}e_j\mapsto e_j+ e_i\\
e_k\mapsto e_k\mbox{ for }k\neq j\end{array}
\right.\qquad
F_{ij}:\left\{\begin{array}{l}e_i\mapsto -e_j\\
e_j\mapsto e_i\\
e_k\mapsto e_k\mbox{ for }k\neq i,j\end{array}
\right.
\end{equation}
Note that $F_{ij}=E_{ij}E_{ji}^{-1}E_{ij}$.

The following observation describes the action of $E_{ij}^{\pm 1}$ and $F_{ij}$ on the generators
$\kappa_{abc}$ of $\IA_n^{ab}$.
\begin{Observation}
\label{kappa_action} 
$\empty$
\begin{itemize}
\item [(a)] The following hold, where distinct letters denote distinct integers in $\dbn$:
\begin{align*}
&E_{ij}^{\pm 1}\kappa_{iic}-\kappa_{iic}= \mp\kappa_{jic}& &E_{ij}^{\pm 1}\kappa_{jji}-\kappa_{jji}= 0&\\
&E_{ij}^{\pm 1}\kappa_{ibc}-\kappa_{ibc}= \mp\kappa_{jbc}& &E_{ij}^{\pm 1}\kappa_{aaj}-\kappa_{aaj}= \pm\kappa_{aai}&\\
&E_{ij}^{\pm 1}\kappa_{ajc}-\kappa_{ajc}= \pm\kappa_{aic}& &E_{ij}^{\pm 1}\kappa_{iij}-\kappa_{iij}= \pm\kappa_{jji}&\\
&E_{ij}^{\pm 1}\kappa_{jjc}-\kappa_{jjc}= \pm\kappa_{jic}& &E_{ij}^{\pm 1}\kappa_{ijc}-\kappa_{ijc}= \pm(\kappa_{iic}-\kappa_{jjc})-\kappa_{jic}.&
\end{align*} 
\item [(b)] Let $i,j,a,b,c\in\dbn$ with $i\neq j$ and $b\neq c$. Then
\begin{itemize}
\item [(i)] $E_{ij}\kappa_{abc}=\kappa_{abc}$ if $i\neq a$ and $j\neq b,c$;
\item [(ii)] $F_{ij}\kappa_{abc}=\pm \kappa_{\sigma(a)\sigma(b)\sigma(c)}$ where $\sigma$ is the transposition $(i,j)$.
\end{itemize}
\end{itemize}
\end{Observation}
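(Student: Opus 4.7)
The approach is a direct computation using the $\GL_n(\dbZ)$-module identification $\IA_n^{ab} \cong V^* \otimes (V \wedge V)$ from \eqref{eq:kappa}, with $\GL_n(\dbZ)$ acting diagonally through the standard action on $V$ and the contragredient action on $V^*$. Under this identification $\kappa_{abc} = e_a^* \otimes (e_b \wedge e_c)$, so every identity in the statement reduces to computing how $E_{ij}^{\pm 1}$ or $F_{ij}$ acts on each tensor factor and then recombining.

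The plan is to first record two basic rules for $E_{ij}$: on $V^*$ the contragredient action fixes $e_a^*$ whenever $a \neq i$ and sends $e_i^* \mapsto e_i^* - e_j^*$ (this follows from $E_{ij}^{-1}(e_j) = e_j - e_i$ together with the definition $(g\cdot f)(v) = f(g^{-1}v)$); on $V \wedge V$ the action fixes $e_b \wedge e_c$ when $j \notin \{b,c\}$, sends $e_a \wedge e_j \mapsto e_a \wedge e_j + e_a \wedge e_i$ for $a \neq i,j$, and fixes $e_i \wedge e_j$ (since the extra term $e_i \wedge e_i$ vanishes). The corresponding rules for $E_{ij}^{-1}$ are identical with $+e_i$ replaced by $-e_i$ throughout. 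All eight identities in part (a) then follow by routine substitution; the only one requiring genuine care is the last, where one expands $(e_i^* - e_j^*) \otimes \bigl((e_i + e_j) \wedge e_c\bigr)$ and collects the four resulting $\kappa$-terms.

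Part (b)(i) is immediate: the hypotheses $a \neq i$ and $j \notin \{b,c\}$ say precisely that both basic rules for $E_{ij}$ fix the respective tensor factors of $\kappa_{abc}$. For (b)(ii), I would note that $F_{ij}$ sends $e_i \mapsto -e_j$, $e_j \mapsto e_i$ and fixes the remaining basis vectors; a short dual computation shows that on $V^*$ it sends $e_i^* \mapsto -e_j^*$, $e_j^* \mapsto e_i^*$, and fixes the rest. Hence on each factor of $V$ and $V^*$ the action has the form $e_\ell \mapsto \pm e_{\sigma(\ell)}$ for the transposition $\sigma = (i,j)$, and applying this to the three tensor factors of $\kappa_{abc}$ yields $\pm \kappa_{\sigma(a)\sigma(b)\sigma(c)}$.

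There is no genuine obstacle here, the statement being a bookkeeping lemma that records the explicit $\SL_n(\dbZ)$-action needed later in the paper. The only points requiring attention are being consistent about the contragredient convention on $V^*$ (so that the correct sign appears in $E_{ij}\cdot e_i^* = e_i^* - e_j^*$) and, in the mixed identity for $\kappa_{ijc}$, tracking the cross terms that arise from acting nontrivially on both tensor factors simultaneously.
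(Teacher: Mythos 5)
Your proposal is correct and follows essentially the same approach as the paper: derive the contragredient action of $E_{ij}^{\pm 1}$ and $F_{ij}$ on $V^*$ together with the standard action on $V\wedge V$, then verify each identity by substituting into $\kappa_{abc}=e_a^*\otimes(e_b\wedge e_c)$. The paper's proof records exactly these two formulas and leaves the verification to the reader, so your write-up is just a slightly more detailed version of the same computation.
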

\begin{proof}
The action of $E_{ij}$ and $F_{ij}$ on the basis $\{e_k^*\}$ on $V^*$ is given by
\begin{align*}
&E_{ij}:\left\{\begin{array}{l}e_i^*\mapsto e_i^*- e_j^*\\
e_k^*\mapsto e_k^*\mbox{ for }k\neq i\end{array}
\right.&
&F_{ij}:\left\{\begin{array}{l}e_i^*\mapsto -e_j^*\\
e_j^*\mapsto e_i^*\\
e_k^*\mapsto e_k^*\mbox{ for }k\neq i,j\end{array}
\right.&
\end{align*}
All parts of Observation~\ref{kappa_action} immediately follow from these formulas and \eqref{eij_def}. For instance,
$$E_{ij}^{\pm 1}\kappa_{iic}=E_{ij}^{\pm 1}(e_i^*\otimes (e_i\wedge e_c))=(e_i^*\mp e_j^*)\otimes (e_i\wedge e_c)=
\kappa_{iic}\mp \kappa_{jic}.$$ 
\end{proof}

\subsection{Proof of Proposition~\ref{prop:keyIAn}} 
\label{sec:proofian}
Let $U$ be any abelian group which contains an isomorphic copy of every finitely generated abelian group, and let
$$\Lambda=\Hom_{\dbZ}(\IA_n^{ab},U).$$
We will define the action of $\GL_n(\dbZ)$ on $\Lambda$ in the usual way:
$$(g\lam) (x)=\lam(g^{-1}x)\mbox{ for all }g\in \GL_n(\dbZ), \lam\in\Lambda\mbox{ and }x\in \IA_n^{ab}.$$
Our first goal is to reformulate Proposition~\ref{prop:keyIAn} in terms of the $\GL_n(\dbZ)$-module $\Lambda$ (see Lemma~\ref{lemkey4} below).
\vskip .1cm
Since $\IA_n^{ab}$ (and any of its quotients) is a finitely generated abelian group, 
every subgroup of $\IA_n^{ab}$ is the kernel of some element of $\Lambda$. 
Fix $H$, a proper subgroup of $\IA_n$ containing $[\IA_n,\IA_n]$, and choose $\lam\in \Lambda$
such that $\Ker\lam=H/[\IA_n,\IA_n]$. Note that $\lam\neq 0$ since $H$ is proper.

Recall that for an element $x\in \IA_n$ we denote by $[x]$ its image in $\IA_n^{ab}$. Given $a\in \Aut(F_n)$, let $\overline a$ be its image in $\GL_n(\dbZ)$.

\begin{Claim}
\label{claim:triv} Let $a\in \Aut(F_n)$ and $x\in \IA_n$.
Then $x\in a H a^{-1}$ if and only if $(\overline a\lam)([x])=0$.
\end{Claim}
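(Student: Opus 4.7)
The plan is to unwind the two conditions and observe that they reduce to the same statement after identifying the conjugation action of $\Aut(F_n)$ on $\IA_n^{ab}$ with the $\GL_n(\dbZ)$-module structure already in play.

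First, I would rewrite the membership condition. Since $\IA_n$ is normal in $\Aut(F_n)$ and $x \in \IA_n$, the element $a^{-1}xa$ lies in $\IA_n$, and we have $x \in aHa^{-1}$ if and only if $a^{-1}xa \in H$. Because $H \supseteq [\IA_n,\IA_n]$, the latter is equivalent to $[a^{-1}xa] \in H/[\IA_n,\IA_n] = \Ker \lam$, i.e.\ to $\lam([a^{-1}xa]) = 0$.

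Next, I would relate $[a^{-1}xa]$ to the $\GL_n(\dbZ)$-action on $\IA_n^{ab}$. The conjugation action of $\Aut(F_n)$ on its normal subgroup $\IA_n$ descends to an action on $\IA_n^{ab}$; because $\IA_n$ acts trivially on its own abelianization (conjugates of $g \in \IA_n$ by elements of $\IA_n$ differ from $g$ by commutators in $[\IA_n,\IA_n]$), this action factors through $\Aut(F_n)/\IA_n = \GL_n(\dbZ)$, and coincides with the canonical $\GL_n(\dbZ)$-module structure on $\IA_n^{ab} \cong V^* \otimes (V \wedge V)$ used throughout \S\ref{sec:ianab}. Therefore $[a^{-1}xa] = \overline{a}^{-1}\cdot [x]$.

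Finally, by the very definition of the action on $\Lam$, we have $\lam(\overline{a}^{-1}\cdot [x]) = (\overline{a}\lam)([x])$. Chaining the equivalences yields $x \in aHa^{-1} \iff (\overline{a}\lam)([x]) = 0$, as required. The only subtle point is step two — verifying that the conjugation action on $\IA_n^{ab}$ really factors through $\GL_n(\dbZ)$ and matches the standard module structure — but this is a direct consequence of the fact that inner automorphisms of $\IA_n$ act trivially on the abelianization. The rest is bookkeeping.
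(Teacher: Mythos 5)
Your proof is correct and takes essentially the same route as the paper: unwind the definition $(\overline a\lam)([x])=\lam(\overline a^{-1}[x])=\lam([a^{-1}xa])$, then translate $\lam([a^{-1}xa])=0$ back through $\Ker\lam=H/[\IA_n,\IA_n]$ to $x\in aHa^{-1}$. Your extra care in checking that the conjugation action descends to $\IA_n^{ab}$ and factors through $\GL_n(\dbZ)$ is the same identification the paper uses implicitly when writing $\overline a^{-1}[x]=[a^{-1}xa]$.
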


\begin{proof} 
By definition $(\overline a\lam)([x])=\lam({\overline a}^{-1}[x])=\lam([a^{-1} x a]).$ Thus, 
$$(\overline a\lam)([x])=0 \iff [a^{-1} x a]\in\Ker\lam=H/[\IA_n,\IA_n] \iff a^{-1} x a \in H \iff x\in aH a^{-1}.$$
\end{proof}
Given $i,j,k\in\dbn$ with $j\neq k$ define 
$$c_{ijk}(\lam)=\lam(\kappa_{ijk}).$$
In view of Claim~\ref{claim:triv} and \eqref{eq:kappa}, Proposition~\ref{prop:keyIAn} is equivalent to the following lemma:

\begin{Lemma}
\label{lemkey4}
Assume that $n \geq 4$. Then for any nonzero $\lam\in\Lambda$ there exists $g\in \GL_n(\dbZ)$ such that
$c_{112}(g\lam), c_{221}(g\lam), c_{334}(g\lam)$ and $c_{443}(g\lam)$ are all nonzero.
\end{Lemma}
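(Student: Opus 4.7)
My plan is to reduce the simultaneous nonvanishing of the four $c$-values to a nonvanishing statement on two rank-two sublattices of $M=V^*\otimes(V\wedge V)$, and then to finish using a pair of commuting $\GL_2(\dbZ)$-blocks.

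The first ingredient is that $\kappa_{112}$ generates $M$ as a $\dbZ[\GL_n(\dbZ)]$-module. This follows from Observation~\ref{kappa_action}: the identity $E_{ij}^{\pm 1}\kappa_{iic}-\kappa_{iic}=\mp\kappa_{jic}$ produces $\kappa_{j12}$ from $\kappa_{112}$ for every $j\neq 1,2$, and the signed permutations $F_{ij}$ then move these to every basis element $\kappa_{abc}$. Consequently, for any nonzero $\lam\in\Lambda$ and any fixed triple $(i,j,k)$ there exists $g\in\GL_n(\dbZ)$ with $c_{ijk}(g\lam)\neq 0$; the same holds by symmetry if $\kappa_{112}$ is replaced by $\kappa_{334}$.

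Next, let $G_{12},G_{34}\subset\GL_n(\dbZ)$ denote the commuting block subgroups isomorphic to $\GL_2(\dbZ)$ acting nontrivially on $\{e_1,e_2\}$ and $\{e_3,e_4\}$ respectively and trivially on the remaining basis vectors. A direct calculation shows that $G_{12}$ fixes $L_{34}:=\dbZ\kappa_{334}\oplus\dbZ\kappa_{443}$ pointwise, preserves $L_{12}:=\dbZ\kappa_{112}\oplus\dbZ\kappa_{221}$, and acts on $L_{12}$ through the standard $\GL_2(\dbZ)$-representation on $\dbZ^2$, with the symmetric statements holding for $G_{34}$. A short calculation with the transvection $g=\left(\begin{smallmatrix}1 & 0\\ k & 1\end{smallmatrix}\right)$ then yields: any $\mu\in\Lambda$ with $\mu|_{L_{12}}\neq 0$ and $\mu|_{L_{34}}\neq 0$ can be moved by an element of $G_{12}\cdot G_{34}$ to a functional with all four target $c$-values nonzero (the key input being that if $(A,B)\in U^2$ has $A\neq 0$ then the transform gives $(A,kA+B)$, and some $k\in\dbZ$ makes the second coordinate nonzero). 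It therefore suffices to produce $g_0\in\GL_n(\dbZ)$ with both $(g_0\lam)|_{L_{12}}\neq 0$ and $(g_0\lam)|_{L_{34}}\neq 0$ simultaneously.

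This simultaneous nonvanishing is the main obstacle. After using the first ingredient to assume $c_{112}(\lam)\neq 0$, the only remaining case is $\lam|_{L_{34}}=0$. The stabilizer of both $\kappa_{112}$ and $\kappa_{221}$ in $\GL_n(\dbZ)$ is $\{I_2\}\times\GL_{n-2}(\dbZ)$, whose orbit of $\kappa_{334}$ generates the submodule $(V')^*\otimes(V'\wedge V')$ for $V'=\langle e_3,\ldots,e_n\rangle$. If $\lam$ is nonzero somewhere on this submodule, a suitable element of the stabilizer supplies the required $g_0$ without disturbing the value at $\kappa_{112}$ or $\kappa_{221}$. Otherwise, I resort to an off-block elementary matrix such as $E_{13}^k$: using Observation~\ref{kappa_action} one computes, for $g=E_{13}^k$, the values $(g\lam)(\kappa_{334})=-k\lam(\kappa_{314})$ (using $\lam(\kappa_{334})=0$) and $(g\lam)(\kappa_{112})=\lam(\kappa_{112})+k\lam(\kappa_{312})$; provided $\lam(\kappa_{314})\neq 0$, a suitable $k\neq 0$ makes both of these nonzero. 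The remaining degenerate case, in which $\lam$ vanishes on every such bridging triple, is handled by an elementary case analysis that appeals once more to the generation property of $\kappa_{334}$ to exhibit a more elaborate perturbation of the desired form.
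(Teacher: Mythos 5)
Your first ingredient (that $\kappa_{112}$ generates $\IA_n^{ab}$ as a $\dbZ[\GL_n(\dbZ)]$-module, hence one can arrange $c_{112}(g\lambda)\neq 0$) and your final transvection step (using the commuting blocks $G_{12}$, $G_{34}$ to pass from $\mu|_{L_{12}}\neq 0$ and $\mu|_{L_{34}}\neq 0$ to all four target values nonzero) are both correct, and the latter essentially coincides with the paper's Final Step ($g=E_{21}^{\alpha}E_{43}^{\beta}$). The genuine gap is in the middle: arranging $\mu|_{L_{12}}\neq 0$ \emph{and} $\mu|_{L_{34}}\neq 0$ simultaneously is exactly the content of the paper's Steps~2, 3 and 4, and your sketch does not actually establish it.

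Concretely, your off-block move $E_{13}^k$ is not sufficient even when $\lambda(\kappa_{314})\neq 0$. The target group $U$ contains $2$-torsion, and if $\lambda(\kappa_{112})=\lambda(\kappa_{312})=b$ with $2b=0$ and $\lambda(\kappa_{314})=a$ with $2a=0$, then for odd $k$ one has $\lambda(\kappa_{112})+k\lambda(\kappa_{312})=(1+k)b=0$, while for even $k$ one has $k\lambda(\kappa_{314})=ka=0$; so no integer $k$ makes both $(E_{13}^{k}\lambda)(\kappa_{112})$ and $(E_{13}^{k}\lambda)(\kappa_{334})$ nonzero, contradicting the claim that ``a suitable $k\neq 0$'' always exists. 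More fundamentally, the ``remaining degenerate case'' that you defer to ``an elementary case analysis'' is not small: having arranged $c_{112}(\lambda)\neq 0$, the only further constraints you have imposed are that $\lambda$ vanish on $L_{34}$, on $(V')^*\otimes(V'\wedge V')$, and on $\kappa_{314}$; that leaves a large remaining configuration space, and torsion phenomena of the above kind persist throughout. The casework needed to close this gap is precisely the nontrivial heart of the lemma, which the paper carries out explicitly in Steps~2--4 by systematically tracking $c_{113}$, $c_{221}$, $c_{223}$, $c_{123}$, $c_{413}$, $c_{224}$ along a short chain of $E_{ij}$-moves. As written, the proposal replaces the hard part with a promissory note and one step that is false as stated.
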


The following two results follow directly from Observation~\ref{kappa_action}. We will use Observation~\ref{obs:coeff} in 
the proof of Lemma~\ref{lemkey4}  without an explicit reference.

\begin{Observation}
\label{obs:coeff}
Let $i,j,a\in\dbn$ be distinct and $\lam\in \Lambda$. Then
\begin{align*}
&c_{aaj}(E_{ij}\lambda) = c_{aaj}(\lambda) - c_{aai}(\lambda)&
&c_{iij}(E_{ij}\lambda)  = c_{iij}(\lambda) - c_{jji}(\lambda)&\\
& c_{jja}(E_{ij}\lambda) = c_{jja}(\lambda) - c_{jia}(\lambda)&
& c_{iia}(E_{ij}\lambda)  = c_{iia}(\lambda) + c_{jia}(\lambda)&
\end{align*}
and $c_{xxy}(E_{ij}\lambda)=c_{xxy}(\lambda)$ for all $(x,y)\neq (a,j), (i,j),(j,a),(i,a)$.
\end{Observation}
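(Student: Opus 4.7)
The plan is to reduce everything to a direct application of Observation~\ref{kappa_action} by unpacking the definitions. Recall that the $\GL_n(\dbZ)$-action on $\Lambda=\Hom_{\dbZ}(\IA_n^{ab},U)$ is $(g\lam)(x)=\lam(g^{-1}x)$, and $c_{xyz}(\lam)=\lam(\kappa_{xyz})$. Hence
\[
c_{xxy}(E_{ij}\lam)=(E_{ij}\lam)(\kappa_{xxy})=\lam(E_{ij}^{-1}\kappa_{xxy}),
\]
so each of the claimed identities reduces to an identity in $\IA_n^{ab}$ expressing $E_{ij}^{-1}\kappa_{xxy}$ as an integer combination of the generators $\kappa_{\bullet\bullet\bullet}$, followed by evaluation at $\lam$. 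All such identities are already contained in Observation~\ref{kappa_action}; what remains is to select the correct row and specialize with the lower sign, since we are applying $E_{ij}^{-1}$ rather than $E_{ij}$.

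For the four displayed formulas I would substitute directly. Taking the lower sign in Observation~\ref{kappa_action}(a) yields
\begin{align*}
E_{ij}^{-1}\kappa_{aaj} &= \kappa_{aaj}-\kappa_{aai}, &
E_{ij}^{-1}\kappa_{iij} &= \kappa_{iij}-\kappa_{jji},\\
E_{ij}^{-1}\kappa_{jja} &= \kappa_{jja}-\kappa_{jia}, &
E_{ij}^{-1}\kappa_{iia} &= \kappa_{iia}+\kappa_{jia}.
\end{align*}
Evaluating $\lam$ on each side gives the four stated transformation rules.

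For the final assertion, in the context of the fixed triple $\{i,j,a\}$ the pairs $(x,y)$ with $x\ne y$ that are \emph{not} in the excluded list are $(j,i)$ and $(a,i)$. The identity $E_{ij}^{\pm1}\kappa_{jji}-\kappa_{jji}=0$ in Observation~\ref{kappa_action}(a) handles $(j,i)$. For $(a,i)$, I would apply Observation~\ref{kappa_action}(b)(i) to $\kappa_{aai}$ with $(a,b,c)=(a,a,i)$: the hypotheses $i\ne a$ and $j\notin\{a,i\}$ both hold, so $E_{ij}\kappa_{aai}=\kappa_{aai}$, hence $E_{ij}^{-1}\kappa_{aai}=\kappa_{aai}$, and therefore $c_{aai}(E_{ij}\lam)=c_{aai}(\lam)$.

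The only care required is a small amount of bookkeeping: pairing each target coefficient with the correct line of Observation~\ref{kappa_action}(a) and consistently using the lower sign in each $\pm$/$\mp$ statement because the operator is $E_{ij}^{-1}$. There is no substantive obstacle.
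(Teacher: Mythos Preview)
Your proof is correct and is exactly the approach the paper takes: the paper simply states that the observation ``follows directly from Observation~\ref{kappa_action},'' and you have spelled out the bookkeeping of applying that observation with the lower sign via $(E_{ij}\lam)(\kappa_{xxy})=\lam(E_{ij}^{-1}\kappa_{xxy})$.

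One small remark on the final clause: you interpret the remaining pairs $(x,y)$ as lying in $\{i,j,a\}$, which leaves only $(j,i)$ and $(a,i)$. The statement is more naturally read with $a$ standing for any index distinct from $i,j$ and $(x,y)$ ranging over all pairs in $\dbn$, so there is one further case you did not list, namely $x,y\notin\{i,j\}$; but this too is immediate from Observation~\ref{kappa_action}(b)(i) since then $i\neq x$ and $j\notin\{x,y\}$.
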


\begin{Observation}
\label{obs:coeff2} Let $i,j\in\dbn$ with $i\neq j$ and let $\sigma$ be the transposition $(i,j)$. Then
for any $\lam\in \Lambda$ and $x,y,z\in\dbn$ we have $c_{xyz}(F_{ij}\lambda)=\pm c_{\sigma(x)\sigma(y)\sigma(z)}(\lam)$.
\end{Observation}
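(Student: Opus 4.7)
The plan is to unwind the definitions directly. By the convention $(g\lam)(v)=\lam(g^{-1}v)$ used to define the $\GL_n(\dbZ)$-action on $\Lambda$, we have
$$c_{xyz}(F_{ij}\lam)=(F_{ij}\lam)(\kappa_{xyz})=\lam(F_{ij}^{-1}\kappa_{xyz}),$$
so the task reduces to computing $F_{ij}^{-1}\kappa_{xyz}$ in terms of the basis $\{\kappa_{abc}\}$ and then applying $\lam$.

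First I would note that $F_{ij}^{-1}=F_{ji}$, which is immediate from \eqref{eij_def}: the map $F_{ji}$ sends $e_i\mapsto e_j$ and $e_j\mapsto -e_i$ (fixing the remaining basis vectors), precisely inverting $F_{ij}$. Next, since the transposition $\sigma=(i,j)$ is the same permutation as $(j,i)$, Observation~\ref{kappa_action}(b)(ii) applied with the roles of $i$ and $j$ swapped gives
$$F_{ji}\kappa_{xyz}=\pm\kappa_{\sigma(x)\sigma(y)\sigma(z)}.$$
Substituting back yields
$$c_{xyz}(F_{ij}\lam)=\lam\bigl(\pm\kappa_{\sigma(x)\sigma(y)\sigma(z)}\bigr)=\pm c_{\sigma(x)\sigma(y)\sigma(z)}(\lam),$$
which is the desired identity.

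I do not anticipate any real obstacle here: once Observation~\ref{kappa_action}(b)(ii) is in hand, the present statement is a purely formal consequence of the contragredient definition of the $\GL_n(\dbZ)$-action on $\Lambda$. The only subtlety worth flagging is that one must evaluate $F_{ij}^{-1}\kappa_{xyz}$ rather than $F_{ij}\kappa_{xyz}$; but because $(i,j)$ is an involution, passing to the inverse only potentially flips the implicit sign, which is harmlessly absorbed in the $\pm$ appearing in the conclusion. (If one preferred, the same $\pm$ identity could be obtained by computing $F_{ij}^{-1}\kappa_{xyz}$ from scratch using the explicit action of $F_{ij}^{-1}$ on the bases $\{e_k\}$ and $\{e_k^*\}$, exactly as in the proof of Observation~\ref{kappa_action}.)
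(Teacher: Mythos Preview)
Your proof is correct and is exactly the approach the paper intends: the paper simply states that Observation~\ref{obs:coeff2} follows directly from Observation~\ref{kappa_action}, and you have spelled out that deduction using the contragredient action and part~(b)(ii). There is nothing to add.
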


\begin{proof}[Proof of Lemma~\ref{lemkey4}] 
\underline{Step 1}: There exists $g_1\in \SL_n(\dbZ)$ such that $c_{112}(g_1\lambda)\neq 0$.

First of all, by Observation~\ref{obs:coeff2} it suffices to find distinct $a,b\in\dbn$
such that $c_{aab}(g_1\lam)\neq 0$. 

If $c_{xxy}(\lambda) \neq 0$ for some $x\neq y$, we are done. 
Suppose now that $c_{xxy}(\lambda)= 0$ for all
$x\neq y$. Then there must exist distinct distinct $a,b,c$ with $c_{abc}(\lambda)\neq 0$, in which case 
$$c_{aab}(E_{ca}\lambda) = c_{aab}(\lambda) - c_{acb}(\lambda) = 0 + c_{abc}(\lambda) \neq 0.$$

\underline{Step 2}: There exists $g_2\in \SL_n(\dbZ)$ such that $c_{112}(g_2\lambda),c_{113}(g_2\lambda)\neq 0$.

By Step 1, we can assume that $c_{112}(\lambda)\neq 0$.
If $c_{113}(\lambda)\neq 0$, we are done. And if $c_{113}(\lambda)=0$, then
$c_{113}(E_{23}\lambda)= c_{113}(\lambda) - c_{112}(\lambda) = 0 - c_{112}(\lambda)  \neq 0$  
and $c_{112}(E_{23}\lambda) = c_{112}(\lambda) \neq 0.$\\

\underline{Step 3}: There exists $g_3\in \SL_n(\dbZ)$ such that either $c_{112}(g_3\lambda), c_{113}(g_3\lambda), c_{221}(g_3\lambda)\neq 0$ or $c_{112}(g_3\lambda), c_{334}(g_3\lambda)\neq 0$.

By Step 2, we can assume that $c_{112}(\lambda),c_{113}(\lambda)\neq 0$. If 
$c_{224}(\lambda)\neq 0$ or $c_{221}(\lambda)\neq 0$, we are done (using Observation~\ref{obs:coeff2} in the former case),
so assume that $c_{224}(\lambda)=c_{221}(\lambda)= 0$. We consider 3 cases.\\

\underline{Case 1}: $c_{223}(\lambda) \neq 0$.
Then \[ c_{224}(E_{34}\lambda) = c_{224}(\lambda) - c_{223}(\lambda) = -c_{223}(\lambda) \neq 0;\]
\[ c_{113}(E_{34}\lambda) = c_{113}(\lambda) \neq 0.\]
Hence $c_{113}(E_{34}\lambda), c_{224}(E_{34}\lambda) \neq 0$.\\

\underline{Case 2}: $c_{223}(\lambda)= 0$ and $c_{123}(\lambda)= 0$.
Then \[ c_{112}(E_{21}\lambda)=c_{112}(\lambda)\neq 0;\]
\[ c_{221}(E_{21}\lambda) = c_{221}(\lambda) - c_{112}(\lambda)= - c_{112}(\lambda) \neq 0;\]
\[ c_{113}(E_{21}\lambda) = c_{113}(\lambda) - c_{123}(\lambda)= - c_{123}(\lambda)\neq 0. \] 

\underline{Case 3}: $c_{223}(\lambda)= 0$ and $c_{123}(\lambda) \neq 0$.

Again we have $c_{112}(E_{21}\lambda)\neq 0$ and $c_{221}(E_{21}\lambda)\neq 0$, and this time
\[ c_{223}(E_{21}\lambda) = c_{223}(\lambda) + c_{123}(\lambda) = c_{123}(\lambda) \neq 0.\]
Hence we are done by Observation~\ref{obs:coeff2}.\\

\underline{Step 4}: There exists $g_4\in \SL_n(\dbZ)$ such that $c_{112}(g_4\lambda), c_{334}(g_4\lambda)\neq 0$.

By Step 3, we can assume that $c_{112}(\lam)\neq 0$, $c_{221}(\lam)\neq 0$ and $c_{113}(\lam)\neq 0$.
If $c_{443}(\lam)\neq 0$ or $c_{224}(\lambda)\neq 0$, we are done (by Observation~\ref{obs:coeff2}). 

So assume that $c_{443}(\lam)=c_{224}(\lam)=0$.
Then \[ c_{224}(E_{14}\lambda) = c_{224}(\lambda) - c_{221}(\lambda)= - c_{221}(\lambda) \neq 0.\]

\underline{Case 1:} $c_{413}(\lambda)= 0$. Then $c_{113}(E_{14}\lambda) = c_{113}(\lambda) + c_{413}(\lambda)\neq 0$, so
$c_{113}(E_{14}\lambda)\neq 0$ and $c_{224}(E_{14}\lambda) \neq 0$.

\underline{Case 2:} $c_{413}(\lambda) \neq 0$. Then $c_{443}(E_{14}\lambda) = c_{443}(\lambda) - c_{413}(\lambda) 
= - c_{413}(\lambda) \neq 0$ and $c_{221}(E_{14}\lambda)=c_{221}(\lambda)\neq 0$, 
so $c_{443}(E_{14}\lambda)\neq 0$ and $c_{221}(E_{14}\lambda) \neq 0$.

In both cases we are done by Observation~\ref{obs:coeff2}.\\

\underline{Final Step}. By Step~4, we can assume that $c_{112}(\lambda), c_{334}(\lambda)\neq 0$.
Set $\alpha=0$ if $c_{221}(\lam)\neq 0$ and $\alpha=1$ if $c_{221}(\lam)= 0$ and
$\beta=0$ if $c_{443}(\lam)\neq 0$ and $\beta=1$ if $c_{443}(\lam)= 0$.
Then $g=E_{21}^{\alpha}E_{43}^{\beta}$ satisfies the assertion of Lemma~\ref{lemkey4}.
\end{proof}

It is not hard to modify the above proof of Lemma~\ref{lemkey4} to obtain the following generalization:

\begin{Lemma}
\label{lemkey5}
Let $r\geq 2$ be an integer and $n \geq 2r$. Then for any nonzero $\lam\in\Lambda$ there exists $g\in \SL_n(\dbZ)$ such that
$c_{2i-1,2i-1,2i}(g\lam)$ and $c_{2i,2i,2i-1}(g\lam)$ are nonzero for all $1\leq i\leq r$.
\end{Lemma}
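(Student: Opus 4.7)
The plan is to generalize the four-stage structure of the proof of Lemma~\ref{lemkey4} from two pairs of indices to $r$ pairs.

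First, I would apply Step~1 from the proof of Lemma~\ref{lemkey4}, followed by a suitable transposition $F_{ij}$ (Observation~\ref{obs:coeff2}), to produce $g_1 \in \SL_n(\dbZ)$ with $c_{1,1,2}(g_1\lam)\neq 0$. Next, for each $i=2,\ldots,r$, I would apply $E_{2,2i}^{\pm 1}$ with an appropriate sign. A routine check of Observation~\ref{obs:coeff} shows that $E_{2,2i}$ alters $c_{1,1,2k}$ only when $k=i$, and changes it by $\mp c_{1,1,2}(\lam)\neq 0$. Iterating this propagation gives $g_2$ with $c_{1,1,2i}(g_2\lam)\neq 0$ for all $i=1,\ldots,r$.

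The main step (the ``transport'' step) is to move each $c_{1,1,2i}$ into a nonzero $c_{2i-1,2i-1,2i}$. I would proceed iteratively for $i=2,3,\ldots,r$, at each stage acting by an element $g_i$ of the embedded copy of $\SL_2(\dbZ) \leq \SL_n(\dbZ)$ acting on $\mathrm{span}(e_1,e_{2i-1})$ and fixing the remaining basis vectors. For any external index $c\notin\{1,2i-1\}$, the $2\times 2$ matrix $M^{(c)}=(c_{a,b,c})_{a,b\in\{1,2i-1\}}$ transforms under $g_i$ by conjugation $M^{(c)}\mapsto g_i M^{(c)}g_i^{-1}$. I would choose $g_i$ to satisfy both (a) $(g_iM^{(2i)}g_i^{-1})_{22}\neq 0$, producing the desired $c_{2i-1,2i-1,2i}\neq 0$; and (b) $(g_iM^{(2j)}g_i^{-1})_{11}\neq 0$ for every $j\in\{1,\ldots,r\}\setminus\{i\}$, preserving $c_{1,1,2j}\neq 0$ both for later iterations and for the $i=1$ case. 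Each such condition defines a proper subset of $\SL_2$: condition (a) does not vanish at $g_i=F_{1,2i-1}$ (since $(FMF^{-1})_{22}=M_{11}$), and condition (b) does not vanish at $g_i=I$. The existence of a single $g_i\in\SL_2(\dbZ)$ satisfying all $r$ conditions simultaneously then follows from the Zariski density of $\SL_2(\dbZ)$ in $\SL_2$ (or, alternatively, by an explicit case analysis along the one-parameter families $E_{12}^k$, $E_{21}^k$, and their composites). Since $g_i$ fixes $e_k$ for $k\notin\{1,2i-1\}$, and the indices of $c_{2i'-1,2i'-1,2i'}$ for $2\le i'<i$ are disjoint from $\{1,2i-1\}$, the conditions established in previous iterations are automatically preserved.

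Finally, for each $i=1,\ldots,r$, apply $E_{2i,2i-1}^{\pm 1}$ with a suitable sign. By Observation~\ref{kappa_action}, $\kappa_{2i-1,2i-1,2i}$ is fixed by $E_{2i,2i-1}$, so $c_{2i-1,2i-1,2i}$ is preserved, while $c_{2i,2i,2i-1}$ changes by $\mp c_{2i-1,2i-1,2i}(\lam)\neq 0$. A routine check of indices shows that the operations corresponding to distinct values of $i$ do not interact on the coefficients we care about, so all these flips can be performed independently. The hard part is the transport step: finding a single $g_i\in\SL_2(\dbZ)$ that meets $r$ nonvanishing conditions simultaneously. The density argument is what circumvents an otherwise delicate case analysis on the possible configurations of the matrices $M^{(2j)}$.
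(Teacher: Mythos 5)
Your transport step has a genuine gap, and it is not merely a matter of rigor: the restriction to the copy of $\SL_2(\dbZ)$ acting on $\mathrm{span}(e_1,e_{2i-1})$ can actually fail, so neither the Zariski-density argument nor the alternative case analysis you mention can close it. The root issue is that Zariski density of $\SL_2(\dbZ)$ in $\SL_2$ is a statement about nonvanishing of polynomials with values in a field, whereas here the quantities $c_{abc}(\lam)$ live in the abstract abelian group $U$, which has torsion. A finite collection of ``not identically zero'' polynomial conditions valued in a torsion group need not have a common nonvanishing point, and your setup realizes exactly this obstruction.

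Here is an explicit counterexample. Take $r=2$, $n=4$, fix an element $\bar 1\in U$ of order $2$, and define $\lam\in\Lambda$ on the free basis $\{\kappa_{abc}\}$ of $\IA_4^{ab}$ by $\lam(\kappa_{112})=\lam(\kappa_{114})=\bar 1$ and $\lam=0$ on all other basis elements. Your Steps 1 and 2 are vacuous here since $c_{112}(\lam)$ and $c_{114}(\lam)$ are already nonzero. In the transport step for $i=2$ you seek $g$ in the $\SL_2(\dbZ)$ acting on $\mathrm{span}(e_1,e_3)$ with $(gM^{(4)}g^{-1})_{22}\neq 0$ and $(gM^{(2)}g^{-1})_{11}\neq 0$. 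For this $\lam$, both $M^{(4)}$ and $M^{(2)}$ equal $\bigl(\begin{smallmatrix}\bar 1&0\\0&0\end{smallmatrix}\bigr)$. Writing $g=\bigl(\begin{smallmatrix}\alpha&\beta\\\gamma&\delta\end{smallmatrix}\bigr)$, a direct computation (which you have essentially already done) gives $(gM^{(4)}g^{-1})_{22}=-\beta\gamma\,\bar 1$ and $(gM^{(2)}g^{-1})_{11}=\alpha\delta\,\bar 1$. Since $\bar 1$ has order $2$, both are nonzero iff $\alpha\delta$ and $\beta\gamma$ are both odd, which is impossible because $\alpha\delta-\beta\gamma=1$. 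So condition (a) holds at $g=F_{13}$ and condition (b) holds at $g=I$, exactly as you observe, but no single $g$ in this $\SL_2$ (in fact no $g$ in $\SL_2(\dbR)$) satisfies both. The lemma is nevertheless true for this $\lam$ --- it is just Lemma~\ref{lemkey4} for $n=4$, whose proof in the paper mixes elementary matrices $E_{ij}$ across all four indices rather than confining to a $2\times 2$ block --- so the obstruction is to your strategy, not to the statement. To repair the argument you would have to enlarge the group used in the transport step (bringing the even indices into play) and, in the end, there appears to be no substitute for a case analysis in the spirit of the proof of Lemma~\ref{lemkey4}, precisely because $U$ has torsion and field-theoretic genericity arguments do not carry over.
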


Using Lemma~\ref{lemkey5} instead of Lemma~\ref{lemkey4} in the proof of Theorem~\ref{thm:degree2}, it is easy to see that if $n\geq 6$, one can construct a $\rho$-CG sequence in $G=\IA_n$ by using only the relations (R1)-(R3). We will not provide the details of this argument. However, in \S~6
we will prove Theorem~\ref{thm:IAnrcg} which generalizes Theorem~\ref{thm:degree2} for $n\geq 12$ and whose proof will use even fewer relations,
namely only relations (R1)-(R3) with $a_i\neq b_j$ for all $i,j$.

\section{$\dbn$-groups and $\rho$-CG sequences}

\subsection{$\dbn$-groups}
\label{sec:ngps}

Recall that given $n\in\dbN$, we denote by $\dbn$ the set $\{1,2,\ldots, n\}$. 

\begin{Definition}\rm Let $n\in\dbN$. An {\it $\dbn$-group} is a group $G$ endowed with a collection of subgroups $\{G_I\}_{I\subseteq \dbn}$ such that
\begin{itemize}
\item[(i)] $G_{\dbn}=G$;
\item[(ii)] $G_I\subseteq G_{J}$ whenever $I\subseteq J$.
\end{itemize}
Given $d\in\dbN$, we will say that $G$ is {\it generated in degree $d$} if $G=\la G_I: |I|=d\ra$.

We will say that $G$ is a {\it commuting}\footnote{The authors thank Thomas Church and Andrew Putman for suggesting this terminology.} $\dbn$-group if $G_I$ and $G_J$ commute (elementwise) whenever $I$ and $J$ are disjoint.
\end{Definition}

Note that if $G$ is an $\dbn$-group, then any subgroup $H$ of $G$ becomes an $\dbn$-group if we set $H_I=H\cap G_I$. Clearly, if $G$ is commuting, then
so is $H$.

A simple example of a commuting $\dbn$-group is $G=\GL_n(\dbZ)$ with $\{G_I\}$ defined as follows.
Let $e_1,\ldots, e_n$ be the standard basic of $\dbZ^n$. Given $I\subseteq \dbn$, let $\dbZ^I=\oplus_{i\in I}\dbZ e_i$, and let 
\begin{equation}
\label{eq:SLnsubgps}
G_I=\{g\in G: g(\dbZ^I)\subseteq \dbZ^I\mbox{ and } g(e_j)=e_j\mbox{ for all }j\not\in I\}.
\end{equation}

Very similarly we can turn $G=\Aut(F_n)$ into a commuting $\dbn$-group. Choose a free generating set $x_1,\ldots, x_n$, and
for each $I\subseteq\dbn$ let $G_I$ be the subgroup consisting of automorphisms which leave the subgroup $\la x_i: i\in I\ra$ invariant and
fix $x_j$ for every $j\not\in I$. 

 In the sequel we will not encounter the $\dbn$-group structure on $\Aut(F_n)$ itself, but we will use the induced structure on $\IA_n$. Since $\IA_n$ is generated by the automorphisms $K_{ij}$ and $K_{ijk}$ defined in \S~\ref{sec:ian}, it is clear that $\IA_n$ is generated in degree $3$ as an $\dbn$-group. 

The fact that $\IA_n$ is a commuting $\dbn$-group captures some of the relations (R1)-(R3) from Lemma~\ref{lem:rel} (namely those with $a_i\neq b_j$
for all $i,j$) while the remaining relations (R1)-(R3) and the relations (R4) use more delicate properties of $\IA_n$ that seem hard to generalize to other groups.

As we will explain in \S~\ref{sec:torelli}, the Torelli group $\calI_n^1$ has an analogous structure of an $\dbn$-group generated in degree 
$3$, but that group is not {\it commuting}. Fortunately, $\calI_n^1$ is still {\it partially commuting} and {\it weakly commuting} according to the 
following definition (see \S~\ref{sec:torelli}).

\begin{Definition}\rm An $\dbn$-group $G$ is called 
\begin{itemize}
\item[(i)] \emph{partially commuting} if $G_I$ and $G_J$ commute whenever $I$ and $J$ are disjoint and $I$ consists of consecutive integers
(that is, $I=\{k,k+1,\ldots, k+|I|-1\}$ for some $k\in\dbn$);
\item[(ii)] \emph{weakly commuting} if for any disjoint $I$ and $J$ there exists $g\in G$ such that $G_I^g=g^{-1}G_I g$ commutes with $G_J$.
\end{itemize}
\end{Definition} 
The fact that $\calI_n^1$ is partially commuting will be essential for the proofs of the main theorems,  
while the weak commutativity of $\calI_n^1$ will play a secondary role (it will not be used in the proof of Theorem~\ref{OK} and could be eliminated from the proof of Theorem~\ref{coh_main}(c) at the expense of slightly increasing the lower bound on $n$). By contrast, in \cite{CEP} weak commutativity will play a central role, while partial commutativity will not be used at all.

\subsection{Constructing $\rho$-CG sequences in partially commuting $\dbn$-groups}
\label{sec:proofsketch}

In this subsection we fix an $\dbn$-group $G$ and an integer $m\geq 1$, and denote by $\Hom_m(G)$ the class of all homomorphisms $\rho$ from $G$ to another group such that $\rho(G)$ is {\it nilpotent of class $m$}, that is, $\gamma_{m+1} \rho(G)=\{1\}$ while $\gamma_{m} \rho(G)\neq \{1\}$
(equivalently, $\gamma_{m+1}G\subseteq \Ker\rho$, but $\gamma_{m}G\not\subseteq \Ker\rho$).
\vskip .07cm

Our goal is to find a sufficient condition on $G$ and $m$ that ensures the existence of a $\rho$-CG sequence for any $\rho\in \Hom_m(G)$. The precise formulation of this condition involves the notion of a regular $\SL_n(\dbZ)$-module and will be given in \S~\ref{sec:maintech}. We will use the proof of Theorem~\ref{thm:degree2} as a general guide to finding the suitable 
restrictions on $G$ and $m$.
\vskip .07cm

Let $L(m)=\gamma_m G/\gamma_{m+1}G$. As in \S~\ref{sec:proofian}, let $U$ be any abelian group which contains an isomorphic copy of every finitely generated abelian group, and let
$$\Lambda_m=\Hom_{\dbZ}(L(m),U).$$
Since $G$ is a finitely generated group, $L(m)$ is a finitely generated abelian group, whence
every subgroup of $L(m)$ is the kernel of some element of $\Lambda_m$. 

Now take any $\rho\in \Hom_m(G)$. 
Then $\gamma_{m+1}G\subseteq \Ker\rho$ while $\gamma_{m}G\not\subseteq \Ker\rho$, so
the subgroup $\Ker\rho\cap \gamma_m G$
contains $\gamma_{m+1}G$ and is strictly contained in $\gamma_m G$. Thus we can find a nonzero element $\lam_{\rho}\in \Lambda_m$
such that $$\Ker\lam_{\rho}=(\Ker\rho\cap \gamma_m G)/\gamma_{m+1} G.$$

For each $I\subseteq \dbn$ let $L(m)_I$ be the image of $\gamma_m G\cap G_I$ in $L(m)$.
The following technical definition will be very convenient for the subsequent discussion.

\begin{Definition}\rm Let $d\in\dbN$. An element $\lam\in\Lambda_m$ will be called
\emph{$d$-generic} if there exist disjoint subsets $I_1,\ldots, I_{d+1}$ of $\dbn$ each of which consists of consecutive integers 
such that $\lam$ does not vanish on $L(m)_{I_k}$ for each $1\leq k\leq d+1$.
\end{Definition}
\begin{Remark}\rm Note that whether a given $\lam\in\Lambda_m$ is $d$-generic is completely determined by $\Ker\lam$.
\end{Remark}

\begin{Lemma}
\label{lem:key}
Let $G$ be a partially commuting $\dbn$-group generated in degree $d$,
and let $\rho\in \Hom_m(G)$ be such that $\lam_{\rho}$ is $d$-generic.
Then $G$ admits a $\rho$-CG sequence.
\end{Lemma}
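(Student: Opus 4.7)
The plan is to construct pairwise commuting elements $g_1,\ldots,g_{d+1}\in C_{\rho}(G)$ whose centralizers jointly generate $G$, after which Lemma~\ref{centr_trivial} immediately delivers a $\rho$-CG sequence.

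First, I would use the $d$-generic hypothesis to produce such central elements. By assumption there exist pairwise disjoint subsets $I_1,\ldots,I_{d+1}\subseteq\dbn$, each consisting of consecutive integers, such that $\lam_{\rho}$ does not vanish on $L(m)_{I_k}$ for any $k$. For each $k$ I would pick $g_k\in\gamma_m G\cap G_{I_k}$ whose image in $L(m)_{I_k}$ lies outside $\Ker\lam_{\rho}$. Since $\Ker\lam_{\rho}=(\Ker\rho\cap\gamma_m G)/\gamma_{m+1}G$, this forces $\rho(g_k)\neq 1$. On the other hand, $g_k\in\gamma_m G$ gives $\rho(g_k)\in\gamma_m\rho(G)$, which is central in $\rho(G)$ because $\rho\in\Hom_m(G)$ means $\rho(G)$ has nilpotency class $m$. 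Hence each $g_k$ lies in $C_{\rho}(G)$.

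Next, I would verify the two hypotheses of Lemma~\ref{centr_trivial}. The elements $g_1,\ldots,g_{d+1}$ pairwise commute: the $I_k$ are pairwise disjoint and each consists of consecutive integers, so partial commutativity yields $[G_{I_k},G_{I_l}]=1$ for $k\neq l$. For the joint centralizer condition, I would take an arbitrary $I\subseteq\dbn$ with $|I|=d$. Since the $d+1$ sets $I_1,\ldots,I_{d+1}$ are pairwise disjoint and $|I|=d$, a pigeonhole argument shows that at least one $I_k$ is disjoint from $I$; as this $I_k$ consists of consecutive integers, partial commutativity forces $G_I\subseteq \Cent_G(g_k)$. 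Because $G$ is generated in degree $d$, the union $\bigcup_{k=1}^{d+1}\Cent_G(g_k)$ therefore generates $G$, and Lemma~\ref{centr_trivial} produces the required $\rho$-CG sequence.

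There is no genuine obstacle in this argument; it is essentially a bookkeeping step that makes transparent why the threshold $d+1$ in the definition of $d$-generic is exactly what is needed. The only substantive point is the pigeonhole observation, which combines the degree-$d$ generation with the existence of $d+1$ disjoint consecutive blocks to ensure that every degree-$d$ generating subgroup commutes with at least one of the chosen central elements. All the deeper work of the paper is offloaded to the (later) task of arranging, by passing from $\rho$ to a conjugate, that $\lam_{\rho}$ actually becomes $d$-generic.
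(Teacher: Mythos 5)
Your proof is correct and follows essentially the same route as the paper: extract $g_k\in\gamma_m G\cap G_{I_k}$ with $\rho(g_k)\neq 1$ from the $d$-generic hypothesis, observe these lie in $C_\rho(G)$ since $\gamma_m\rho(G)$ is central, use partial commutativity plus pigeonhole to see the centralizers cover a generating set, and invoke Lemma~\ref{centr_trivial}. The only difference is that you spell out a couple of the intermediate steps (e.g. why $\rho(g_k)\neq 1$, why the $g_k$ pairwise commute) that the paper leaves implicit.
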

\begin{proof}
Since $\lam_{\rho}$ is $d$-generic, there exist elements $g_k\in \gamma_m G\cap G_{I_k}$ for $1\leq k\leq d+1$ such that $\rho(g_k)$ is non-trivial.
The assumption $\rho\in \Hom_m(G)$ guarantees that $\rho(\gamma_m G)=\gamma_m \rho(G)$ lies in the center of $\rho(G)$. Thus, $\rho(g_k)$ is central, and so $g_k\in C_{\rho}(G)$.

The hypotheses on $I_1,\ldots,I_{d+1}$ imply that $g_1,\ldots, g_{d+1}$ commute with each other. Now take any $I\subseteq \dbn$ with $|I|=d$. There exists $1\leq k\leq d+1$ such that $I\cap I_{k}=\emptyset$. Then every element of $G_I$ commutes with $g_k$. Since $G$ is generated in degree $d$, the
centralizers of the elements $g_1,\ldots, g_{d+1}$ generate $G$. Hence by Lemma~\ref{centr_trivial} $G$ has a $\rho$-CG sequence.
\end{proof}

Assume now that $G$ is a normal subgroup of a group $\Gamma$, and let $\overline\Gamma=\Gamma/G$. The conjugation action of $\Gamma$ on $\gamma_m G$ and $\gamma_{m+1}G$ induces an action
of $\overline\Gamma$ on $L(m)=\gamma_m G/\gamma_{m+1} G$. Define the corresponding  action of $\overline\Gamma$ on $\Lambda_m$ in the usual way (as in \S~\ref{sec:proofian}):
\begin{equation}
\label{eq:dualaction}
(g\lam) (x)=\lam(g^{-1}x)\mbox{ for all }g\in \overline\Gamma, \lam\in\Lambda_m\mbox{ and }x\in L(m).
\end{equation}

\begin{Claim} 
\label{rhoa}
Let $G$ be a partially commuting $\dbn$-group generated in degree $d$, and suppose that for every nonzero $\lam\in\Lambda_m$ the $\overline\Gamma$-orbit of $\lam$ contains a $d$-generic element. Then $G$ admits a $\rho$-CG sequence for any $\rho\in \Hom_m(G)$.
\end{Claim}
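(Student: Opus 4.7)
The plan is to reduce the claim to Lemma~\ref{lem:key} by conjugating $\rho$ via an element of $\Gamma$ that moves $\lam_\rho$ to a $d$-generic element of its $\overline\Gamma$-orbit. The key observation, already exploited in the proof of Theorem~\ref{thm:degree2}, is that the property of having a $\rho$-CG sequence is invariant under precomposing $\rho$ with an inner automorphism of $\Gamma$ (which restricts to an automorphism of $G$ since $G\trianglelefteq\Gamma$), so we may replace $\rho$ by any such twist.

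Concretely, given $\rho\in\Hom_m(G)$ and the associated nonzero $\lam_\rho\in\Lambda_m$, I would invoke the hypothesis to pick $\gbar\in\overline\Gamma$ such that $\gbar\lam_\rho$ is $d$-generic, then lift $\gbar$ to some $a\in\Gamma$ and form $\rho_a(x)=\rho(a^{-1}xa)$. Since $G$ is normal in $\Gamma$, $\rho_a$ is again a homomorphism from $G$, and $\rho_a(G)=\rho(a^{-1}Ga)=\rho(G)$ is nilpotent of class $m$, so $\rho_a\in\Hom_m(G)$. The next step is the bookkeeping identification $\lam_{\rho_a}=\gbar\lam_\rho$ up to a scalar that does not affect the kernel: using normality of $\gamma_m G$ in $\Gamma$,
\[
\Ker\rho_a\cap \gamma_m G \;=\; a(\Ker\rho\cap\gamma_m G)a^{-1},
\]
which modulo $\gamma_{m+1}G$ becomes $\gbar\cdot\Ker\lam_\rho=\Ker(\gbar\lam_\rho)$. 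By the Remark following the definition of $d$-generic, $\lam_{\rho_a}$ is therefore $d$-generic, so Lemma~\ref{lem:key} supplies a $\rho_a$-CG sequence $x_1,\ldots,x_t$ in $G$.

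Finally, I would transport this sequence back: setting $y_i=a^{-1}x_ia\in G$, the identities $\rho(y_i)=\rho_a(x_i)$, $[y_i,y_j]=a^{-1}[x_i,x_j]a$, and $\la y_1,\ldots,y_i\ra=a^{-1}\la x_1,\ldots,x_i\ra a$ show that $y_1,\ldots,y_t$ is a $\rho$-CG sequence generating $G$.

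I do not expect any genuine obstacle here: the work of finding a suitable conjugate of $\rho$ has been offloaded onto the hypothesis, and the rest is formal. The only point requiring some care is the identification of $\Ker\lam_{\rho_a}$ with $\Ker(\gbar\lam_\rho)$, which rests on the normality of $\gamma_m G$ (hence of $\gamma_{m+1}G$) in $\Gamma$, together with the convention \eqref{eq:dualaction} defining the $\overline\Gamma$-action on $\Lambda_m$.
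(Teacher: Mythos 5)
Your argument is essentially the paper's proof: pick $a\in\Gamma$ lifting a $\overline\Gamma$-element that makes $\overline a\lam_\rho$ $d$-generic, note $\Ker\lam_{\rho_a}=\Ker(\overline a\lam_\rho)$ so $\lam_{\rho_a}$ is $d$-generic, apply Lemma~\ref{lem:key}, and transport the $\rho_a$-CG sequence back by conjugation. The paper compresses the last transport step (it was already carried out explicitly in the proof of Theorem~\ref{thm:degree2}), but otherwise the two proofs coincide.
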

\begin{proof}
Take any $\rho\in \Hom_m(G)$. By assumption there exists $a\in \Gamma$ such that $\overline a\lam_{\rho}$ is $d$-generic, where $\overline a$ is the image of $a$ in $\overline\Gamma=\Gamma/G$. Define the homomorphism $\rho_a$ by $\rho_a(x)=\rho(a^{-1} xa)$. Then
\begin{multline*}
\Ker \lam_{\rho_a}=(\Ker\rho_a\cap \gamma_{m}G)/\gamma_{m+1}G=a(\Ker\rho\cap \gamma_{m}G) a^{-1}/\gamma_{m+1}G\\
=\overline a\left((\Ker\rho\cap \gamma_{m}G)/\gamma_{m+1}G\right)=\overline a\,\Ker\lam_{\rho}=\Ker(\overline a\lam_{\rho}).
\end{multline*}
Since $\overline a\lam_{\rho}$ is $d$-generic, so is $\lam_{\rho_a}$. By Lemma~\ref{lem:key}, $G$ admits a $\rho_a$-CG sequence and hence 
also admits a $\rho$-CG sequence.
\end{proof}

\subsection{How to apply Claim~\ref{rhoa}} 
\label{sec:topolya}
This subsection contains a somewhat informal discussion of how to ensure that the hypotheses of Claim~\ref{rhoa} hold. This discussion will not be formally used in any of the proofs in this paper, but it will help motivate the definition of a regular $\SL_n(\dbZ)$-module given in the next section. 
We will also mention one of the key ideas from \cite{CEP} used in the proof of the generalization of Theorem~\ref{thm:supermain} 
stated in \S~\ref{sec:closingcomments}.
\vskip .1cm

First, let us assume that for any permutation $\sigma\in S_n$ there exists $F_{\sigma}\in \overline\Gamma$ such that
\begin{equation}
\label{eq:fsigma}
F_\sigma(L(m)_I)=L(m)_{\sigma(I)}\mbox{ for every }I\subseteq\dbn.
\end{equation}
We did not formally record this property in the proof of Theorem~\ref{thm:degree2}, 
but it is closely related to Observation~\ref{kappa_action}(b)(ii).

Suppose now that $G$ is generated in degree $d$ with $n\geq md(d+1)$. An easy argument shows that $L(m)=\sum\limits_{|I|=md} L(m)_I$. Thus, given any nonzero $\lam\in\Lambda_m$, there exists at least one set $I$ with $|I|=md$ such that $\lam$ does not vanish on $L(m)_I$. Condition \eqref{eq:fsigma} then implies that for any $J$ with $|J|=md$ there exists $g\in \overline\Gamma$ such that $g\lam$ does not vanish on $L(m)_J$. At this point it is natural to ask if we can guarantee (under a suitable extra assumption) that there exists $g\in \overline\Gamma$ such that 
\begin{equation}
\label{eq:zariski}
g\lam \mbox{ does not vanish on } L(m)_J \mbox{ for {\bf every} } J \mbox{ with } |J|=md.
\end{equation}
Since $n\geq md(d+1)$, if such $g$ exists, then $g\lam$ is clearly $d$-generic. 

\vskip .12cm
There are two important cases where $g$ satisfying \eqref{eq:zariski} exists:

{\it Case 1:} $\lam=\lam_{\rho}$ for some homomorphism $\rho\in \Hom_m(G)$ which can be extended to a homomorphism defined on $\Gamma$. In this case $\rho_a$ defined by $\rho_a(x)=\rho(a^{-1} xa)$, with $a\in\Gamma$, has the same kernel as $\rho$ itself, and the proof of
Claim~\ref{rhoa} shows that $g\lam_{\rho}=\lam_{\rho}$ for all $g\in \Gamma/G$. Hence any $g\in \overline\Gamma$ satisfies \eqref{eq:zariski}.
This case, which may seem rather trivial, is actually sufficient for the proof of Theorem~\ref{FAb_concrete}(2), as we will explain in 
\S~\ref{sec:fabproof}.

{\it Case 2:} $\Im\lam$ is torsion-free. In this case we can embed $\Im\lam$ into the additive group of $\dbR$ (since $L(m)$ is finitely generated) and hence extend $\lam$ to a homomorphism $\lam_{\dbR}:L(m)\otimes \dbR\to \dbR$ such that $\lam(L(m)_I)\neq 0\iff \lam_{\dbR}(L(m)_I\otimes\dbR)\neq 0$. As will be shown in \cite{CEP}, one can find an element $g$ satisfying \eqref{eq:zariski} whenever there exists a subgroup $\overline\Gamma'$ of $\overline\Gamma$ which contains the elements $F_{\sigma}$ 
from \eqref{eq:fsigma} and whose image in $\GL(L(m)\otimes \dbR)$ has connected closure in the Zariski topology.  
The latter holds in the cases we are interested in: for instance, if $G=\IA_n$, we can take $\Gamma=\Aut(F_n)$, so that $\overline\Gamma=\GL_n(\dbZ)$ and let $\overline\Gamma'=\SL_n(\dbZ)$.
It is not hard to show that the closure of the image of $\SL_n(\dbZ)$ in $\GL(L(m)\otimes \dbR)$ is isomorphic to a quotient of $\SL_n(\dbR)$ and hence Zariski-connected. Note that the torsion-free case is sufficient for the proof of Theorem~\ref{OK}.

In order to prove Theorem~\ref{coh_main} and its consequences Corollary~\ref{cor:fgab} and Theorem~\ref{FAb_concrete}(1), 
we have to deal with $\lam$ not covered by either of the above cases, so we shall make more specific assumptions on the groups $\Gamma$ and $G$. 
We shall assume that
\begin{itemize}
\item[(i)] $\overline\Gamma$ contains a copy of $\SL_n(\dbZ)$;
\item[(ii)] $L(m)$ admits a graded structure $\{L(m)^I\}$ which is compatible with the filtered structure $\{L(m)_I\}$ in some sense (see condition~(iii) in Theorem~\ref{coh_main_new} and Lemma~\ref{claim1}); 
\item[(iii)] the graded structure $\{L(m)^I\}$ is ``well-behaved'' with respect to the action of the elements $E_{ij}, F_{ij}$ defined by \eqref{eij_def}.
\end{itemize}
Property (iii) will formally say that $L(m)$ is a regular $\SL_n(\dbZ)$-module, as defined in the next section. A nice feature of this definition is that we will only need to construct the grading and check (iii) for $m=1$; once this is done, we will automatically obtain the grading on $L(m)$
satisfying (iii) for arbitrary $m$. The assumptions (i)-(iii) will not be sufficient to ensure the existence of $g\in \SL_n(\dbZ)$ satisfying 
\eqref{eq:zariski}, but they will allow us to find $g$ such that $g\lam$ is $d$-generic, again assuming $n\geq md(d+1)$.

\section{Regular $\SL_n(\dbZ)$-modules}
\label{sec:reg}
 
\begin{Definition}\rm Let $V$ be an abelian group (written additively) and $n\in\dbN$. An {\it $\dbn$-grading of V} is a collection of additive subgroups
$\{V^I: I\subseteq \dbn\}$ of $V$ such that $$\sum_{I\subseteq \dbn}V^I=V$$ (note that the sum is not required to  be direct). The {\it degree} of $V$ (with respect to the grading $\{V^I\}$), denoted by $\deg(V)$, is the smallest integer $k$ such that $V^I=0$ whenever $|I|>k$ (this automatically implies that $\sum_{|I|\leq k} V^I=V$). 
\end{Definition}

Let $E_{ij},F_{ij}\in \SL_n(\dbZ)$ be defined by \eqref{eij_def} in \S~\ref{sec:ianab}.

\begin{Definition}\rm  Let $V$ be an $\SL_n(\dbZ)$-module endowed with an  $\dbn$-grading $\{V^I\}$. 
We will say that $V$ is {\it regular} if the following properties hold:
\begin{itemize}
\item[(1)] $E_{ij}$ acts trivially on $V^I$ if $I\cap\{i,j\}=\emptyset$.
\item[(2)] If $I\subseteq \dbn$, $i\in I$, $j\not \in I$, then for any $g\in \{E_{ij}^{\pm 1},E_{ji}^{\pm 1}\}$ and $v\in V^I$ we have
$$gv-v\in V^{I\cup\{j\}\setminus\{i\}}+V^{I\cup\{j\}}.$$
\item[(3)] If $I\subseteq \dbn$, $i,j\in\dbn$ with $i\neq j$, then $$F_{ij}V^I=V^{(i,j)I}$$ where $(i,j)I$ is the image of $I$
under the transposition $(i,j)$. In particular, if $i\in I$ and $j\not \in I$, then $F_{ij}V^I=V^{I\cup\{j\}\setminus\{i\}}$.
\end{itemize}
\end{Definition}

Informally speaking, the above definition generalizes some properties of the action of $\SL_n(\dbZ)$ on $\IA_n^{ab}$ that we used in the proof of Theorem~\ref{thm:degree2}. More precisely, (1), (2) and (3) correspond to parts (b)(i), (a) and (b)(ii) of Observation~\ref{kappa_action}, respectively.
There are variations of (2) that we could consider; we have chosen condition (2) above primarily because it is preserved under many natural operations (see Lemma~\ref{lem_regular} below). A key reason why we assume the existence of a graded structure (rather than a filtered structure)
in the above definition is that otherwise it seems hard to find a simple counterpart of (2).

\vskip .1cm

The following result provides our starting examples of regular $\SL_n(\dbZ)$-modules.

\begin{Lemma} 
\label{reg_chev}
Let $R$ be a commutative ring with $1$, $n\geq 2$, let $R^n$ be a free $R$-module of rank $n$ with basis
$e_1,\ldots, e_n$, and let $(R^n)^*=\Hom_R(R^n,R)$ be the dual module, with dual basis $e_1^*,\ldots, e_n^*$. Consider $R^n$ and $(R^n)^*$ as $\SL_n(\dbZ)$-modules with standard actions. Define $(R^n)^{\{i\}}=Re_i$,  $((R^n)^*)^{\{i\}}=Re_i^*$ for $1\leq i\leq n$ and $(R^n)^{I}=0$,
$((R^n)^*)^{I}=0$ for $|I|\neq 1$. Then with respect to these gradings $R^n$ and $(R^n)^*$ are regular
of degree $1$.
\end{Lemma}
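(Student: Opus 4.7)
The plan is to verify the three axioms of a regular $\SL_n(\dbZ)$-module directly from the definitions of $E_{ij}, F_{ij}$ and their actions on the standard module and its dual. A key simplifying observation is that by construction $V^I = 0$ whenever $|I|\neq 1$, both for $V = R^n$ and $V = (R^n)^*$. Hence $\deg V \leq 1$; equality holds because $V^{\{1\}}\neq 0$, giving the degree claim. Moreover, conditions (1), (2), (3) need only be checked when $I$ is a singleton.

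First I would record the action of $E_{ij}^{\pm 1}$ and $F_{ij}$ on the dual basis $e_k^*$. This is a short computation from $(gf)(v) = f(g^{-1}v)$, and in fact the resulting formulas appear verbatim in the proof of Observation~\ref{kappa_action}: $E_{ij}$ fixes $e_k^*$ for $k\neq i$ and sends $e_i^* \mapsto e_i^* - e_j^*$; $F_{ij}$ sends $e_i^* \mapsto -e_j^*$, $e_j^* \mapsto e_i^*$, and fixes $e_k^*$ for $k\neq i,j$. For $R^n$, the formulas in \eqref{eij_def} are already given.

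For condition (1), take $I = \{k\}$ with $k\notin\{i,j\}$. Then $E_{ij}$ fixes both $e_k$ and $e_k^*$ by the formulas above, so it acts trivially on $V^{\{k\}}$. For condition (2), the only relevant case is $I = \{i\}$, with $j\notin I$, and one must check four elements $g \in \{E_{ij}^{\pm 1}, E_{ji}^{\pm 1}\}$. Since $V^{I\cup\{j\}} = V^{\{i,j\}} = 0$, the target of $gv - v$ collapses to $V^{\{j\}} = V^{I\cup\{j\}\setminus\{i\}}$. A direct case-by-case evaluation (at most eight short calculations across the two modules) shows $gv - v$ is always a scalar multiple of $e_j$ or $e_j^*$, as required. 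Condition (3) is immediate: $F_{ij}$ permutes $\{e_1,\ldots,e_n\}$ up to sign according to the transposition $(i,j)$, so $F_{ij}(Re_k) = Re_{(i,j)k}$, and likewise on the dual side; thus $F_{ij} V^{\{k\}} = V^{\{(i,j)k\}} = V^{(i,j)\{k\}}$.

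There is no real obstacle; the proof is a direct bookkeeping of how $E_{ij}, E_{ji}, F_{ij}$ act on the single nonzero graded pieces $Re_k$ and $Re_k^*$. The only point requiring a moment's care is the placement of signs in the dual action and the observation that condition (2) for this particular grading is content-free except when $v$ is a scalar multiple of $e_i$ or $e_i^*$, where the computation is a single line.
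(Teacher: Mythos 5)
Your proposal is correct and follows essentially the same route as the paper: observe that the grading is supported in degree one, note conditions (1) and (3) are immediate from the formulas for $E_{ij}$ and $F_{ij}$ on the basis and dual basis, and verify condition (2) by a short computation for $I=\{i\}$, noting that the $V^{I\cup\{j\}}$ term vanishes. The paper's proof is just a slightly more condensed version of the same bookkeeping.
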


\begin{proof} The only non-obvious part is condition (2) in the definition of a regular $\SL_n(\dbZ)$-module. Condition (2) is vacuous if
$|I|\neq 1$, so assume that $I=\{i\}$ and $j\neq i$, in which case $I\cup\{j\}\setminus\{i\}=\{j\}$ and $I\cup\{j\}=\{i,j\}$.

We have $E_{ij}^{\pm 1}(re_i)=re_i$, $E_{ji}^{\pm 1}(re_i)-re_i= \pm re_j\in (R^n)^{\{j\}}$,
$E_{ji}^{\pm 1}(re_i^*)=re_i^*$, $E_{ij}^{\pm 1}(re_i^*)-re_i^*= \mp re_j^*\in ((R^n)^*)^{\{j\}}$, so (ii) holds (note that in this case terms from $V^{I\cup\{j\}}$ do not arise).
\end{proof}

\begin{Lemma} 
\label{lem_regular}
The following hold:
\begin{itemize}
\item[(a)]  Let $V$ and $W$ be regular $\SL_n(\dbZ)$-modules. 
\begin{itemize}
\item[(i)] Define $(V\oplus W)^I=V^I\oplus W^I$. Then $V\oplus W$ is regular and $\deg(V\oplus W)=\max\{\deg(V),\deg(W)\}$.
\item[(ii)] Define $(V\otimes_{\dbZ} W)^I=\sum\limits_{I=I_1\cup I_2}V^{I_1}\otimes W^{I_2}$ (here the union $I_1\cup I_2$ need not be disjoint). Then
$V\otimes_{\dbZ} W$ is regular and $\deg(V\otimes_{\dbZ} W)\leq \deg(V)+\deg(W)$.
\end{itemize}
\item[(b)] Let $V$ be a regular $\SL_n(\dbZ)$-module and $U$ a submodule. For $I\subseteq \dbn$
set $U^I=U\cap V^I$ and $(V/U)^I=V^I+U$. Then $V/U$ is always regular with $\deg(V/U)\leq \deg(V)$,
and $U$ is regular with $\deg(U)\leq\deg(V)$ provided $U=\sum\limits_{I\subseteq \dbn}U^I$.
\item[(c)] Suppose that $\SL_n(\dbZ)$ acts on an $\dbN$-graded Lie ring $L=\oplus_{k=1}^{\infty}L(k)$ by graded automorphisms. Suppose that $L$
is generated by $L(1)$ (as a Lie ring) and that $L(1)$ is a regular $\SL_n(\dbZ)$-module of degree $d$. For each $k>1$ and $I\subseteq \dbn$
define 
\begin{equation}
\label{eq:reggrading}
L(k)^I=\sum\limits_{I=I_1\cup I_2\cup\ldots\cup I_k}[L(1)^{I_1}, L(1)^{I_2},\ldots, L(1)^{I_k}].
\end{equation}
Then $L(k)$ is a regular $\SL_n(\dbZ)$-module of degree at most $kd$.
\item[(d)]\footnote{This property as well as its application in the proof of Lemma~\ref{claim:trivial}(b) have been discovered during the writing of the follow-up paper \cite{CEP}.}
In the setting of (c) assume in addition that $[L(1)^I,L(1)^J]=0$ whenever $I$ and $J$ are disjoint. Then
for every $m\in\dbN$ the degree of $L(m)$ is at most $m(d-1)+1$.
\end{itemize}
\end{Lemma}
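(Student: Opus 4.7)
The plan is to treat each part in turn, with (a) and (b) doing the bookkeeping that lets (c) and (d) reduce to combinatorial arguments. For (a)(i) I would simply note that each of conditions (1), (2), (3) transfers componentwise from $V$ and $W$ to $V \oplus W$ and the degree claim is immediate. For (a)(ii) the substance is (2), for which I would expand
$$g(v \otimes w) - v \otimes w = (gv - v) \otimes w + v \otimes (gw - w) + (gv - v) \otimes (gw - w)$$
and do a case analysis on whether $i$ lies in $I_1$, $I_2$, or both (writing $v \otimes w \in V^{I_1} \otimes W^{I_2}$ with $I = I_1 \cup I_2$). In each case, the regularity conditions (1) and (2) for $V$ and $W$ separately place each non-trivial factor in $V^{I_1 \cup \{j\} \setminus \{i\}} + V^{I_1 \cup \{j\}}$ or its $W$-analogue, and tensoring shows these land in $(V \otimes W)^{I \cup \{j\} \setminus \{i\}} + (V \otimes W)^{I \cup \{j\}}$. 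Conditions (1), (3) and the degree bound follow directly from the definitions.

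For (b), the quotient $V/U$ inherits everything from $V$ through the quotient map. In the submodule case I would verify (1) and (3) directly (trivial, and via $\SL_n(\dbZ)$-stability of $U$ combined with (3) for $V$), and derive (2) from the hypothesis $U = \sum_I U^I$ by decomposing $gv - v \in U \cap (V^{I \cup \{j\} \setminus \{i\}} + V^{I \cup \{j\}})$ inside $U^{I \cup \{j\} \setminus \{i\}} + U^{I \cup \{j\}}$; this step is the most delicate and is where the hypothesis is genuinely used. For (c), I would iterate (a)(ii) to obtain a regular $\SL_n(\dbZ)$-module structure on $L(1)^{\otimes k}$ of degree at most $kd$, with grading $(L(1)^{\otimes k})^I = \sum_{I_1 \cup \cdots \cup I_k = I} L(1)^{I_1} \otimes \cdots \otimes L(1)^{I_k}$. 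The surjective iterated bracket map $L(1)^{\otimes k} \twoheadrightarrow L(k)$ is $\SL_n(\dbZ)$-equivariant (since $\SL_n(\dbZ)$ acts by graded Lie-ring automorphisms) and carries this grading to the one given by \eqref{eq:reggrading}, so the quotient part of (b) delivers (c).

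For (d) the key lemma I would establish is that, under the extra hypothesis, an iterated bracket $[v_1, \ldots, v_m]$ (in any bracketing) with $v_t \in L(1)^{I_t}$ vanishes unless the intersection graph $\Gamma$ on $\{1, \ldots, m\}$, with $s \sim t$ iff $I_s \cap I_t \neq \emptyset$, is connected. Indeed, if $\Gamma$ splits as $A \sqcup B$ with no edges between, then $[L(1)^{I_a}, L(1)^{I_b}] = 0$ for every $a \in A$, $b \in B$; a standard Jacobi-identity induction then shows that the Lie subrings generated by $\{v_a : a \in A\}$ and $\{v_b : b \in B\}$ commute, so any multilinear bracket involving each $v_t$ exactly once lies in this commuting product and hence vanishes. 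Given $\Gamma$ connected, a BFS traversal starting at $1$ orders the $I_t$'s so that each $I_t$ for $t \geq 2$ meets $I_1 \cup \cdots \cup I_{t-1}$ in at least one element; combined with $|I_t| \leq d$ (from $\deg L(1) = d$), this gives
$$|I_1 \cup \cdots \cup I_m| \leq |I_1| + \sum_{t=2}^m (|I_t| - 1) \leq d + (m-1)(d-1) = m(d-1) + 1,$$
which is the desired degree bound.

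The main obstacle is the case-by-case verification of (2) for tensor products in (a)(ii), which controls how index sets combine under union and underlies the entire chain (a)(ii) $\Rightarrow$ (c) $\Rightarrow$ (d); this is largely bookkeeping but must be done carefully. The only additional substantive ingredient is the graph-connectedness lemma in (d), which I would phrase abstractly in terms of commuting Lie subrings rather than inducting on left-normed brackets, so as to handle arbitrary bracketings uniformly.
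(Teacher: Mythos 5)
Your proof of parts (a), (b), and (c) follows exactly the same route as the paper: (a)(i) and (b) are treated as bookkeeping, (a)(ii) is verified by expanding $g(v\otimes w)-v\otimes w$ and checking condition (2) case by case, and (c) is obtained by pushing the regular structure on $L(1)^{\otimes k}$ (from iterating (a)(ii)) through the surjective $\SL_n(\dbZ)$-equivariant bracket map $L(1)^{\otimes k}\twoheadrightarrow L(k)$ and invoking the quotient part of (b). The paper compresses all of this, but your unpacking matches it step for step.

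For (d) your argument is a mild variant. The paper observes directly that a \emph{left-normed} bracket $[L(1)^{I_1},\ldots,L(1)^{I_m}]$ vanishes unless each $I_k$ meets $I_1\cup\cdots\cup I_{k-1}$, and the cardinality bound $|\cup I_t|\le\sum|I_t|-(m-1)\le m(d-1)+1$ drops out immediately; this is economical because only left-normed brackets occur in \eqref{eq:reggrading}. You instead prove a stronger lemma — that an iterated bracket in \emph{any} parenthesization vanishes unless the intersection graph of $\{I_t\}$ is connected — via the commuting-subrings argument, and then recover the paper's consecutive-union condition by a BFS reordering, which is legitimate since the union $\cup I_t$ is order-independent. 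Both versions need the same underlying Jacobi-based induction to show that two generating families with pairwise-commuting generators generate commuting Lie subrings, a step the paper elides; your phrasing makes that dependence explicit and buys you uniformity over all bracketings at the cost of an extra reordering step. The approaches are otherwise equivalent and lead to the same bound.

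One caveat on (b), which applies equally to the paper's terse ``straightforward'': in the submodule case, the step where you decompose $gv-v\in U\cap\bigl(V^{I\cup\{j\}\setminus\{i\}}+V^{I\cup\{j\}}\bigr)$ as a sum in $U^{I\cup\{j\}\setminus\{i\}}+U^{I\cup\{j\}}$ is not automatic from $U=\sum_I U^I$ alone, since intersection need not distribute over the (non-direct) sum $V^A+V^B$. You correctly flag this as the delicate point, but as written the sketch does not close the gap; you would need to say more about why the hypothesis forces such a decomposition, or restrict to the situations in which the submodule clause is actually invoked.
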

\begin{proof} Parts (a)(i) and (b) are straightforward. 

(a)(ii) As in the proof of Lemma~\ref{reg_chev}, we only need to check condition (2) in the definition of a regular module. So take
$I\subseteq\dbn$, $i\in I$ and $j\not\in I$. It is enough to check the condition for $v$ of the form $v=x\otimes y$ where
$x\in V^{I_1}$, $y\in W^{I_2}$ and $I_1\cup I_2=I$.
We will consider the case when $i$ belongs to both $I_1$ and $I_2$ (the case when $i$ only lies in one of those sets is analogous).
Thus, $I_1=K_1\cup\{i\}$ and $I_2=K_2\cup\{i\}$ with $i\not\in K_1, K_2$.

Let $g\in \{E_{ij}^{\pm 1}, E_{ji}^{\pm 1}\}$. Since $V$ and $W$ are regular, we have
$gx=x+x_1+x_2$ and $gy=y+y_1+y_2$ where $x_1\in V^{K_1\cup\{j\}}$, $x_2=V^{K_1\cup\{i,j\}}$, 
$y_1\in W^{K_2\cup\{j\}}$, $y_2=W^{K_2\cup\{i,j\}}$. Then $x_1\otimes y_1\in (V\otimes W)^{K_1\cup K_2\cup \{j\}}=(V\otimes W)^{I\cup\{j\}\setminus\{i\}}$
and each of the $7$ terms $x\otimes y_1$, $x\otimes y_2$, $x_1\otimes y$, $x_1\otimes y_2$, $x_2\otimes y$, $x_2\otimes y_1$ and $x_2\otimes y_2$ lies in $(V\otimes W)^{K_1\cup K_2\cup \{i,j\}}=(V\otimes W)^{I\cup\{j\}}$.
Hence $gv-v=gx\otimes gy-x\otimes y\in (V\otimes W)^{I\cup\{j\}\setminus\{i\}}+(V\otimes W)^{I\cup\{j\}}$, as desired.
\vskip .1cm

(c) Consider the map $\phi:L(1)^{\otimes k}\to L(k)$ given by 
$\phi(v_1\otimes\ldots\otimes v_k)=[v_1,\ldots, v_k]$ (where the commutator on the right-hand side is left-normed). Since $\SL_n(\dbZ)$ acts on $L$ by graded automorphisms, $\phi$ is a homomorphism of 
$\SL_n(\dbZ)$-modules, and since $L$ is generated by $L(1)$, $\phi$ is surjective.
Thus, $L(k)$ is a quotient of $L(1)^{\otimes k}$ as an $\SL_n(\dbZ)$-module, and the grading \eqref{eq:reggrading}
coincides with the quotient grading defined in (b). Thus, (c) follows from (a)(ii) and (b). 

(d) follows from the proof of (c) combined with the observation that
$[L(1)^{I_1},\ldots, L(1)^{I_m}]=0$ unless
$I_k$ has non-empty intersection with $I_1\cup I_2\cup\ldots\cup I_{k-1}$ for all $2\leq k\leq m$,
and the latter condition implies that $|\cup I_i|\leq (\sum |I_i|)-(m-1)$.
\end{proof}

\section{The main technical theorem}
\label{sec:maintech}
Given a group $G$, let $L(G)=\oplus_{i=1}^{\infty}\gamma_i G/\gamma_{i+1} G$. The graded abelian group $L(G)$ has a natural structure of a graded Lie ring with the bracket on homogeneous elements defined by
$$[g \gamma_{i+1}G, h \gamma_{j+1}G]= [g,h]\gamma_{i+j+1}G \mbox{ for all }g\in \gamma_i G \mbox{ and }h\in \gamma_j G,$$
where $[g,h]=g^{-1}h^{-1}gh$. The bracket operation is well defined since $[\gamma_i G,\gamma_j G]\subseteq \gamma_{i+j}G$.
Moreover, $\gamma_{i+1}G=[\gamma_i G,G]$ for each $i$, which implies that $L(G)$ is generated as a Lie ring by its degree $1$ component $G^{ab}=G/\gamma_{2}G$.

Suppose now we are given another group $\Gamma$ which contains $G$ as a normal
subgroup and a homomorphism $\phi:\SL_n(\dbZ)\to \Gamma/G$. Define the action of $\SL_n(\dbZ)$ on $L(G)$ by composing $\phi$ with the conjugation
action of  $\Gamma/G$ on $L(G)$. It is easy to see that the obtained action is by Lie ring automorphisms and explicitly given by
\begin{equation}
\label{eq:action}
g.(x\gamma_{i+1}G)=\left(\widehat{\phi(g)} x\widehat{\phi(g)}^{-1}\right)\gamma_{i+1}G\mbox{ for all } g\in \SL_n(\dbZ),\,\,\,i\in\dbN 
\mbox{ and }x\in\gamma_i G
\end{equation}
(where $\widehat{\phi(g)}$ is any lift of $\phi(g)$ to $\Gamma$).

The following is our main technical result which will be used to prove Theorems~\ref{OK} and Theorem~\ref{coh_main}.

\begin{Theorem} 
\label{coh_main_new}
Let $d,n\in\dbN$, and let $G$ be a finitely generated group with the structure of an $\dbn$-group generated in degree $d$.
Let $\Gamma$ and $\phi$ be as above, and consider the following four conditions:
\begin{itemize}
\item[(i)] $G$ is partially commuting (as defined in \S~\ref{sec:ngps}).
\item[(ii)] $L(1)=G/[G,G]=G^{ab}$ has the structure of a regular $\SL_n(\dbZ)$-module of degree at most $d$ (with respect to the action
of $SL_n(\dbZ)$ given by \eqref{eq:action}).
\item[(iii)] For every $I\subseteq\dbn$, with $|I|\geq d$, the image of $G_I$ in $G^{ab}=L(1)$ contains
$\sum\limits_{J\subseteq I}L(1)^{J}$.
\item[(iv)] $G$ is weakly commuting (as defined in \S~\ref{sec:ngps}).
\end{itemize}
Let $N\in\dbN$, and assume that either 
\begin{itemize}
\item (i)-(iii) hold and $n\geq d(d+1)(N-1)$ or 
\item (i)-(iv) hold and $n\geq (d+1)+(d^2-1)(N-1)$.
\end{itemize}
If $\rho$ is any non-trivial homomorphism from $G$ to another group with $\Ker(\rho)\supseteq \gamma_{N}G$, then $G$ has a $\rho$-CG sequence.
\end{Theorem}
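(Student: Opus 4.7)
The plan is to apply a straightforward adaptation of Claim~\ref{rhoa} with $\overline\Gamma$ replaced by $\SL_n(\dbZ)$, whose action on $L(m)$ is obtained by composing $\phi$ with the natural action of $\Gamma/G$; the same proof goes through by lifting any $g\in\SL_n(\dbZ)$ to an element $\widehat{\phi(g)}\in\Gamma$ and conjugating $\rho$ by this lift. Since $\rho(G)$ is nilpotent of some class $m$ with $1\leq m\leq N-1$, we have $\rho\in\Hom_m(G)$, and the task reduces to showing that every nonzero $\lam\in\Lambda_m$ can be moved by some $g\in\SL_n(\dbZ)$ to a $d$-generic element.

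The next step is to install a regular $\SL_n(\dbZ)$-module structure on $L(m)$. Lemma~\ref{lem_regular}(c) promotes hypothesis (ii) to such a structure of degree at most $md$. Under (iv), I would additionally observe that conjugation by $g\in G$ acts trivially on $G^{ab}=L(1)$, so the weak commuting relation $[G_I^g,G_J]=1$ yields $[L(1)^I,L(1)^J]=0$ in $L(2)$ whenever $I,J\subseteq\dbn$ are disjoint; Lemma~\ref{lem_regular}(d) then lowers the degree bound to $m(d-1)+1$. Denote this bound by $\delta$; the numerical hypothesis on $n$ becomes $n\geq(d+1)\delta$. In parallel, condition (iii), applied iteratively in the commutator description of $L(m)^J$ together with the containment $[G_I,G_I]\subseteq G_I$, yields $L(m)^J\subseteq L(m)_I$ whenever $J\subseteq I$ and $|I|\geq d$. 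Consequently, it suffices to produce $d+1$ pairwise disjoint consecutive intervals $I_1,\ldots,I_{d+1}\subseteq\dbn$ of length $\delta$ and subsets $J_k\subseteq I_k$ with $|I_k|\geq d$ such that the translated form of $\lam$ does not vanish on the graded piece $L(m)^{J_k}$ for each $k$.

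The heart of the proof is an inductive construction of elements $g_1,g_2,\ldots\in\SL_n(\dbZ)$ together with the intervals $I_k$ one by one. In the base case, the degree bound on $L(m)$ gives $J_1$ with $|J_1|\leq\delta$ and $\lam(L(m)^{J_1})\neq 0$, and an appropriate product of $F_{pq}$'s (condition (3)) moves $J_1$ into $I_1=\{1,\ldots,\delta\}$. For the inductive step I would restrict to transformations built from generators $E_{pq}^{\pm 1},F_{pq}$ with $\{p,q\}\subseteq\dbn\setminus U_k$, where $U_k=\bigcup_{j\leq k}I_j$: condition (1) guarantees that such $E_{pq}$'s act trivially on each $L(m)^{J_j}$, while condition (3) guarantees that such $F_{pq}$'s stabilize $L(m)^{J_j}$ setwise, so non-vanishing on the earlier intervals is automatically preserved.

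The main obstacle is the subcase where $g_{k-1}\lam$ already vanishes on every $L(m)^J$ with $J\subseteq\dbn\setminus U_k$: here no product of the allowed $F_{pq}$'s by itself can place a new non-vanishing window inside $\dbn\setminus U_k$. I would resolve this by invoking condition (2): for $v\in L(m)^J$ with $J$ straddling the boundary, applying $E_{qp}$ with $p\in J\cap U_k$ and $q\in\dbn\setminus U_k$ produces a correction in $L(m)^{(J\cup\{q\})\setminus\{p\}}+L(m)^{J\cup\{q\}}$. A careful case analysis, parallel in spirit to the explicit manipulations in the proof of Lemma~\ref{lemkey4}, then allows one to use these corrections to populate a graded component supported entirely in the unused region while tracking possible cancellations. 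This step is where the bulk of the technical work lies, and the sharper bound on $\delta$ provided by the weak commuting hypothesis (iv) is what accounts for the two distinct numerical ranges for $n$ in the statement.
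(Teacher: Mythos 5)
Your reductions match the paper's exactly: you correctly reduce (via Claim~\ref{rhoa}) to finding a $d$-generic element in the $\SL_n(\dbZ)$-orbit of $\lam_\rho\in\Lambda_m$, correctly obtain the degree bound $M=md$ or $M=1+m(d-1)$ via Lemma~\ref{lem_regular}(c) and (d), and correctly identify that condition (iii) gives $L(m)^J\subseteq L(m)_I$ for $J\subseteq I$, $|I|\geq d$ (this is the paper's Lemma~\ref{claim1}). One small imprecision: in the weakly commuting case, $[G_I^g,G_J]=1$ directly yields $[L(1)_I,L(1)_J]=0$ (vanishing for the \emph{filtered} pieces), and one still needs condition (iii) to pass to $[L(1)^I,L(1)^J]=0$ for the \emph{graded} pieces; this is exactly how the paper's Lemma~\ref{claim:trivial}(b) is organized.

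The genuine gap lies precisely where you say ``the bulk of the technical work lies'': your inductive/greedy construction is not completed, and as sketched it runs into a real obstruction. In the obstacle case you propose to apply $E_{qp}$ with $p\in J\cap U_k$ and $q\in\dbn\setminus U_k$. But this generator has $p\in U_k$, so by condition (2) it also produces corrections on the graded pieces $L(m)^{J_j}$ with $p\in J_j\subseteq U_k$ that were already secured, namely corrections supported on $J_j\cup\{q\}\setminus\{p\}$ and $J_j\cup\{q\}$; nothing forces $\lam'$ to vanish there, so an old window can be destroyed in the very move meant to open a new one. Moreover, the correction produced by a single such $E_{qp}$ lands on sets that still straddle $U_k$, so it does not by itself ``populate a graded component supported entirely in the unused region.'' A greedy approach therefore has no termination guarantee, and tracking cancellations across many iterations is exactly the kind of bookkeeping the paper notes (in \S~\ref{sec:topolya}) would be ``cumbersome.''

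The paper's actual resolution is a global extremal argument and is the key missing idea in your proposal. Define, for $s$-tuples $(I_1,\ldots,I_s)$ in $supp(\mu)$, the functional $D(I_1,\ldots,I_s)=\sum|I_i|-\sum_{i\neq j}|I_i\cap I_j|$, choose $\mu$ in the $\SL_n(\dbZ)$-orbit of $\lam$ maximizing $D_s(\mu)$ (with $s=d+1$), and pick a maximally disjoint tuple for $\mu$. If the $I_i$ are not disjoint and do not cover $\dbn$, one fixes $i\in I_1\cap I_2$ and $j\notin\cup I_l$; maximality forces $I_k\cup\{j\}\setminus\{i\}, I_k\cup\{j\}\notin supp(\mu)$ for every $k$, so by condition (2) every generator $g\in\{E_{ij}^{\pm1},E_{ji}^{\pm1}\}$ preserves the values of $\mu$ on all $L(m)^{I_k}$ simultaneously, hence $(I_1,\ldots,I_s)$ stays maximally disjoint for $g\mu$; iterating along $F_{ij}=E_{ij}E_{ji}^{-1}E_{ij}$ yields a contradiction with condition (3), since $F_{ij}\mu$ must have $I_1\cup\{j\}\setminus\{i\}$ in its support. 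The bound $n\geq(d+1)M$ then forces the maximally disjoint tuple to actually be disjoint, giving Claim~\ref{claim2}, which you correctly identified as the target. This variational step (Proposition~\ref{prop:tech}) is what replaces the case analysis of Lemma~\ref{lemkey4} at general $m$; without it, your proposal is an accurate road map to the right combinatorial statement but not a proof of it.
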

\begin{Remark}\rm Since $L(1)^J=0$ for $|J|>d$ (by (ii)), it suffices to know (iii) when $|I|=d$. 
\end{Remark}

Before proving Theorem~\ref{coh_main_new}, we shall use it to prove Theorem~\ref{coh_main}(b). Theorem~\ref{coh_main}(b) is a direct consequence
of Proposition~\ref{cent_exist} and the following result.

\begin{Theorem}
\label{thm:IAnrcg} Let $N\geq 2$ and $n\geq 8N-4$ be integers and $G=\IA_n$. If $\rho$ is a non-trivial homomorphism from $G$ to another group
with $\Ker\rho\supseteq \gamma_N G$, then $G$ has a $\rho$-CG sequence.
\end{Theorem}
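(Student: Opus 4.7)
The plan is to deduce the theorem directly from Theorem~\ref{coh_main_new} applied to $G=\IA_n$ with ambient group $\Gamma=\Aut(F_n)$, degree parameter $d=3$, and the homomorphism $\phi:\SL_n(\dbZ)\hookrightarrow\GL_n(\dbZ)\cong\Gamma/G$. The $\dbn$-group structure on $\IA_n$ is the one inherited from $\Aut(F_n)$ (see \S~\ref{sec:ngps}), and $\IA_n$ is generated in degree $3$ because the Magnus generators $K_{ij}$ and $K_{ijk}$ all lie in $G_I$ for some $I$ with $|I|\leq 3$. Since the bound on $n$ that Theorem~\ref{coh_main_new} requires under all four hypotheses (i)--(iv) is $n\geq (d+1)+(d^2-1)(N-1)=4+8(N-1)=8N-4$, matching the hypothesis of Theorem~\ref{thm:IAnrcg} exactly, it suffices to verify that $G=\IA_n$ satisfies conditions (i)--(iv) with this value of $d$.

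For (i) and (iv), I would observe that $\IA_n$ is in fact a \emph{commuting} $\dbn$-group, which is stronger than both partial and weak commutativity (the latter then holding trivially with $g=1$). Indeed, if $I,J\subseteq\dbn$ are disjoint, then $G_I$ is generated by those $K_{ij}$ and $K_{ijk}$ whose indices all lie in $I$ (and similarly for $J$), and relations (R1)--(R3) of Lemma~\ref{lem:rel} with $a_i\neq b_j$ for all $i,j$ give elementwise commutation of $G_I$ and $G_J$.

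For (ii), I would use the standard identification of $L(1)=\IA_n^{ab}$ with $V^*\otimes(V\wedge V)$ as an $\SL_n(\dbZ)$-module (as recalled in \S~\ref{sec:ianab}), and equip it with the tensor-product grading built via Lemma~\ref{lem_regular}(a)(ii) and (b) from the degree-$1$ regular grading on $V$ and $V^*$ provided by Lemma~\ref{reg_chev}. Concretely, $\kappa_{ijk}\in L(1)^{\{i,j,k\}}$ (with the index set collapsed by repetition when needed), so the grading has degree at most $3=d$ and is regular by Lemma~\ref{lem_regular}. The conjugation action of $\SL_n(\dbZ)$ on $L(1)$ described by \eqref{eq:action} coincides with its natural action on $V^*\otimes(V\wedge V)$ via the isomorphism \eqref{eq:kappa}, so regularity applies to the right module structure. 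For (iii), given $I\subseteq\dbn$ with $|I|\geq 3$, the subgroup $G_I$ contains $K_{ij}$ for all $i,j\in I$ and $K_{ijk}$ for all distinct $i,j,k\in I$, whose abelianized images $\kappa_{iij}$ and $\kappa_{ijk}$ span $\sum_{J\subseteq I}L(1)^J$ by construction of the grading.

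With conditions (i)--(iv) verified and the numerical bound $n\geq 8N-4$ being exactly the hypothesis $n\geq (d+1)+(d^2-1)(N-1)$ for $d=3$, Theorem~\ref{coh_main_new} applies and yields a $\rho$-CG sequence for every nontrivial homomorphism $\rho$ from $G$ to another group with $\Ker\rho\supseteq\gamma_N G$. I do not foresee a real obstacle in this argument: the content sits entirely inside Theorem~\ref{coh_main_new}, and the verifications above are elementary checks using the Magnus description of $\IA_n$ and the explicit isomorphism $\IA_n^{ab}\cong V^*\otimes(V\wedge V)$. The only subtle point is the bookkeeping that the weak commutativity hypothesis (iv), needed to access the sharper bound $8N-4$ rather than $d(d+1)(N-1)=12(N-1)$, is free here because strong commutativity of $\IA_n$ as an $\dbn$-group is already built into Lemma~\ref{lem:rel}.
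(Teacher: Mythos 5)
Your proposal is correct and follows exactly the same route as the paper: apply Theorem~\ref{coh_main_new} with $d=3$, verify (i) and (iv) by noting $\IA_n$ is a commuting $\dbn$-group, verify (ii) via the identification $\IA_n^{ab}\cong V^*\otimes(V\wedge V)$ and Lemmas~\ref{reg_chev}, \ref{lem_regular}, and verify (iii) from the images of $K_{ij}$, $K_{ijk}$ in $L(1)^J$. The only cosmetic difference is that the paper does not bother to rederive commutativity from relations (R1)--(R3) — it is immediate from the definition of $G_I\subseteq\Aut(F_n)$, whereas your justification implicitly presupposes a generating-set statement for $G_I$ that would itself need an argument.
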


\begin{proof} We just need to check that the pair $(G,N)$ satisfies conditions (i)-(iv) in Theorem~\ref{coh_main_new} for $d=3$. We shall use the $\dbn$-group structure on $\IA_n$ defined in \S~\ref{sec:ngps}. Recall that this $\dbn$-group is commuting and generated in degree $3$, so (i) and (iv) hold.
Also recall that $L(1)=G^{ab}\cong V^*\otimes (V\wedge V)$ as $\SL_n(\dbZ)$-modules where $V=\dbZ^n$ with the standard $\SL_n(\dbZ)$-action. 
Using Lemmas~\ref{reg_chev} and \ref{lem_regular} we deduce that $L(1)$ is a regular $\SL_n(\dbZ)$-module of degree at most $3$, so (ii) holds. 
Moreover, if $\{e_i\}$ is the standard basis of $V$ and $\{e_i^*\}$ the dual basis, then 
for every $I\subseteq \dbn$ we have 
$$L(1)^I=\sum\limits_{I=\{i,j,k\}} \dbZ e_i^*\otimes e_j\wedge e_k \mbox{ where } i,j,k \mbox{ need not be distinct}.$$  
(so in particular $L(1)^I=0$ if $|I|\neq 2,3$).
Let us now take any $I$ with $|I|\geq 3$. The subgroup $G_I$ contains the element $K_{ij}$ for any distinct $i,j\in I$ and $K_{ijk}$ for any distinct $i,j,k\in I$. Since $K_{ij}$ and $K_{ijk}$ project to $e_i^*\otimes (e_i\wedge e_j)$ and $e_i^*\otimes (e_j\wedge e_k)$, respectively, we conclude that (iii) holds as well.
\end{proof}

Note that Theorem~\ref{thm:IAnrcg} yields not only Theorem~\ref{coh_main}(b), but also an alternative proof of Theorem~\ref{OK} for $G=\IA_n$, $n\geq 12$. By the same logic, to prove Theorem~\ref{OK} for the Torelli group and Theorem~\ref{coh_main}(c), we need to check conditions (i)-(iv) of
Theorem~\ref{coh_main_new} for $G=\calI_n^1$ and $d=3$. This verification requires more work and will be postponed till \S~\ref{sec:torelli}.

\vskip .12cm
The remainder of this section will be devoted to the proof of Theorem~\ref{coh_main_new}.

\subsection{Proof of Theorem~\ref{coh_main_new}}

Throughout this subsection we fix the notations introduced in Theorem~\ref{coh_main_new}. As before, we set $L(k)=\gamma_k G/\gamma_{k+1}G$ for all $k\in\dbN$.

Take any non-trivial homomorphism $\rho$ from $G$ to another group with $\Ker\rho\supseteq\gamma_N G$. Let $m$ be the nilpotency class of $\rho(G)$, so
that $1\leq m\leq N-1$. 

The proof of Theorem~\ref{coh_main_new} will start with a sequence of simple reductions; in fact, some steps in that sequence have already been
completed in \S~\ref{sec:proofsketch} when we established Claim~\ref{rhoa}. We will eventually reduce Theorem~\ref{coh_main_new} to 
Proposition~\ref{prop:tech}, a combinatorial statement about the regular $\SL_n(\dbZ)$-module $L(m)$, and it is only the proof of that proposition where the definition of a regular $\SL_n(\dbZ)$-module will be fully used. The main steps in the proof of Theorem~\ref{coh_main_new} are indicated in the following diagram:
\vskip .1cm

\centerline{
Proposition~\ref{prop:tech}$\Longrightarrow$ Claim~\ref{claim2}$\,\,\stackrel{\mbox{\tiny Lemma~\ref{lemma:cc}}}{\large \Longrightarrow}\,\,$ Claim~\ref{claim:filtered}$\,\,\stackrel{\mbox{\tiny Claim~\ref{rhoa}}}{\large \Longrightarrow}\,\,$ Theorem~\ref{coh_main_new}. 
}
\vskip .2cm

As in \S~\ref{sec:proofsketch}, for each $k\in\dbN$ let $L(k)_I$ be the image of $\gamma_k G\cap G_I$ in $L(k)$. 
Let $\{L(k)^I\}$ be the $\dbn$-grading on $L(k)$ obtained from the grading $\{L(1)^I\}$ via \eqref{eq:reggrading} (see Lemma~\ref{lem_regular}(c)).

Condition (iii) of Theorem~\ref{coh_main_new} simply says that 
$L(1)^J\subseteq L(1)_I$ whenever $J\subseteq I$ and $|I|\geq d$. We claim that the inclusion remains true if we replace $L(1)$ by $L(m)$: 

\begin{Lemma} For any subsets $J\subseteq I\subseteq\dbn$ with $|I|\geq d$ we have $L(m)^J\subseteq L(m)_I$.
\label{claim1}
\end{Lemma}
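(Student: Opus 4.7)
The plan is to unpack the definition \eqref{eq:reggrading} of the grading on $L(m)$ and reduce everything to the assertion that iterated group commutators of elements of $G_I$ land in $G_I \cap \gamma_m G$. By \eqref{eq:reggrading} we have $L(m)^J = \sum_{J = J_1 \cup \ldots \cup J_m}[L(1)^{J_1},\ldots,L(1)^{J_m}]$, and since the Lie bracket on $L(G)$ is multilinear, it suffices to verify that for every decomposition $J = J_1 \cup \ldots \cup J_m$ and any choice of elements $v_k \in L(1)^{J_k}$, the iterated bracket $[v_1,\ldots,v_m]$ lies in $L(m)_I$.

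First I would invoke hypothesis (iii) of Theorem~\ref{coh_main_new}: since $J_k \subseteq J \subseteq I$ and $|I| \geq d$, each $L(1)^{J_k}$ is one of the summands of $\sum_{J' \subseteq I} L(1)^{J'}$, and hence is contained in the image of $G_I$ under the canonical projection $G \twoheadrightarrow G^{ab} = L(1)$. Consequently every $v_k$ admits a lift $\hat v_k \in G_I$. Next I would form the iterated left-normed group commutator $[\hat v_1, \ldots, \hat v_m]$: it lies in $G_I$ because $G_I$ is a subgroup (and hence closed under taking commutators), and it lies in $\gamma_m G$ by the defining property of the lower central series. Therefore $[\hat v_1, \ldots, \hat v_m] \in \gamma_m G \cap G_I$, and its image in $L(m)$ is precisely $[v_1, \ldots, v_m]$ by the very construction of the Lie bracket on $L(G)$. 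This shows $[v_1, \ldots, v_m] \in L(m)_I$, as required.

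I do not expect any serious obstacle here: the lemma is essentially a bookkeeping step that unwinds the compatibility between the \emph{graded} structure $\{L(k)^I\}$ on the lower central factors (built recursively from the grading of $L(1)$ via \eqref{eq:reggrading}) and the \emph{filtered} structure $\{L(k)_I\}$ inherited from the $\dbn$-group filtration on $G$. Condition (iii) is precisely the compatibility at the bottom level $k=1$, and the argument above propagates it up the lower central series in a single step using only the fact that $G_I$ is a subgroup.
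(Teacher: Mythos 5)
Your proof is correct and follows essentially the same route as the paper: unwind the definition of $L(m)^J$ via \eqref{eq:reggrading}, use hypothesis (iii) to lift each degree-one factor to $G_I$, form the group commutator (which lands in $\gamma_m G \cap G_I$ since $G_I$ is a subgroup), and note that its image in $L(m)$ is the corresponding Lie bracket. No gaps; this is exactly the argument the paper gives.
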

\begin{proof} By definition $L(m)^J$ is spanned by (some, not necessarily all) elements of the form $[x_1,\ldots, x_m]$
where $x_k\in L(1)^{J_k}$ for some $J_k\subseteq J$ (and thus $J_k\subseteq I$). Since $|I|\geq d$,  condition (iii) implies that 
$x_k\in L(1)_I$, so there exist exist elements $\{g_k\in G_{I}\}_{k=1}^d$ such that $x_k=g_k[G,G]$. Then $[x_1,\ldots, x_m]$
is the image in $L(m)$ of the element $[g_1,\ldots, g_m]$ which lies in $\gamma_m G_I\subseteq \gamma_m G\cap G_I$, so by definition
$[x_1,\ldots, x_m]\in L(m)_I$.
\end{proof}

Now set $M=1+m(d-1)$ if $G$ is weakly commuting and $M=md$ otherwise. 

\begin{Lemma}
\label{claim:trivial}
The following hold:
\begin{itemize}
\item[(a)] $n\geq (d+1)M$;
\item[(b)] $L(m)$ is a regular $\SL_n(\dbZ)$-module of degree at most $M$.
\end{itemize}
\end{Lemma}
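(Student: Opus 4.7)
The plan is straightforward: part (a) is a direct numerical check using $1\le m\le N-1$, and part (b) follows from parts (c) and (d) of Lemma~\ref{lem_regular} depending on which set of hypotheses holds.

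For (a), recall that the nilpotency class of $\rho(G)$ satisfies $m\le N-1$ because $\gamma_N G\subseteq \Ker\rho$. If $G$ is only partially commuting, then $M=md$ and $(d+1)M=(d+1)md\le d(d+1)(N-1)\le n$. If $G$ is weakly commuting, then $M=1+m(d-1)$ and
\[
(d+1)M=(d+1)+m(d^2-1)\le (d+1)+(d^2-1)(N-1)\le n.
\]

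For (b), equip $L(m)$ with the $\dbn$-grading $\{L(m)^I\}$ obtained from the grading $\{L(1)^I\}$ via \eqref{eq:reggrading}; this is legitimate because $L(G)$ is generated as a Lie ring by $L(1)$ and $\SL_n(\dbZ)$ acts by graded Lie ring automorphisms via \eqref{eq:action}. In the partially commuting case, Lemma~\ref{lem_regular}(c) applies directly and yields that $L(m)$ is regular of degree at most $md=M$.

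The remaining, weakly commuting case is the main point. Here I would apply Lemma~\ref{lem_regular}(d), so the task reduces to verifying that
\[
[L(1)^I,L(1)^J]=0\quad\text{whenever $I$ and $J$ are disjoint.}
\]
Since $\deg L(1)\le d$ by (ii), both $L(1)^I$ and $L(1)^J$ vanish unless $|I|,|J|\le d$, so assume this. Part~(a) of this lemma (or the direct bounds above) gives $n\ge d(d+1)\ge 2d$, so one can enlarge $I$ and $J$ to disjoint subsets $I',J'\subseteq\dbn$ of size exactly $d$. Weak commutativity provides $g\in G$ such that $G_{I'}^g$ commutes elementwise with $G_{J'}$; hence the images of $G_{I'}^g$ and $G_{J'}$ in $L(1)=G^{ab}$ have trivial Lie bracket in $L(G)$. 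Since conjugation by $g\in G$ acts trivially on $G^{ab}$, the images of $G_{I'}^g$ and $G_{I'}$ in $L(1)$ coincide, so the images of $G_{I'}$ and $G_{J'}$ have trivial bracket. Finally, condition (iii) applied to $I'$ (resp.~$J'$), together with $I\subseteq I'$ (resp.~$J\subseteq J'$), gives $L(1)^I\subseteq $ image of $G_{I'}$ and $L(1)^J\subseteq$ image of $G_{J'}$ in $L(1)$. Therefore $[L(1)^I,L(1)^J]=0$, and Lemma~\ref{lem_regular}(d) yields $\deg L(m)\le m(d-1)+1=M$.

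The only place that requires any genuine thought is the verification of the vanishing $[L(1)^I,L(1)^J]=0$; the core point is that weak commutativity is ``enough'' because $G^{ab}$ does not see the conjugating element, and condition (iii) transfers information from the subgroups $G_{I'}$ to the graded pieces $L(1)^I$.
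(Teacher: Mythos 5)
Your proof is correct and follows essentially the same route as the paper's: part (a) by the numerical hypotheses, part (b) via Lemma~\ref{lem_regular}(c) in general and Lemma~\ref{lem_regular}(d) in the weakly commuting case, with the key verification $[L(1)^I,L(1)^J]=0$ obtained by enlarging $I,J$ to disjoint sets of size $d$, using weak commutativity plus triviality of inner automorphisms on $G^{ab}$, and then applying condition~(iii).
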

\begin{proof} (a) holds by the hypotheses on $n$ in Theorem~\ref{coh_main_new} (since $m\leq N-1$).

(b) Recall that $L(1)$ is a regular $\SL_n(\dbZ)$-module of degree at most $d$, so by Lemma~\ref{lem_regular}(c),
$L(m)$ is regular of degree at most $md$. 

Suppose now that $G$ is weakly commuting, and let $I,J\subseteq\dbn$ be disjoint. By definition there exists
$g\in G$ such that $G_I^g$ and $G_J$ commute. Since for every $x\in G$ the elements $x$ and $x^g=x[x,g]$ have the same image in $G^{ab}=L(1)$, it follows that the images of $G_I$ and $G_J$ in $L(1)$ commute,
that is, $$[L(1)_I, L(1)_J]=0. \eqno (***)$$
We claim that $[L(1)^I,L(1)^J]=0$ for any disjoint $I,J$. Once this is proved, we deduce from Lemma~\ref{lem_regular}(d) that
$L(m)$ has degree at most $1+m(d-1)$. 

So let $I,J\subseteq \dbn$ be disjoint. If $|I|>d$ or $|J|>d$, then $L(1)^I=0$ or $L(1)^J=0$, and there is nothing to prove. If $|I|\leq d$ and  $|J|\leq d$, we can choose disjoint $I'$ and $J'$ with $|I'|=|J'|=d$, $I\subseteq I'$ and $J\subseteq J'$ (this is possible since $n\geq d^2+d\geq 2d$).
Then $[L(1)_{I'}, L(1)_{J'}]=0$ by (***); on the other hand $L(1)^I\subseteq L(1)_{I'}$ and  $L(1)^J\subseteq L(1)_{J'}$ by
condition (iii) in Theorem~\ref{coh_main_new} and hence $[L(1)^I,L(1)^J]=0$.
\end{proof}

As in \S~\ref{sec:proofsketch}, let $U$ be any abelian group containing a copy of every finitely generated abelian group and
$\Lambda_m=\Hom_{\dbZ}(L(m),U)$. Define the action of $\SL_n(\dbZ)$ on $\Lambda_m$ by
$$(g\lam) (x)=\lam(g^{-1}\!.\,x)\mbox{ for all }g\in \SL_n(\dbZ), \lam\in\Lambda_m\mbox{ and }x\in L(m)$$
where the action of $\SL_n(\dbZ)$ on $L(m)$ is given by \eqref{eq:action}. Equivalently, this action of $\SL_n(\dbZ)$ on $\Lambda_m$ is the composition
of $\phi:\SL_n(\dbZ)\to\Gamma/G$ with the action of $\Gamma/G$ on $\Lambda_m$ given by \eqref{eq:dualaction}. 
In particular, for every $\lam\in\Lambda_m$ its $\SL_n(\dbZ)$-orbit is contained in its $\Gamma/G$-orbit.

Thus, by Claim~\ref{rhoa}, Theorem~\ref{coh_main_new} is implied by the following statement:

\begin{Claim} 
\label{claim:filtered}
For any nonzero $\lam\in\Lambda_m$ there exists an element $\mu$ in the $\SL_n(\dbZ)$-orbit of $\lam$ and disjoint subsets 
$I_1,\ldots, I_{d+1}$ each of which consists of consecutive integers such that $\mu$ does not vanish on $L(m)_{I_k}$ for each $k$.
\label{rhoa1} 
\end{Claim}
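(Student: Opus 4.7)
The plan is to derive Claim~\ref{claim:filtered} from an abstract combinatorial proposition (the yet-to-be-stated Proposition~\ref{prop:tech}) about regular $\SL_n(\dbZ)$-modules. I expect this proposition to assert: if $V$ is a regular $\SL_n(\dbZ)$-module of degree at most $M$ and $n \geq rM$, then for any nonzero $\lambda \in \Hom(V, U)$ there exist $g \in \SL_n(\dbZ)$ and pairwise disjoint subsets $K_1, \ldots, K_r \subseteq \dbn$ with $|K_l| \leq M$ such that $g\lambda$ does not vanish on $V^{K_l}$ for each $l$. Applied to $V = L(m)$ and $r = d+1$, the numerical hypothesis $n \geq (d+1)M$ is precisely Lemma~\ref{claim:trivial}(a), and the regularity of $L(m)$ of degree at most $M$ is Lemma~\ref{claim:trivial}(b).

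Given this proposition, Claim~\ref{claim:filtered} follows quickly. By property (3) of regular modules, $F_{ij}$ realizes the transposition $(i,j)$ on graded pieces (up to signs that do not affect non-vanishing), so an appropriate product of $F_{ij}$'s provides an element $h \in \SL_n(\dbZ)$ whose action on $\{L(m)^K\}$ moves the disjoint $K_l$ into $d+1$ pairwise disjoint intervals $I_l$ of consecutive integers, each of size exactly $M$; such intervals fit into $\dbn$ since $(d+1)M \leq n$. Because $M \geq d$ in both cases covered by Theorem~\ref{coh_main_new} (as $M = md$ in the partially commuting case and $M = 1 + m(d-1) \geq d$ when $m \geq 1$ in the weakly commuting case), Lemma~\ref{claim1} gives $L(m)^{K_l'} \subseteq L(m)_{I_l}$ for the translated sets $K_l' \subseteq I_l$. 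Thus $\mu := hg\lambda$ lies in the $\SL_n(\dbZ)$-orbit of $\lambda$ and satisfies the required non-vanishing on each $L(m)_{I_l}$.

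For Proposition~\ref{prop:tech}, I would proceed by induction on $r$. The base case $r = 1$ is immediate since $V = \sum_{|K| \leq M} V^K$ forces $\lambda|_{V^K} \neq 0$ for some $K$ with $|K| \leq M$. For the inductive step, suppose $g\lambda$ is nonzero on $V^{K_1}, \ldots, V^{K_k}$ with the $K_l$ disjoint and $|K_l| \leq M$, and let $T = \dbn \setminus \bigcup_l K_l$, so that $|T| \geq n - kM \geq M$. The subgroup $\SL(T) \leq \SL_n(\dbZ)$ generated by $\{E_{ij}, F_{ij} : i, j \in T\}$ is a copy of $\SL_{|T|}(\dbZ)$ acting trivially on each $V^{K_l}$ by property (1) (hence preserving all existing non-vanishings), and it preserves the submodule $V_T = \sum_{K \subseteq T} V^K$, on which it induces a regular $\SL_{|T|}(\dbZ)$-module structure of degree at most $M$. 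If $g\lambda$ does not vanish on $V_T$, the base case applied inside $\SL(T)$ and $V_T$ yields the desired new $K_{k+1} \subseteq T$.

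The main obstacle, and where the bulk of the argument lies, is the case in which $g\lambda$ vanishes identically on $V_T$. Here one must inject new non-vanishing into $V_T$ using elements outside $\SL(T)$ without destroying the non-vanishing on the $V^{K_l}$. The natural strategy is to apply $E_{ij}$ with $i \in K_l$ and $j \in T$ (or with the roles reversed): by property (2), such an element shifts $V^{K_l}$ into $V^{(K_l \cup \{j\})\setminus\{i\}} + V^{K_l \cup \{j\}}$, producing components that meet $T$ and so potentially detect the non-vanishing. A careful case analysis in the spirit of the four-step argument used to prove Lemma~\ref{lemkey4} (and streamlined further in Lemma~\ref{lemkey5}) should allow one to re-establish non-vanishing on a slightly perturbed family of disjoint sets $K_l'$ while simultaneously producing a new $K_{k+1}$. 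Making this bookkeeping go through cleanly, in particular controlling how property (2) interacts with the existing non-vanishings and ensuring that one does not need to expand the $K_l$ beyond size $M$, is the most delicate part of the proof.
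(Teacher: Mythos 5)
Your reduction from the conjectured combinatorial proposition to Claim~\ref{claim:filtered} is correct and essentially reproduces the paper's Lemma~\ref{lemma:cc}: choose $d+1$ disjoint consecutive intervals $I_1,\ldots,I_{d+1}$ of length $M$ inside $\dbn$ (possible since $n\geq(d+1)M$), realize a permutation sending each disjoint set into an interval as a product of $F_{ij}$'s, invoke property (3) of regularity to transport non-vanishing on the graded pieces, and conclude with Lemma~\ref{claim1} (which applies since $M\geq d$ in both cases, as you verify). Your conjectured proposition is also the right target; it is equivalent in content to the paper's Claim~\ref{claim2}.

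However, the core of the argument --- the existence of $g$ in the orbit making the support contain $d+1$ disjoint sets --- is exactly where your proposal has a genuine gap, and you say so yourself. You propose induction on the number $r$ of disjoint sets, with the hard case being when $g\lambda$ vanishes on $V_T$ for $T$ the complement of the sets already found. Your plan to ``inject'' non-vanishing into $V_T$ by applying $E_{ij}$ with one index inside some $K_l$ and the other in $T$ is the natural thing to try, but controlling the perturbation is genuinely difficult: by property (2), $E_{ij}^{-1}v-v$ lands in $V^{(K_l\cup\{j\})\setminus\{i\}}+V^{K_l\cup\{j\}}$, and when $|K_l|>1$ these sets are neither contained in $T$ nor equal to $K_l$, so one can simultaneously fail to create a non-vanishing component in $V_T$ and destroy the non-vanishing on $V^{K_l}$. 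You correctly identify this bookkeeping as ``the most delicate part,'' but you do not resolve it, and it is not clear that the inductive scheme can be pushed through without expanding the sets $K_l$ or substantially reorganizing the argument.

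The paper sidesteps this entirely with a variational device. Its Proposition~\ref{prop:tech} is not a direct existence statement but an extremal one: fix $s$, maximize the ``disjointness functional'' $D(I_1,\ldots,I_s)=\sum|I_i|-\sum_{i\neq j}|I_i\cap I_j|$ over the $\SL_n(\dbZ)$-orbit of $\lambda$ and over $s$-tuples from the support, and then show that any maximizer has either pairwise disjoint sets or union equal to $\dbn$. The contradiction when neither holds uses exactly the $E_{ij}^{\pm1}$, $E_{ji}^{\pm1}$ perturbations you envisage, but the maximality hypothesis forces all the potentially offending terms in property (2) to lie outside the support, so the action is transparent; the decomposition $F_{ij}=E_{ij}E_{ji}^{-1}E_{ij}$ then produces a clean contradiction via property (3). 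Claim~\ref{claim2} then follows because, by the degree bound $|I_k|\leq M$ and $n\geq(d+1)M$, a non-disjoint $(d+1)$-tuple cannot cover $\dbn$. This extremal reformulation is the key idea your proposal is missing; without it, the induction you outline would require precisely the delicate case analysis you leave open.
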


\begin{Definition}\rm Let $\lam\in \Lambda_m$. Define $supp(\lam)$, the {\it support of $\lam$}, to be the set of all subsets $I\subseteq \dbn$ such that $\lam$ does not vanish on $L(m)^I$.
\end{Definition} 

Here is our next reduction.

\begin{Claim}
\label{claim2} For any nonzero $\lam\in\Lambda_m$ the $\SL_n(\dbZ)$-orbit of $\lam$ contains an element $\mu$ such that $supp(\mu)$ contains $d+1$ disjoint subsets.
\end{Claim}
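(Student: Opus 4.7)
The plan is to reduce Claim~\ref{claim2} to a stronger combinatorial statement, call it Proposition~\ref{prop:tech}, in which the blocks containing the disjoint support sets are prescribed in advance. First I would use Lemma~\ref{claim:trivial}: since $\deg L(m) \leq M$ and $n \geq (d+1)M$, we may partition $\dbn$ into $d+1$ pairwise disjoint blocks $B_1, \ldots, B_{d+1}$ each of cardinality at least $M$. Every subset contained in some $B_k$ automatically has size at most $M$, hence is eligible to appear in the support of any element of $\Lambda_m$.

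The stronger statement I would formulate is the following: for every nonzero $\lam \in \Lambda_m$, the $\SL_n(\dbZ)$-orbit of $\lam$ contains an element $\mu$ such that, for each $k \in \{1, \ldots, d+1\}$, there is a subset $J_k \subseteq B_k$ with $J_k \in supp(\mu)$. Granted this, Claim~\ref{claim2} is immediate: the sets $J_1, \ldots, J_{d+1}$ are pairwise disjoint because the $B_k$ are.

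For Proposition~\ref{prop:tech} I would induct on $k$, at stage $k$ producing $\mu_k$ in the orbit of $\lam$ whose support meets $2^{B_j}$ for every $j \leq k$. The base case uses that $supp(\lam)$ is nonempty (because $\sum_I L(m)^I = L(m)$ and $\lam \neq 0$), together with property~(3) of a regular module: the transpositions $F_{ij}$ permute the grading, so by composing several of them we may slide an arbitrary $I \in supp(\lam)$ into $B_1$. For the inductive step the idea is to apply a carefully chosen sequence of $E_{ij}^{\pm 1}$'s and $F_{ij}$'s that makes the restriction of $\mu_k$ to $\sum_{J \subseteq B_{k+1}} L(m)^J$ nonzero while retaining the support witnesses already established in $B_1, \ldots, B_k$. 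The principal tool here is axiom~(2), which pins down the error $E_{ij}v - v$ to lie in a specific pair of graded pieces, allowing fine control over which components of the support are affected.

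The main obstacle is precisely this inductive step. A naive attempt using only matrices supported on the $B_{k+1}$-indices fails: properties~(1) and~(2) together imply that any such matrix preserves the annihilator of $\sum_{J \subseteq B_{k+1}} L(m)^J$, so if $\mu_k$ already vanishes on this subspace, no amount of block-diagonal manipulation can create support inside $B_{k+1}$. One must therefore use matrices that mix indices across different blocks and then perform a delicate bookkeeping argument, in the spirit of the explicit step-by-step casework in the proof of Lemma~\ref{lemkey4}, ensuring that the ``spreading'' operations which plant new support in $B_{k+1}$ do not accidentally destroy the support in $B_1, \ldots, B_k$. I expect this bookkeeping to constitute the bulk of the proof of Proposition~\ref{prop:tech} and to be the one place where the full regular structure on $L(m)$ is essential.
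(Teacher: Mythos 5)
Your reduction is not the one the paper uses, and — more importantly — it contains an explicitly acknowledged gap at its heart: the inductive step that ``plants'' new support in $B_{k+1}$ while preserving the support already secured in $B_1,\ldots,B_k$. You correctly diagnose why block-diagonal operations cannot do this (by property~(1), an $E_{ij}$ with $i,j\in B_{k+1}$ fixes every $L(m)^J$ with $J\subseteq B_l$ for $l\neq k+1$, and by property~(2) it maps $\sum_{J\subseteq B_{k+1}}L(m)^J$ into itself, so it preserves the annihilator of that subspace). But you then defer the actual argument — the one that uses cross-block matrices — to ``delicate bookkeeping,'' and this bookkeeping is precisely where the proof has to live. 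As written, nothing guarantees that a cross-block $E_{ij}$ creating support in $B_{k+1}$ will not simultaneously kill the support you have established elsewhere, and naively iterating Lemma~\ref{lemkey4}-style casework does not obviously terminate.

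The paper resolves exactly this tension with an extremal argument rather than a block-by-block induction, and this is the idea your proposal is missing. Instead of fixing the blocks $B_1,\ldots,B_{d+1}$ in advance, the paper introduces a quantity $D(I_1,\ldots,I_s)=\sum|I_i|-\sum_{i\neq j}|I_i\cap I_j|$, sets $D_s(\mu)=\max$ over $s$-tuples in $supp(\mu)$, and chooses $\mu$ in the $\SL_n(\dbZ)$-orbit to maximize $D_s$. Proposition~\ref{prop:tech} then proves a dichotomy: a maximally disjoint $s$-tuple $(I_1,\ldots,I_s)$ for this $\mu$ is either disjoint or has $\bigcup I_i=\dbn$. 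The proof picks $i$ in an overlap $I_1\cap I_2$ and $j\notin\bigcup I_l$, and uses maximality to show that $I_k\cup\{j\}\setminus\{i\}$ and $I_k\cup\{j\}$ cannot lie in $supp(\mu)$; combined with properties~(1) and~(2) this forces all $E_{ij}^{\pm1},E_{ji}^{\pm1}$ to preserve the tuple and the value of $D_s$, and then $F_{ij}=E_{ij}E_{ji}^{-1}E_{ij}$ together with property~(3) yields a contradiction. Claim~\ref{claim2} then follows from the cardinality estimate $n\geq(d+1)M$ ruling out the $\bigcup I_i=\dbn$ branch, and the placement into consecutive blocks is handled afterwards, in Lemma~\ref{lemma:cc}, via the $F$'s. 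Keeping ``make the supports disjoint'' and ``move the supports to designated blocks'' as two separate steps, and replacing constructive induction with maximization of $D_s$, is what makes the argument go through; you should supply that (or an equivalent) to close the gap.
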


The following lemma shows that Claim~\ref{claim:filtered} follows from Claim~\ref{claim2}.

\begin{Lemma} 
\label{lemma:cc}
Take any $\lam\in\Lambda_m$ whose support contains $d+1$ disjoint subsets. Then the $\SL_n(\dbZ)$-orbit of $\lam$ contains an element $\mu$ satisfying the conclusion of Claim~\ref{claim:filtered}.
\end{Lemma}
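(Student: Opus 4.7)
The plan is to reduce everything to an elementary combinatorial claim about permuting the disjoint subsets in $\mathrm{supp}(\lam)$ into a standard position consisting of blocks of consecutive integers, using the subgroup of $\SL_n(\dbZ)$ generated by the elements $F_{ij}$ of \eqref{eij_def}. The key inputs will be Lemma~\ref{claim1} (which lets us pass from grading components $L(m)^J$ to the subgroups $L(m)_I$), part (3) of the definition of a regular $\SL_n(\dbZ)$-module (which says that products of the $F_{ij}$'s permute the grading components), and the degree bound for $L(m)$ given by Lemma~\ref{claim:trivial}(b).

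Concretely, I would proceed as follows. Let $J_1,\ldots,J_{d+1}$ be disjoint elements of $\mathrm{supp}(\lam)$. Since $L(m)$ has degree at most $M$ by Lemma~\ref{claim:trivial}(b), each $|J_k|\le M$, and since $M\ge d$ (as $M=md$ or $M=1+m(d-1)\ge d$), the inequality $n\ge (d+1)M$ from Lemma~\ref{claim:trivial}(a) lets me pick disjoint blocks of consecutive integers
\[
I_k=\{(k-1)M+1,\,(k-1)M+2,\,\ldots,\,kM\},\qquad k=1,\ldots,d+1,
\]
each of cardinality $M$. For each $k$, since $|J_k|\le M=|I_k|$, I can choose an injection $J_k\hookrightarrow I_k$; the disjointness of the $J_k$'s and the $I_k$'s allows me to assemble these injections into a single permutation $\sigma\in S_n$ satisfying $\sigma(J_k)\subseteq I_k$ for all $k$.

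Next, I realize $\sigma$ in $\SL_n(\dbZ)$. Factor $\sigma$ as a product of transpositions $\sigma=(i_1,j_1)\cdots(i_s,j_s)$ and set $g=F_{i_1 j_1}\cdots F_{i_s j_s}\in\SL_n(\dbZ)$. By property (3) of a regular module, applied iteratively, one has $g\cdot L(m)^I=L(m)^{\sigma(I)}$ for every $I\subseteq\dbn$ (the signs that appear at each step are irrelevant because they affect neither a subgroup nor whether a linear functional vanishes on it). Taking $\mu:=g\lam$, the definition of the action gives
\[
\mu(L(m)^{\sigma(J_k)})=\lam(g^{-1}L(m)^{\sigma(J_k)})=\lam(L(m)^{J_k})\ne 0,
\]
so $\sigma(J_k)\in\mathrm{supp}(\mu)$ for each $k$. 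Finally, since $\sigma(J_k)\subseteq I_k$ and $|I_k|=M\ge d$, Lemma~\ref{claim1} yields $L(m)^{\sigma(J_k)}\subseteq L(m)_{I_k}$, and therefore $\mu$ does not vanish on $L(m)_{I_k}$. As the $I_k$ are disjoint and each consists of consecutive integers, $\mu$ satisfies the conclusion of Claim~\ref{claim:filtered}.

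The only potentially delicate point is the passage from a set-theoretic permutation $\sigma$ to an element of $\SL_n(\dbZ)$ that realizes $\sigma$ on the grading; this is what prompts the definition of a regular module and is handled cleanly by property (3). Once that is in place, the argument is purely a pigeonhole-style placement of the disjoint sets $J_k$ into the standard blocks $I_k$, which is feasible precisely because of the quantitative bound $n\ge (d+1)M$ built into the hypotheses of Theorem~\ref{coh_main_new}.
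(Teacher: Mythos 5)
Your proposal is correct and essentially reproduces the paper's own proof: same choice of standard blocks $I_k$ of size $M$, same use of Lemma~\ref{claim:trivial}(b) to bound $|J_k|$, same realization of a permutation via a product of the $F_{ij}$ and property (3) of regularity, and the same appeal to Lemma~\ref{claim1} to pass from grading components to the filtered subgroups $L(m)_{I_k}$.
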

\begin{proof} 
For $1\leq k\leq d+1$ define $$I_{k}=\{(k-1)M+1,(k-1)M+2,\ldots,kM\}.$$
The sets $I_1,\ldots, I_{d+1}$ are disjoint and contained in $\dbn$ since $n\geq (d+1)M$. 

Let $J_1,\ldots, J_{d+1}$ be disjoint subsets in $supp(\lam)$.
By Lemma~\ref{claim:trivial} we have $|J_k|\leq M=|I_k|$. Hence there exists a permutation $\sigma\in S_n$
such that $\sigma(J_k)\subseteq I_k$ for each $k$. Write $\sigma$ as a product of transpositions $\sigma=\prod (i_t,j_t)$ and let 
$g=\prod F_{i_t j_t}$. By condition (3) in the definition of a regular module we have 
$$(g\lam)(L(m)^{\sigma(J_k)})=\lam(g^{-1}\!.\, L(m)^{\sigma(J_k)})=\lam(L(m)^{\sigma^{-1}(\sigma(J_k))})=\lam(L(m)^{J_k})\neq 0.$$
Since $|I_k|=M\geq d$ and $\sigma(J_k)\subseteq I_k$, Lemma~\ref{claim1} implies that $g\lam$ does not vanish on $L(m)_{I_k}$. Since $I_1,\ldots, I_{d+1}$ consist of consecutive integers, $\mu=g\lam$ has the desired property.
\end{proof}

Claim~\ref{claim2} easily follows from Proposition~\ref{prop:tech} below.

\begin{Definition}\rm Let $0\neq \lam\in\Lambda_m$ and $s\in\dbN$. If $I_1,\ldots, I_s$ are elements of $supp(\lam)$, define
$$D(I_1,\ldots, I_s)=\sum_{i=1}^s |I_i|-\sum_{i\neq j} |I_i\cap I_j|.$$
The maximum possible value of $D(I_1,\ldots, I_s)$ will be denoted by $D_s(\lam)$, and any $s$-tuple in $supp(\lam)$ on which this maximum
is achieved will be called {\it maximally disjoint for $\lam$}.
\end{Definition}

\begin{Proposition}
\label{prop:tech}
Fix $0\neq \lam\in\Lambda_m$ and $s\in\dbN$. Let $\mu$ be an element in the  $\SL_n(\dbZ)$-orbit of $\lam$ such that $D_s(\mu)$ is maximum possible,
and let $I_1,\ldots, I_s\in supp(\mu)$ be maximally disjoint for $\mu$. Then
one of the following holds:
\begin{itemize}
\item[(i)] $I_1,\ldots, I_s$ are disjoint;
\item[(ii)] $\cup I_i=\dbn$.
\end{itemize}
\end{Proposition}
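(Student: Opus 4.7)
The plan is to argue by contradiction. Suppose that neither (i) nor (ii) holds, so $\bigcup_{l=1}^{s} I_l \subsetneq \dbn$ and the $I_l$ are not pairwise disjoint. Then I can pick $j$ lying in at least two of the $I_l$'s (WLOG $j\in I_1\cap I_2$) and $k\in\dbn\setminus\bigcup_l I_l$. The aim is to produce $g\in\SL_n(\dbZ)$ with $D_s(g\mu)>D_s(\mu)$, contradicting the maximality of $D_s(\mu)$ in the $\SL_n(\dbZ)$-orbit of $\lam$.

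The first step would use only the maximality of the tuple $(I_1,\ldots,I_s)$ in $supp(\mu)^s$ to derive two structural constraints. For each $l$ with $j\in I_l$ set $J_l:=(I_l\setminus\{j\})\cup\{k\}$; replacing $I_l$ with $J_l$ in the tuple preserves $\sum_{l'}|I_{l'}|$, while the overlap with each $I_{l'}$ containing $j$ strictly decreases by $1$ (since $k$ belongs to no $I_{l'}$). A short count gives an increase in the $D$-value of $2\cdot\#\{l'\neq l:\,j\in I_{l'}\}\geq 2$, forcing $J_l\notin supp(\mu)$. A parallel argument, replacing $I_l$ by $I_l\cup\{k\}$ (which raises $\sum|I_{l'}|$ by $1$ while adding no new pairwise intersections), forces $I_l\cup\{k\}\notin supp(\mu)$ for every $l$.

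Next, take $g=E_{kj}$. For $l$ with $j\notin I_l$, condition~(1) yields $g|_{L(m)^{I_l}}=\mathrm{id}$; for $l$ with $j\in I_l$, condition~(2) places $g^{-1}v-v$ (for $v\in L(m)^{I_l}$) into $L(m)^{J_l}+L(m)^{I_l\cup\{k\}}$, on which $\mu$ vanishes by the previous paragraph. Hence $(g\mu)|_{L(m)^{I_l}}=\mu|_{L(m)^{I_l}}$ for every $l$, so all $I_l\in supp(g\mu)$ and $D_s(g\mu)=D_s(\mu)$ by maximality. This shows that the maximum is robust under $E_{kj}$, but to close the argument one needs to introduce a genuinely new set into the support.

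The decisive — and expectedly most delicate — step is to exhibit $g'\in\SL_n(\dbZ)$ such that $J_2\in supp(g'\mu)$ \emph{while} every $I_l$ with $l\neq 2$ also remains in $supp(g'\mu)$; the tuple $(I_1,J_2,I_3,\ldots,I_s)$ will then sit in $supp(g'\mu)^s$ with $D$-value at least $D_s(\mu)+2$ (by the computation of the first step, applied with $l=2$), giving the contradiction. A natural candidate is $g'=E_{jk}F_{jk}$. A direct computation using conditions~(2) and (3) together with the vanishing of $\mu$ on $L(m)^{J_2}$ and $L(m)^{I_2\cup\{k\}}$ yields the clean formula $(g'\mu)(F_{jk}u)=\mu(u)$ for every $u\in L(m)^{I_2}$, so choosing $u$ with $\mu(u)\neq 0$ places $J_2$ into $supp(g'\mu)$. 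The remaining verification — that $I_1$ (the problematic case, since $j\in I_1$) and the $I_l$'s with $l\geq 3$ stay in $supp(g'\mu)$ — is where the main difficulty concentrates: for $l$ with $j\in I_l$ one must track the $L(m)^{I_l}$-component of $F_{jk}^{-1}E_{jk}^{-1}v$ for $v\in L(m)^{I_l}$, exploit the defining identity $F_{jk}=E_{jk}E_{kj}^{-1}E_{jk}$ to rewrite this component modulo the $\mu$-annihilated subspaces $L(m)^{J_l}$ and $L(m)^{I_l\cup\{k\}}$, and thereby reduce the non-vanishing to $\mu|_{L(m)^{I_l}}\neq 0$. Should this single-step construction fall short in some degenerate configuration, one can instead iterate by applying $g=E_{kj}$ (from the previous paragraph) and $g'=E_{jk}F_{jk}$ in sequence, since each operation preserves the constraints derived in the first step.
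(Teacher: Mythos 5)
Your Steps~1 and~2 correctly reproduce the first two-thirds of the paper's argument: the maximal disjointness of $(I_1,\ldots,I_s)$ forces $J_l:=(I_l\setminus\{j\})\cup\{k\}\notin supp(\mu)$ (for $l$ with $j\in I_l$) and $I_l\cup\{k\}\notin supp(\mu)$ (for all $l$), and then conditions (1)--(2) show that $(g\mu)|_{L(m)^{I_l}}=\mu|_{L(m)^{I_l}}$ for every $l$ and every $g\in\{E_{jk}^{\pm1},E_{kj}^{\pm1}\}$, whence $D_s(g\mu)=D_s(\mu)$ and $(I_1,\ldots,I_s)$ stays maximally disjoint for $g\mu$. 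These match the paper exactly.

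The decisive step, however, has a genuine gap, and your own flag that the verification for $l\neq2$ with $j\in I_l$ ``is where the main difficulty concentrates'' is the crux of it. With $g'=E_{jk}F_{jk}$, one computes for $v\in L(m)^{I_l}$ (with $j\in I_l$, $l\neq 2$) that $(g'\mu)(v)=\mu(F_{jk}^{-1}E_{jk}^{-1}v)$. Writing $E_{jk}^{-1}v=v+w$ with $w\in L(m)^{J_l}+L(m)^{I_l\cup\{k\}}$, one finds $F_{jk}^{-1}v\in L(m)^{J_l}$ (killed by $\mu$), and after killing the part of $F_{jk}^{-1}w$ landing in $L(m)^{I_l\cup\{k\}}$ one is left with $\mu(F_{jk}^{-1}w_1)$ where $w_1$ is the $L(m)^{J_l}$-component of $w$; this lives in $L(m)^{I_l}$ but is an \emph{uncontrolled} element, not $v$, and the axioms of a regular module give no reason for $\mu(F_{jk}^{-1}w_1)\neq0$. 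So $I_1\in supp(g'\mu)$ does not follow, and the tuple $(I_1,J_2,I_3,\ldots,I_s)$ is not exhibited in $supp(g'\mu)^s$. The fallback suggestion to ``iterate $g$ and $g'$'' does not resolve this, as the same difficulty recurs verbatim for the new $\mu$.

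The paper's proof avoids the need to exhibit the new set $J_2$ and all old sets $I_l$ simultaneously in the support of a single $g'\mu$. Instead it iterates your Step~2 observation three times, for $E_{jk}$, then $E_{kj}^{-1}$, then $E_{jk}$, noting at each stage that the Step~1 constraints carry over because $(I_1,\ldots,I_s)$ is maximally disjoint for the current $\mu'$. This yields that $(I_1,\ldots,I_s)$ is maximally disjoint for $F_{jk}\mu=E_{jk}E_{kj}^{-1}E_{jk}\mu$; in particular $J_1=(j,k)I_1\notin supp(F_{jk}\mu)$, since replacing $I_1$ by $J_1$ would strictly raise $D$. But condition~(3) forces $(F_{jk}\mu)(L(m)^{J_1})=\mu(F_{jk}^{-1}L(m)^{J_1})=\mu(L(m)^{I_1})\neq0$, so $J_1\in supp(F_{jk}\mu)$ --- the desired contradiction, obtained without ever needing to control the $L(m)^{I_l}$-component of $g'^{-1}v$. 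You have all the ingredients; the missing idea is to iterate the stability statement and then let condition~(3) alone produce the forbidden element of the support.
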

Before proving Proposition~\ref{prop:tech}, we shall deduce Claim~\ref{claim2} from it. 

\begin{proof}[Proof of Claim~\ref{claim2}]
Let us apply Proposition~\ref{prop:tech} to $\lam$ (from the statement of Claim~\ref{claim2}) and $s=d+1$, and let $I_1,\ldots, I_{d+1}$
be as in the statement of  Proposition~\ref{prop:tech}. By Lemma~\ref{claim:trivial} we have $|I_k|\leq M\leq \frac{n}{d+1}$. If $I_1,\ldots, I_{d+1}$ are not disjoint, then $|\cup_{i=1}^{d+1} I_i|< (d+1)\cdot M\leq n$, so $\cup I_i\neq \dbn$, which contradicts Proposition~\ref{prop:tech}.
\end{proof}

We will now prove Proposition~\ref{prop:tech}, thereby completing the proof of Theorem~\ref{coh_main_new}.

\begin{proof}[Proof of Proposition~\ref{prop:tech}]
Assume that neither (i) nor (ii) holds. Without loss of generality we can assume that $|I_1\cap I_2|\neq\emptyset$, and choose
$i\in I_1\cap I_2$ and $j\not\in \cup_{l=1}^s I_l$ (with $j\in\dbn$). Since $(I_1,\ldots, I_s)$ is maximally disjoint for $\mu$, 
its support $supp(\mu)$ contains neither $I_k\cup\{j\}\setminus\{i\}$ nor $I_k\cup \{j\}$ for any $k$. Indeed, if $supp(\mu)$
contained one of those sets, replacing
$I_k$ by $I_k\cup\{j\}\setminus\{i\}$ or $I_k\cup \{j\}$ in $(I_1,\ldots, I_s)$ would produce an $s$-tuple in $supp(\mu)$ 
with a larger value of the function $D$, which is impossible.

Now take any $g\in \{E_{ij}^{\pm 1}, E_{ji}^{\pm 1}\}$. We claim that $supp(g\mu)$ contains $I_k$ for any $k$.
Indeed, by assumption $I_k\in supp(\mu)$, so $\mu(x)\neq 0$ for some $x\in L(m)^{I_k}$. We know that $j\not\in I_k$.
If $i\in I_k$, then by condition (2) in the definition of a regular module, 
$$g^{-1}x-x\in L(m)^{I_k\cup\{j\}\setminus\{i\}}+L(m)^{I_k\cup \{j\}}.$$
The latter containment is also true if $i\not\in I_k$ (in which case $g^{-1}x=x$ by condition (1)).
Since $supp(\mu)$ contains neither $I_k\cup\{j\}\setminus\{i\}$ nor $I_k\cup \{j\}$, we have $\mu(g^{-1}x-x)=0$.
Hence $(g\mu)(x)=\mu(g^{-1}x)=\mu(x)\neq 0$, so $I_k\in supp(g\mu)$.

Thus $supp(g\mu)$ contains $I_1,\ldots, I_s$. Since $(I_1,\ldots, I_s)$ is maximally disjoint for
$\mu$, we must have $D_s(g\mu)\geq D(I_1,\ldots, I_s)=D_s(\mu)$. On the other hand, 
$D_s(g\mu)\leq D_s(\mu)$ by the choice of $\mu$, so $D_s(g\mu)= D_s(\mu)$ and $(I_1,\ldots, I_s)$ is also maximally disjoint for
$g\mu$.

Applying the same argument to $E_{ij}\mu$, we conclude that $(I_1,\ldots, I_s)$ is also maximally disjoint for $E_{ji}^{-1}E_{ij}\mu$
and likewise maximally disjoint for $E_{ij}E_{ji}^{-1}E_{ij}\mu$. In particular, $I_1\cup\{j\}\setminus\{i\}\not\in supp (E_{ij}E_{ji}^{-1}E_{ij}\mu)$. On the other hand,
$E_{ij}E_{ji}^{-1}E_{ij}=F_{ij}$, and condition (3) in the definition of a regular module implies that
$supp(F_{ij}\mu)$ contains $(i,j)I_1=I_1\cup\{j\}\setminus\{i\}$, a contradiction.
\end{proof}

\section{Verifying the hypotheses of Theorem~\ref{coh_main_new} for the Torelli groups.} 
\label{sec:torelli}

Let $g\geq 0$, and let
$\Sigma_{g}^1$ be an orientable surface of genus $g$ with $1$ boundary component. The mapping class group $\Mod_{g}^1=\Mod(\Sigma_{g}^{1})$ is defined as the subgroup of orientation preserving homeomorphisms of $\Sigma_{g}^{1}$ which fix the boundary $\partial \Sigma_{g}^{1}$ pointwise modulo the isotopies which fix $\partial \Sigma_{g}^{1}$ pointwise. 

Choose a base point $p_0\in \partial \Sigma_g^1$.
Since $\Sigma_g^1$ has a bouquet of $2g$ circles as a deformation retract, the fundamental group $\pi=\pi_1(\Sigma_g^1, p_0)$ is free of rank $2g$.
The action of $\Mod_g^{1}$ on $\Sigma_g^1$ induces an action on $\pi$ and thus we obtain a homomorphism $\iota:\Mod_g^1\to\Aut(\pi)$. 
It is well known that $\iota$ is injective and $\Im\iota$ is equal to the stabilizer of the element $\prod\limits_{i=1}^g [x_{2i-1},x_{2i}]\in\pi$,
for a suitable choice of basis $\{x_1,\ldots, x_{2g}\}$ of $\pi$.
For each $k\in\dbN$ define $\calI_{g}^1(k)$ to be the kernel of the induced map $\Mod_g^1\to\Aut(\pi/\gamma_{k+1}\pi)$.

The filtration $\{\calI_{g}^1(k)\}_{k=1}^{\infty}$ is called the {\it Johnson filtration}, and the subgroups $\calI_{g}^1=\calI_{g}^1(1)$ and 
$\calK_{g}^1=\calI_{g}^1(2)$ are known as the {\it Torelli subgroup} and the {\it Johnson kernel}, respectively. 
Note that $\calI_{g}^1$ can also be defined as the set of all elements of $\Mod_{g}^1$ which act trivially
on the integral homology group $H_1(\Sigma_g^1)$.
\vskip .12cm
For the rest of this section we set $\Sigma=\Sigma_g^1$, $\calM=\Mod_g^1$, $\,\,\calI=\calI_g^1=\calI_g^1(1)$ and  $\calK=\calK_g^1=\calI_g^1(2)$. 
Starting in \S~\ref{sec:abBCJ}, we will assume that $g\geq 3$.

Our goal is to verify the hypotheses of Theorem~\ref{coh_main_new} for $\Gamma=\calM$, $G=\calI$, $n=g$ and $d=3$. Combined with Proposition~\ref{cent_exist}, this will prove Theorem~\ref{OK} for the Torelli group and Theorem~\ref{coh_main}(c) and thus will complete the proofs of Theorems~\ref{OK}~and~\ref{coh_main}.

The homomorphism $\phi:\SL_g(\dbZ)\to \calM/\calI$ to be used in Theorem~\ref{coh_main_new} will be defined in 
\S~\ref{sec:Torreg} (assuming the canonical isomorphism $\calM/\calI\cong \Sp(V)$ defined in \S~\ref{sec:spv}). 

\subsection{The  $\dbg$-group structure on $\calM$}
\label{section:CP}
We start by recalling the construction from \cite[\S 4.1]{CP}. It will be convenient to think of $\Sigma$ as a (closed) disk with $g$ handles attached; call the handles $H_1,\ldots, H_g$. Choose disjoint subsurfaces $X_1,\ldots, X_g$, each homeomorphic to $\Sigma_{1}^1$, such that $X_i$ contains $H_i$, and let $Y=\Sigma\setminus\cup_{i=1}^g {\rm Int}(X_i)$ where ${\rm Int}(X_i)$ is the interior of $X_i$. 
Fix a point $p\in {\rm Int}(Y)$, and for each $1\leq i\leq g$ choose a simple path $\delta_i$ in $Y$ which connects $p$ to a point on $\partial X_i$ such that the following hold
(see Figure~1):
\begin{itemize}
\item[(i)] the paths $\delta_i$ are disjoint apart from $p$;
\item[(ii)] if $C$ is a small circle around $p$, then the intersection points
$C\cap\delta_1,\ldots, C\cap \delta_g$ appear in the clockwise order.
\end{itemize}

\begin{figure}
\label{fig1}
\begin{tikzpicture}[scale=.5]

\draw[ultra thick]  (-0.5,0) ellipse (8 and 4);
\draw[ball color=black]  (-7,0) circle (.1);
\draw[ultra thick]  (-3,0) circle (1);
\draw[ultra thick]  (1,0) node (v2) {} circle (1);
\draw[ultra thick]  (5,0) circle (1);
\node (v1) at (-7,0) {};
\node  at (5.1,3.5) {$\partial \Sigma$};
\node  at (-7.1,-.5) {$p$};
\node  at (-1.2,-0.1) {$\partial X_1$};
\node  at (2.8,-0.1) {$\partial X_2$};
\node  at (6.8,-0.1) {$\partial X_3$};
\node  at (-5,0) {$\delta_1$};
\node  at (-5,-1.5) {$\delta_2$};
\node  at (-5,-2.5) {$\delta_3$};
\draw  plot[smooth, tension=.7] coordinates {(v1) (-3,-1)};
\draw  plot[smooth, tension=.7] coordinates {(v1) (-3,-2) (1,-1)};
\draw  plot[smooth, tension=.7] coordinates {(v1) (-3,-3) (5,-1)};
\end{tikzpicture}
\caption{}
\end{figure}

Now for each $I\subseteq \dbg$ define $S_I$ to be a closed regular neighborhood of 
$\cup_{i\in I}(\delta_i\cup X_i)$. Then $S_I$ is well defined up to isotopy (see Figure~2). 

\begin{figure}
\label{fig2}
\begin{tikzpicture}[scale=0.3]

\draw[ultra thick]  (0,0) ellipse (12 and 4);
\draw[ultra thick]  (-8,0) circle (1);
\draw[ultra thick]  (-4,0) circle (1);
\draw[ultra thick]  (0,0) circle (1);
\draw[ultra thick]  (4,0) circle (1);
\draw[ultra thick]  (8,0) circle (1);
\draw (-6,0) arc (0:90:-1.5);
\draw (-6,0) arc (0:90:1.5);
\draw (-9.5,0) arc (360:270:-1.5);
\draw (-9.5,0) arc (0:90:-2.5);
\draw[thin] (-8,1.5) -- (-7.5,1.5);
\draw[thin] (-7,-2.5) -- (3,-2.5);
\draw (3,-2.5) arc (270:360:2.5);
\draw (5.5,0) arc (0:90:1.5);
\draw[thin] (-4.5,-1.5) -- (-3.5,-1.5);
\draw (-3.5,-1.5) arc (270:360:1.5);
\draw (-2,0) arc (180:90:1.5);
\draw[thin] (-0.5,1.5) -- (4,1.5);
\node at (0,2.25) {$S_{134}$};

\end{tikzpicture}
\caption{The surface $S_{134}$ perturbed by an isotopy (only part of the surfaces inside $Y$ is shown)}
\end{figure}
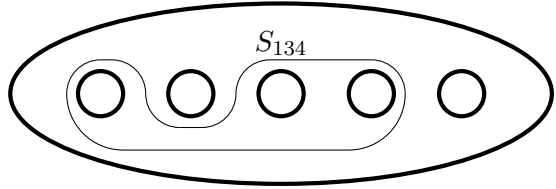
The following properties (1)-(4) are straightforward. Given two subsurfaces $R$ and $S$, we will say that certain relation between $R$ and $S$ holds up to isotopy if there exist subsurfaces isotopic to $R$ and $S$,
respectively, satisfying the stated relation:

\begin{itemize}
\item[(1)] $S_{\dbg}$ is isotopic to $\Sigma$;
\item[(2)] if $I\subseteq J$, then $S_I$ is contained in $S_J$ up to isotopy;
\item[(3)] if $I$ and $J$ are disjoint and $I$ consists of consecutive integers, then
$S_I$ and $S_J$ are disjoint up to isotopy;
\item[(4)] for any disjoint $I,J$ there exists a subsurface $S'_I$ homeomorphic to $S_I$ which is disjoint from $S_J$ and
such that $H_1(S_I)$ and $H_1(S'_I)$ have the same image in $H_1(\Sigma)$. 
\end{itemize}

Now define $\calM_I$ to be the subgroup of $\calM$ consisting of mapping classes which have a representative supported on $S_I$, and let $\calI_I=\calM_I\cap \calI$. Properties (1)-(3) above clearly imply that $\calM$ and $\calI$ are partially commuting $\dbg$-groups (here we use the obvious fact that homeomorphisms supported on disjoint surfaces commute). The fact that $\calI$ is weakly commuting follows from (4) and 
\cite[Lemma~4.2(ii)]{CP}.

A deep result of Church and Putman~\cite[Proposition~4.5]{CP} based on earlier work of Putman~\cite{Pu3} asserts
that if $g\geq 3$, then $\calI=\la \calI_I: |I|=3\ra$, so $\calI$ is generated in degree $3$ as a $\dbg$-group.
\vskip .1cm

In order to check the remaining conditions in Theorem~\ref{coh_main_new}, namely conditions (ii) and (iii), we need some preparations.

\subsection{$H_1(\Sigma)$ as an $\Sp_{2g}(\dbZ)$-module}
\label{sec:spv}
Let $V=H_1(\Sigma)$. Then $V$ is a free abelian group of rank $2g$ endowed with the canonical symplectic form $$([\alpha],[\beta])\mapsto
[\alpha]\cdot [\beta]$$ where $[\alpha]\cdot [\beta]$ is the algebraic intersection number between the closed curves $\alpha$ and $\beta$ on $\Sigma$. Clearly the action of $\calM/\calI$ on $V$ preserves this form, so there is a canonical group homomorphism
$\calM/\calI\to \Sp(V)$ where $\Sp(V)$ is the group of automorphisms of $V$ preserving the above form. It is well known that this homomorphism is an isomorphism, so from now on we will identify $\calM/\calI$ with $\Sp(V)$.

\vskip .12cm
For the rest of this section we will assume that $g\geq 3$.

In the next two subsections (\S~\ref{sec:abBCJ}, \S~\ref{sec:abfull}) we will describe the abelianization $\calI^{ab}$ 
as an $\Sp(V)$-module. This description was obtained by Johnson in \cite{Jo4} using earlier works of Birman and Craggs~\cite{BC}
and Johnson~\cite{Jo1,Jo2}.

\subsection{Johnson and Birman-Craggs-Johnson homomorphisms}
\label{sec:abBCJ}

Before describing $\calI^{ab}$, we need to introduce two smaller abelian quotients of $\calI$, the largest torsion-free quotient and the largest quotient of exponent $2$. These quotients are the images of the {\it Johnson homomorphism} (denoted by $\tau$) and the {\it Birman-Craggs-Johnson homomorphism}
(denoted by $\sigma$), respectively. 
We refer the reader to Johnson's papers \cite{Jo2} and \cite{Jo1} for the conceptual definition of $\tau$ and $\sigma$. For our purposes it will be sufficient to know the values of $\tau$ and $\sigma$ on the generating set for $\calI$ consisting of $1$-BP maps.

For each simple closed curve $\gamma$ on $\Sigma$ denote by $T_{\gamma}\in \calM$ the Dehn twist about $\gamma$. 
\begin{Definition}
\rm
Let $(\gamma,\delta)$ be a pair of disjoint non-separating simple closed curves on $\Sigma$. It is called a {\it bounding pair of genus $k$} if
\begin{itemize}
\item[(i)] $\gamma$ and $\delta$ are homologous to each other and non-homologous to zero;
\item[(ii)] the union $\gamma\cup \delta$ separates $\Sigma$; moreover, if $\Sigma_{\gamma,\delta}$
is the connected component of $\Sigma\setminus (\gamma\cup\delta)$ which does not contain $\partial \Sigma$, then
$\Sigma_{\gamma,\delta}$ has genus $k$.
\end{itemize}
If $(\gamma,\delta)$ is a bounding pair of genus $k$, then the map $T_{\gamma}T_{\delta}^{-1}$ always lies in $\calI$ and is called
a {\it $k$-BP map}.
\end{Definition}
Johnson~\cite{Jo0} proved that $\calI$ is generated by $1$-BP maps.

\vskip .1cm  
The {\it Johnson homomorphism} is a surjective homomorphism $\tau:\calI\to \wedge^3 V$ with $\Ker(\tau)=\calK$ (the Johnson kernel)
introduced in \cite{Jo2}. The induced map $\calI^{ab}\to \wedge^3 V$ (which we will also denote by $\tau$) is a homomorphism of $\Sp(V)$-modules. 
In \cite{Jo4}, Johnson proved that $\calI/\calK$ is the largest torsion-free abelian quotient of $\calI$.

We will now give an explicit formula for the values of $\tau$ on $1$-BP maps.
Let $(\gamma,\delta)$ be a bounding pair of genus $1$, and assume that $\gamma$ is oriented in such a way that $\Sigma_{\gamma,\delta}$ is on its left. Let $c\in V=H_1(\Sigma)$ be the homology class of $\gamma$, and choose any $a,b\in H_1(\Sigma_{\gamma,\delta})\subset H_1(\Sigma)$ such that $a\cdot b=1$. Then 
\begin{equation}
\label{johnhom}
\tau(T_{\gamma}T_{\delta}^{-1})=a\wedge b\wedge c.
\end{equation}
For the justification of \eqref{johnhom} and the related formula \eqref{bcjhom} below see \cite[\S~2]{Jo4} and references therein.

We now turn to the Birman-Craggs-Johnson homomorphism. Let $V_{\dbF_2}=V\otimes \dbF_2\cong H_1(\Sigma,\dbF_2)$.
Let $B$ be the ring of polynomials over $\dbF_2$ in formal variables $X=\{\overline v: v\in H_{\dbF_2}\setminus\{0\}\}$ subject to relations
\begin{itemize}
\item[(R1)] $\overline{v+w}=\overline v+\overline w+v\cdot w$ for all $\overline v,\overline w\in X$
\item[(R2)] $\overline v^2=\overline v$ for all $\overline v\in X$.
\end{itemize}
The group $\Sp(V)$ has a natural action on $B$ by ring automorphisms 
such that
$g(\overline v)=\overline{gv}$ for all $\overline v\in X$. Let $B_n$ be the subspace of $B$ consisting of elements representable by a polynomial of degree at most $n$. Then each $B_n$ is an $\Sp(V)$-submodule, and it is easy to see that $B_n/B_{n-1}\cong \wedge^n V_{\dbF_2}$ for each $n\geq 1$ and $B_0\cong \wedge^{0} V_{\dbF_2}$ (as $\Sp(V)$-modules). 

In \cite{Jo1}, Johnson constructed a surjective homomorphism 
$\sigma:\calI\to B_3$ which induces an $\Sp(V)$-module homomorphism $\calI^{ab}\to B_3$. Following \cite{BF}, we will refer to $\sigma$
as the {\it Birman-Craggs-Johnson (BCJ) homomorphism}. Here is the formula for $\sigma$ on $1$-BP maps:
\begin{equation}
\label{bcjhom}
\sigma(T_{\gamma}T_{\delta}^{-1})=\overline a\overline b (\overline c+1).
\end{equation}
where $a,b,c$ are defined as in \eqref{johnhom} except that this time they are mod $2$ homology classes.

\subsection{The full abelianization}  
\label{sec:abfull}

Let $\alpha: \wedge^3 V\to \wedge^3 V_{\dbF_2}$ be the natural reduction map, and let 
$\beta: B_3\to \wedge^3 V_{\dbF_2}$ be the unique linear map such that $\beta(B_2)=0$ and 
$\beta (\overline u\, \overline v\, \overline w)=u \wedge v\wedge w$ for all $u,v,w\in V_{\dbF_2}$. Clearly 
$\alpha$ and $\beta$ are both $\Sp(V)$-module homomorphisms. Let
\begin{equation}
\label{eq:W}
W=\{(u,v)\in \wedge^3 V\oplus B_3: \alpha(u)=\beta(v)\}.
\end{equation}

The following result is a reformulation of \cite[Thm~3]{Jo4}.

\begin{Theorem}[\cite{Jo4}]
\label{thm:fullab}
The map $(\tau,\sigma): \calI^{ab}\to \wedge^3 V\oplus B_3$ given by $g[G,G]\mapsto (\tau(g),\sigma(g))$ is injective
and $\Im((\tau,\sigma))=W$.
\end{Theorem}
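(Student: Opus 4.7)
The plan is to split the theorem into two parts: (I) verify that the image of $(\tau,\sigma)$ lies in $W$, and (II) invoke Johnson's computation of $\calI^{ab}$ in \cite[Thm~3]{Jo4} to upgrade this containment to an isomorphism onto $W$. Part (I) is an elementary compatibility check, while part (II) is where all the genuine content lives, so this is the main obstacle.

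For (I), first recall that by \cite{Jo0}, the group $\calI$ is generated by $1$-BP maps. Since $\alpha\circ\tau$ and $\beta\circ\sigma$ both factor through $\calI^{ab}$ and are $\Sp(V)$-module maps, it suffices to check that they agree on a single $1$-BP map $T_\gamma T_\delta^{-1}$. With $a,b,c\in V_{\dbF_2}$ as in \eqref{bcjhom}, the formula \eqref{johnhom} gives
\[
\alpha(\tau(T_\gamma T_\delta^{-1})) \;=\; a\wedge b\wedge c \quad\text{in }\wedge^3 V_{\dbF_2},
\]
while \eqref{bcjhom} together with the definition of $\beta$ gives
\[
\beta(\sigma(T_\gamma T_\delta^{-1})) \;=\; \beta(\overline a\,\overline b\,\overline c) + \beta(\overline a\,\overline b) \;=\; a\wedge b\wedge c,
\]
since $\overline a\,\overline b$ has degree at most $2$ and hence lies in $\Ker\beta=B_2$. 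This shows $(\tau,\sigma)(\calI^{ab})\subseteq W$.

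For (II), the plan is to appeal directly to Johnson's Theorem 3 in \cite{Jo4}. That theorem computes the two ``halves'' of $\calI^{ab}$ separately: the maximal torsion-free quotient of $\calI^{ab}$ is $\calI/\calK\cong \wedge^3 V$ with isomorphism induced by $\tau$ (so $\Ker\tau$ is precisely the torsion subgroup of $\calI^{ab}$), and the maximal exponent-$2$ quotient of $\calI^{ab}$ is $B_3$ with isomorphism induced by $\sigma$ (so $\Ker\sigma$ is $2\calI^{ab}$). Johnson further pins down the intersection $\Ker\tau\cap\Ker\sigma = 0$ by explicit generator-and-relation analysis. Granting these facts, the kernel of $(\tau,\sigma)$ is trivial, which gives injectivity. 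For surjectivity onto $W$, the key input from Johnson's theorem is a count: the mod-$2$ reductions of $\tau$ and $\sigma$ both hit $\wedge^3 V_{\dbF_2}$, and the ranks of the torsion-free and $2$-torsion parts of $\calI^{ab}$ combine to match the rank of $W$ exactly (indeed, $|W|/|\wedge^3 V| = |B_3|/|\wedge^3 V_{\dbF_2}|$, and the right-hand side is the order of the $2$-torsion of $\calI^{ab}$ by Johnson). Thus the injective map $(\tau,\sigma)\colon \calI^{ab}\hookrightarrow W$ is forced to be surjective, completing the proof.

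The hard step, and the one that the paper outsources to \cite{Jo4}, is the identification of the torsion subgroup of $\calI^{ab}$ with the image of $\sigma$ and the verification that $\tau$ and $\sigma$ have trivial common kernel; without Johnson's detailed analysis of Dehn-twist relations this would be inaccessible. In particular, our plan does not attempt to reprove Johnson's theorem — we treat it as a black box and merely reorganize its conclusion into the fiber-product form \eqref{eq:W}, which is the presentation most convenient for verifying condition (ii) of Theorem~\ref{coh_main_new} in the subsequent sections.
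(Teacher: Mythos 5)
The paper does not actually prove this theorem: it is stated with the citation tag \texttt{[\textbackslash cite\{Jo4\}]} and introduced by the sentence ``The following result is a reformulation of [Jo4, Thm~3],'' and no proof environment follows. So the ``official'' justification is simply to quote Johnson's abelianization theorem and observe that the fiber-product description $W$ encodes it. Your proposal takes the same route in spirit, so you are not doing anything the paper doesn't also do (or rather, doesn't do).

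That said, a few of the details you sketch in Part (II) are shakier than you suggest. The claims that $\Ker\tau$ equals the torsion subgroup of $\calI^{ab}$ and that $\Ker\sigma=2\calI^{ab}$ do not by themselves give $\Ker\tau\cap\Ker\sigma=0$; that last equality is exactly the nontrivial content that $\calI^{ab}$ embeds into the fiber product rather than merely mapping onto each factor, and you correctly flag that Johnson establishes it by generator-and-relation analysis. More problematically, your ``count'' argument for surjectivity does not quite make sense as stated: $W$ and $\wedge^3 V$ are infinite groups, and the identity $|W|/|\wedge^3 V|=|B_3|/|\wedge^3 V_{\dbF_2}|$ is only a heuristic; to make this rigorous you would need to pass to the exact sequence $0\to B_2\to W\to\wedge^3 V\to 0$ (the kernel of the projection $W\to\wedge^3 V$ is $\{0\}\oplus B_2$) and argue that the image of $(\tau,\sigma)$ already contains $\{0\}\oplus B_2$, which is again precisely Johnson's hard result. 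Since the paper treats the whole theorem as a black-box citation, none of this affects the ultimate correctness, but you should not present the surjectivity as following formally from the injectivity plus surjectivity of the individual maps $\tau$ and $\sigma$ — the fiber-product constraint is what makes this nontrivial, and it is supplied wholesale by [Jo4, Thm~3].

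Finally, your Part (I) is fine and in fact matches the style of the paper's later Proposition~\ref{prop:commutative}: both $\alpha\circ\tau$ and $\beta\circ\sigma$ are checked on all $1$-BP maps via \eqref{johnhom} and \eqref{bcjhom} (you do not need the $\Sp(V)$-equivariance reduction to ``a single'' $1$-BP map, since those formulas already cover every such map), and the computation $\beta(\overline a\,\overline b(\overline c+1))=a\wedge b\wedge c$ because $\overline a\,\overline b\in B_2=\Ker\beta$ is exactly right. But note this check is subsumed by the claimed equality $\Im(\tau,\sigma)=W$, so it is not logically required if one takes Johnson's theorem as given.
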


Now let $I\subseteq \dbg$ be any subset with $|I|\geq 3$, and let $S_I$ be the corresponding subsurface of $\Sigma$
introduced in \S~\ref{section:CP}. Since $S_I$ is itself a closed orientable surface of genus $\geq 3$ with one boundary component, 
we can repeat the entire construction described in this section starting with $S_I$ instead of $\Sigma$, so in particular we can define the modules $V(I)=H_1(S_I)$ and $B_3(I)$ and the homomorphisms 
$\tau_I: \calI_I\to \wedge^3 V(I)$ and $\sigma_I: \calI_I\to B_3(I)$ (recall that $\calI_I$ was defined as the subgroup of $\calI$
consisting of mapping classes supported on $S_I$, but this group is canonically isomorphic to the Torelli subgroup of $\Mod(S_I)$ -- see 
\cite[\S~2.1]{Pu2}). 

\begin{Proposition} 
\label{prop:commutative}
The following diagrams are commutative:
\begin{equation}
\label{dia:rigid2}
\xymatrix{
(\calI_I)^{ab}\ar[r]^{\tau_I}\ar[d]_{} &\wedge^3 V(I)\ar[d]^{}&\qquad  & (\calI_I)^{ab}\ar[r]^{\sigma_I}\ar[d]_{}&B_3(I)\ar[d]^{}\\
\calI^{ab}\ar[r]^{\tau} &\wedge^3 V&\qquad & \calI^{ab}\ar[r]^{\sigma}&B_3
 }
\end{equation}
where the vertical maps are induced by the natural inclusions $\calI_I\to \calI$ and $H_1(S_I)\to H_1(\Sigma)$.
\end{Proposition}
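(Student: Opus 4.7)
The plan is to reduce the verification of both diagrams to checking commutativity on a convenient generating set. Since $|I|\geq 3$, the subsurface $S_I$ has genus $|I|\geq 3$, and the group $\calI_I$ is canonically isomorphic to the Torelli subgroup of $\Mod(S_I)$. By Johnson's theorem applied to $S_I$, this Torelli group is generated by $1$-BP maps of $S_I$. Hence it is enough to verify, at the group level, that for an arbitrary $1$-BP map $f=T_\gamma T_\delta^{-1}$ in $\calI_I$ (with $(\gamma,\delta)$ a genus-$1$ bounding pair in $S_I$), the image of $\tau_I(f)$ in $\wedge^3 V$ equals $\tau(f)$, and likewise for the $\sigma$-diagram.

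First I would verify that every such $f$ is also a genus-$1$ $1$-BP map when viewed inside $\calI$. Because $S_I$ has a single boundary component (by the construction in \S~\ref{section:CP}) and $\partial\Sigma$ lies in $\Sigma\setminus S_I$, the ``Torelli component'' $\Sigma^{S_I}_{\gamma,\delta}$ of $S_I\setminus(\gamma\cup\delta)$ (the one disjoint from $\partial S_I$) is itself a component of $\Sigma\setminus(\gamma\cup\delta)$ that is disjoint from $\partial\Sigma$. Therefore $(\gamma,\delta)$ is a genus-$1$ bounding pair in $\Sigma$ and $\Sigma_{\gamma,\delta}=\Sigma^{S_I}_{\gamma,\delta}$, so formulas \eqref{johnhom} and \eqref{bcjhom} can be applied to $f$ in both $S_I$ and $\Sigma$ using the same Torelli subsurface.

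Next I would invoke those explicit formulas. Choose $a,b\in H_1(\Sigma^{S_I}_{\gamma,\delta})\subseteq V(I)$ with $a\cdot b=1$, and set $c=[\gamma]$. Since the intersection number of two curves is computed locally and the inclusion $V(I)\hookrightarrow V$ preserves the symplectic pairing, the same triple $(a,b,c)$ (read in $V(I)$ or its image in $V$) serves both calculations:
\[
\tau_I(f)=a\wedge b\wedge c\in\wedge^{3}V(I),\qquad \tau(f)=a\wedge b\wedge c\in\wedge^{3}V.
\]
This is precisely the commutativity of the first diagram. For the second diagram, the intersection-form-preserving map $V(I)_{\dbF_2}\hookrightarrow V_{\dbF_2}$ (note it is injective since $V(I)\to V$ is a split inclusion onto the handles in $I$) induces a ring homomorphism $B(I)\to B$ respecting relations (R1)--(R2) and carrying $B_3(I)$ to $B_3$; then \eqref{bcjhom} gives $\sigma_I(f)=\overline a\,\overline b(\overline c+1)$ and $\sigma(f)=\overline a\,\overline b(\overline c+1)$ as elements of $B_3(I)$ and $B_3$, respectively, which are identified under this map.

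The only point needing care is confirming that a $1$-BP map in $S_I$ really is a $1$-BP map in $\Sigma$ with the same Torelli subsurface, and this is where the hypothesis that $S_I$ has a single boundary component (ensured by the construction in \S~\ref{section:CP}) is essential. Everything else is a routine matter of unwinding definitions and invoking Johnson's generation theorem, so I do not anticipate a substantive obstacle beyond this combinatorial check.
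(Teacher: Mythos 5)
Your proof is correct and follows essentially the same route as the paper: reduce to $1$-BP maps via Johnson's generation theorem, observe that a genus-$1$ bounding pair in $S_I$ is also a genus-$1$ bounding pair in $\Sigma$ with the same Torelli subsurface (hence the same $a,b,c$), and then apply the explicit formulas \eqref{johnhom} and \eqref{bcjhom}. The extra care you take in checking that $V(I)\hookrightarrow V$ respects the intersection form and induces a map $B_3(I)\to B_3$ is a reasonable point to spell out but is implicit in the paper's argument.
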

\begin{proof} For both diagrams, it is enough to check commutativity for the values on 1-BP maps. This follows immediately from \eqref{johnhom} and \eqref{bcjhom} since if $(\gamma,\delta)$ is a genus $1$ bounding pair on $S_I$, it is also 
a genus $1$ bounding pair on $\Sigma$, and replacing $S_I$ by $\Sigma$ does not change the surface $\Sigma_{\gamma,\delta}$ or the homology classes $a,b$ and $c$ in  \eqref{johnhom} and \eqref{bcjhom}.
\end{proof}

\subsection{$\calI^{ab}$ as a regular $\SL_g(\dbZ)$-module}  
\label{sec:Torreg}
For each $1\leq i\leq g$ choose any basis $\{a_i,b_i\}$ for $H_1(S_i)\subset V$ s.t. $a_i\cdot b_i=1$.
Then  $V= \bigoplus\limits_{i=1}^g \left(\dbZ a_i\oplus \dbZ b_i\right)$, and $\{a_i,b_i\}_{i=1}^g$ is a symplectic basis for  
$V$, that is, $a_i\cdot a_j=b_i\cdot b_j=0$ for all $i,j$, and $a_i\cdot b_j=\delta_{ij}$.
Now define $\phi:\SL_g(\dbZ)\to \Sp(V)$ by 
\begin{equation}
\label{eq:slsp}
\phi(A)=\left(\begin{matrix} A& 0\\ 0&(A^{t})^{-1}\end{matrix}\right),
\end{equation}
where the above matrix is with respect to the ordered basis $(a_1,\ldots, a_g,b_1,\ldots, b_g)$.
Thus, all the $\Sp(V)$-modules defined in this section can also be considered as $\SL_g(\dbZ)$-modules. 

Note that the map
$(\tau,\sigma):\calI^{ab}\to W$ from Theorem~\ref{thm:fullab} is an isomorphism of $\SL_g(\dbZ)$-modules,
and the $\SL_g(\dbZ)$-module structure on $\calI^{ab}$ coincides with the one given by \eqref{eq:action} with $\phi$ as above 
(recall that we identified $\Sp(V)$ with $\calM/\calI$).

We could proceed working directly with $W$, but things can be simplified further thanks to the following observations.
Let $\pi:B_3\to  \oplus_{i=0}^3 \wedge^i V_{\dbF_2}$ be the unique linear map such that
$\pi(1)=1$, $\pi(\overline{x})=x $, $\pi(\overline{x}\,\overline{y})=x\wedge y$
and $\pi(\overline{x}\, \overline{y}\, \overline{z})=x\wedge y\wedge z$ where $x,y,z$ are distinct elements of the basis
$\{a_i, b_i\}_{i=1}^{g}$. Clearly, $\pi$ is bijective, and it is straightforward to check that
$\pi$ is an isomorphism of $\SL_g(\dbZ)$-modules 
\footnote{This is true since $\SL_g(\dbZ)$ preserves both $V_A=\oplus\dbZ a_i$ and $V_B=\oplus\dbZ b_i$ and the intersection form vanishes on both $V_A$ and $V_B$.}
(but not an isomorphism of $\Sp(V)$-modules!)

Now define $\pi': \wedge^3 V\oplus B_3\to \wedge^3 V\oplus\bigoplus\limits_{i=0}^3 \wedge^i V_{\dbF_2}$ by
$\pi'(u,v)=(u,\pi(v)-\alpha(u))$. Clearly, $\pi'$ is an isomorphism of $\SL_g(\dbZ)$-modules, and it is easy to show
that $\pi'(W)=\wedge^3 V\oplus\bigoplus\limits_{i=0}^2 \wedge^i V_{\dbF_2}$.
Thus, in view of Theorem~\ref{thm:fullab}, we have the following isomorphism of
$\SL_g(\dbZ)$-modules:
$$\lambda=\pi'\circ(\tau,\sigma): \calI^{ab}\to \wedge^3 V\oplus \left(\bigoplus_{i=0}^2 \wedge^i V_{\dbF_2}\right).$$
It is clear from \eqref{eq:slsp}  that $V$ is a direct sum of the natural $\SL_{g}(\dbZ)$-module $\dbZ^g$ and its dual.
Using Lemmas~\ref{reg_chev} and \ref{lem_regular} and the isomorphism $\lambda$, we endow 
$\calI^{ab}$ with the structure of a regular $\SL_g(\dbZ)$-module of degree $3$, so condition (ii)
in Theorem~\ref{coh_main_new} holds. 
Here is an explicit description of the obtained grading on $\calI^{ab}$.
Below $e_t$ stands for $a_t$ or $b_t$.
\begin{itemize}
\item[(1)] If $I=\{i<j<k\}$, then $(\calI^{ab})^I=\lambda^{-1}(\oplus \dbZ e_{i}\wedge e_{j}\wedge e_{k})$. 
\item[(2)] If $I=\{i<j\}$, then $(\calI^{ab})^I=\lambda^{-1}((\oplus \dbZ e_{i}\wedge a_j\wedge b_{j})\oplus
(\oplus \dbZ e_{j}\wedge a_i\wedge b_{i})\oplus(\oplus \dbF_{2} e_{i}\wedge e_{j}))$.
\item[(3)] If $I=\{i\}$, then $(\calI^{ab})^I=\lambda^{-1}(\dbF_2 a_i\oplus \dbF_2 b_{i})$.
\item[(4)] If $I=\emptyset$, then $(\calI^{ab})^I=\lambda^{-1}(\dbF_2)$.
\end{itemize}

It remains to check condition (iii), i.e., that the $\dbg$-group structure on $\calI$ defined in \S~\ref{section:CP} is compatible with the grading defined above. Let $I\subseteq \dbg$, with $|I|\geq 3$, and let $\iota_I: (\calI_I)^{ab}\to \calI^{ab}$
be the natural map. We need to show that 
\begin{equation}
\label{eq:inclusion}
\iota_I ((\calI_I)^{ab})\supseteq \sum\limits_{J\subseteq I}(\calI^{ab})^J
\end{equation}
or, equivalently, that $\lambda\circ \iota_I ((\calI_I)^{ab})$ contains $\sum\limits_{J\subseteq I}\lambda((\calI^{ab})^J)$.
We claim that the latter two sets are both equal to $Z_I:=\wedge^3 V(I)\oplus \left(\bigoplus\limits_{i=0}^2 \wedge^i V(I)_{\dbF_2}\right)$ (where we identify $V(I)=H_1(S_I)$ with its image in $V=H_1(\Sigma)$).
Indeed, $\sum\limits_{J\subseteq I}\lambda((\calI^{ab})^J)=Z_I$ directly from (1)-(3) above, while
the equality $\lambda\circ \iota_I ((\calI_I)^{ab})=Z_I$ follows from Proposition~\ref{prop:commutative}, as we now explain.

For simplicity of notation we identify $\wedge^3 V(I)$ and $B_3(I)$ with their canonical images in $\wedge^3 V$ and $B_3$.
With this identification, Proposition~\ref{prop:commutative} asserts that $(\tau_I,\sigma_I)=(\tau,\sigma)\circ \iota_I$ where each side 
of the equality is a map from $(\calI_I)^{ab}$ to $\wedge^3 V\oplus B_3$. Thus $$\lambda\circ \iota_I ((\calI_I)^{ab})=
\pi'\circ(\tau,\sigma)\circ \iota_I((\calI_I)^{ab})=\pi'\circ (\tau_I,\sigma_I)((\calI_I)^{ab}).$$ Since $|I|\geq 3$, we can apply
Theorem~\ref{thm:fullab} (with $\calI_I$ playing the role of $\calI$) to conclude that 
$(\tau_I,\sigma_I)((\calI_I)^{ab})=W_I$ where (using the notations of \S~\ref{sec:abfull}) 
$$W_I=\{(u,v)\in \wedge^3 V(I)\oplus B_3(I): \alpha(u)=\beta(v)\}=W\cap (\wedge^3 V(I)\oplus B_3(I)).$$ 
Thus,  $\lambda\circ \iota_I ((\calI_I)^{ab})=\pi'(W_I)$, and a straightforward computation shows that $\pi'(W_I)=Z_I$.

\begin{Remark}\rm The hypothesis $|I|\geq 3$ was needed twice in the proof of the inclusion \eqref{eq:inclusion} -- first to apply Proposition~\ref{prop:commutative} and then to use Theorem~\ref{thm:fullab}. We do not know if Proposition~\ref{prop:commutative} remains valid for all $I$, 
but the equality $(\tau_I,\sigma_I)((\calI_I)^{ab})=W_I$ and the inclusion \eqref{eq:inclusion} are definitely false at least when $|I|=0$
(that is, $I=\emptyset$) since $\calI_{\emptyset}$ is the trivial subgroup while $(\calI^{ab})^{\emptyset}=\lam^{-1}(\dbF_2)$ and $W_{\emptyset}=(0,\dbF_2)$. Thus it is essential that condition (iii) in Theorem~\ref{coh_main_new} is only required to hold when $|I|\geq d$ rather than for all $I$.
\end{Remark}

\section{The abelianization of the kernel of a homomorphism to a nilpotent group}
\label{sec:abel}

In this short section we prove Theorem~\ref{kernel_main} using the following results of Roseblade and Robinson. A major part of the argument below (including the use of Theorem~\ref{thm:Robinson}) was suggested to us by Andrei Jaikin who substantially simplified our original proof. Our original argument was inspired by the proof of \cite[Thm~3.6]{PS1}.

\begin{Theorem}[Roseblade] 
\label{thm:Roseblade}
Let $Q$ be a virtually polycyclic group. Then 
\begin{itemize} 
\item[(a)] Every simple $\dbZ[Q]$-module is finite and thus is an $\dbF_p[Q]$-module
for some prime $p$.
\item[(b)] Assume now that $Q$ is nilpotent, let $\Omega$ be the augmentation ideal of $\dbZ[Q]$
and $V$ a finitely generated $\dbZ[Q]$-module. If $\Omega M=0$ for every simple quotient $M$ of $V$,
then $\Omega^N V=0$ for some $N\in\dbN$.
\end{itemize}
\end{Theorem}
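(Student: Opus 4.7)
These are classical theorems; my plan is to outline the standard strategies rather than reprove them in full, since part (a) in particular is genuinely deep.

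For (a), I would induct on the Hirsch length $h(Q)$, reducing first to the polycyclic case by passing to a finite index normal subgroup $Q_0 \triangleleft Q$ (a simple $\dbZ[Q]$-module becomes semisimple of finite length over $\dbZ[Q_0]$ by Clifford theory, so finiteness transfers). The base case $h(Q)=0$ is the elementary fact that simple $\dbZ$-modules are the $\dbF_p$. For the inductive step, I would choose $N \triangleleft Q$ with $Q/N$ infinite cyclic, and for a simple $\dbZ[Q]$-module $M$ pick a simple $\dbZ[N]$-submodule $M_0 \subseteq M|_N$, which is finite by induction. The task is then to promote finiteness of $M_0$ to finiteness of $M$. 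This is the technical heart of Roseblade's argument: using that $\dbZ[Q]$ is Noetherian (Hall's theorem), one analyzes the prime ideal spectrum of the skew Laurent extension $\mathrm{End}_{\dbZ[N]}(M_0^{\oplus k})[t,t^{-1};\sigma]$ and rules out characteristic-zero simple modules there. This prime-ideal analysis in twisted group rings is the main obstacle; the remainder of the argument is formal.

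For (b), my plan is to derive it from (a) by a Nakayama-style argument, crucially exploiting the nilpotence of $Q$. Set $\bar R = \dbZ[Q]/\mathrm{ann}_{\dbZ[Q]}(V)$, which is Noetherian. The hypothesis says $\Omega$ annihilates every simple quotient of $V$; combined with (a) (every simple $\bar R$-module is finite, hence has $\Omega$ acting trivially iff it is trivial as a $Q$-module), this places the image $\bar\Omega$ of $\Omega$ in the Jacobson radical $J(\bar R)$. It therefore suffices to show $J(\bar R)$ is nilpotent, for then $\Omega^N V \subseteq \mathrm{ann}(V)\cdot V = 0$. Since $Q$ is finitely generated nilpotent, $\Omega$ admits a \emph{polycentral} series: a finite chain of ideals whose successive quotients are generated by central elements (modulo the next term), built directly from the lower central series of $Q$. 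The standard polycentral nilpotency lemma, combined with Noetherianity and the finiteness of simple $\bar R$-modules from (a), then forces $J(\bar R)$ to be nilpotent.

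The hardest step is (a), which I would black-box as Roseblade's theorem; given (a), part (b) is a formal consequence of polycentral nilpotency, and this is where the hypothesis that $Q$ is nilpotent (rather than merely polycyclic) is decisive, since Jacobson radicals of group rings of general polycyclic groups need not be nilpotent.
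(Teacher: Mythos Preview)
The paper does not prove this theorem at all: immediately after the statement it says ``Parts (a) and (b) of Theorem~\ref{thm:Roseblade} are special cases of \cite[Cor A]{Ro} and \cite[Thm B]{Ro}, respectively'' and moves on. So there is no argument in the paper to compare your outline against; the result is used purely as a black box in the proof of Theorem~\ref{thm:16ext}.

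Your sketch is a reasonable summary of the ideas behind Roseblade's proofs. One point in your outline of (b) is not quite right as stated: you assert that the hypothesis places $\bar\Omega$ inside $J(\bar R)$, but $J(\bar R)$ is the intersection of the annihilators of \emph{all} simple $\bar R$-modules, whereas the hypothesis only controls those simple modules that occur as quotients of $V$, and a priori there could be others. Roseblade's actual route to (b) bypasses this by using the Artin--Rees property of polycentral ideals (which $\Omega$ has exactly when $Q$ is nilpotent): from AR one gets that $W=\bigcap_n \Omega^n V$ satisfies $\Omega W=W$, and then one argues that $W=0$. This is close in spirit to what you wrote and uses the same polycentral structure you identified, but it works directly with $V$ rather than with the Jacobson radical of $\bar R$, which avoids the issue above.
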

Parts (a) and (b) of Theorem~\ref{thm:Roseblade} are special cases of \cite[Cor A]{Ro} and \cite[Thm B]{Ro}, respectively.

\begin{Theorem}[Robinson \cite{Rb}] 
\label{thm:Robinson}
Let $Q$ be a nilpotent group, and let $M$ be a $Q$-module. Assume that either $H_0(Q,M)=0$ and $M$ is Noetherian or
$H^0(Q,M)=0$ and $M$ is Artinian. Then $H^i(Q,M)=0$ and $H_i(Q,M)=0$ for all $i>0$.
\end{Theorem}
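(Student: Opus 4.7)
The plan is to prove Robinson's theorem by induction on a complexity measure of $Q$, using the Lyndon--Hochschild--Serre spectral sequence of a central cyclic subgroup. I address the Noetherian/homology case in detail; the Artinian/cohomology case is entirely dual, exchanging kernels with cokernels and reversing arrows throughout. A preliminary reduction replaces an arbitrary nilpotent $Q$ by a finitely generated (hence polycyclic) one via direct-limit continuity of group homology in the group argument; then I induct on the length of a subnormal series of $Q$ with cyclic quotients.

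\emph{Base case: $Q$ cyclic.} For $Q = \dbZ$ with generator $t$, the length-two free resolution
\begin{equation*}
0 \to \dbZ[Q] \xrightarrow{\,t-1\,} \dbZ[Q] \to \dbZ \to 0
\end{equation*}
automatically forces $H_i(Q,M) = 0$ for $i \geq 2$ and reduces the problem to showing that surjectivity of $(t-1):M \to M$ implies injectivity. Surjectivity iterates to $M = (t-1)^k M$ for every $k$; the Noetherian hypothesis stabilizes the ascending chain $\{\ker((t-1)^k)\}_k$; a short diagram chase then forces any $m \in \ker(t-1)$ to be zero. The finite cyclic case $Q = \dbZ/n$ is handled analogously, combining the $2$-periodicity of cyclic (co)homology with the Noetherian hypothesis to pass between surjectivity of the norm map and injectivity of $(t-1)$.

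\emph{Inductive step.} For $Q$ nilpotent and non-cyclic, I choose a central cyclic subgroup $Z \leq Z(Q)$ (infinite cyclic whenever $Z(Q)$ has an element of infinite order, of prime order otherwise), so that $Q/Z$ has strictly smaller complexity. The Lyndon--Hochschild--Serre spectral sequence
\begin{equation*}
E^2_{p,q} = H_p(Q/Z,\, H_q(Z, M)) \;\Longrightarrow\; H_{p+q}(Q, M),
\end{equation*}
together with the cyclic base case applied to $Z$, leaves only the rows $q = 0$ and $q = 1$ (for infinite cyclic $Z$ this is immediate from the length-two resolution; for finite cyclic $Z$ Noetherianness plus periodicity truncates higher rows). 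This produces the Wang-type long exact sequence
\begin{equation*}
\cdots \to H_p(Q/Z, M^Z) \to H_p(Q, M) \to H_p(Q/Z, M_Z) \xrightarrow{d_2} H_{p-2}(Q/Z, M^Z) \to \cdots,
\end{equation*}
where $M_Z = H_0(Z,M)$ is a Noetherian $Q/Z$-module as a quotient of $M$, and $M^Z$ is a Noetherian $Q/Z$-module as a submodule. Since $H_0(Q/Z, M_Z) = H_0(Q, M) = 0$ by hypothesis, the inductive hypothesis applied to $Q/Z$ acting on $M_Z$ kills $H_p(Q/Z, M_Z)$ for every $p \geq 1$.

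\emph{The main obstacle is showing $H_0(Q/Z, M^Z) = 0$}, since the hypothesis $H_0(Q,M) = 0$ does not visibly provide it. My plan invokes the strong (determinant-trick) form of Nakayama's lemma for the polycyclic group ring $\dbZ[Q]$: from $\Omega(Q)\cdot M = M$ and $M$ finitely generated, it produces $x \in 1 + \Omega(Q)$ with $xM = 0$. Because $\Omega(Z) \subseteq \Omega(Q)$ annihilates $M^Z$, the $\dbZ[Q]$-action on $M^Z$ factors through $\dbZ[Q/Z]$, and $x$ acts on $M^Z$ as its image $1 + \bar y \in 1 + \Omega(Q/Z)$; the relation $(1+\bar y) M^Z = 0$ rearranges to $\Omega(Q/Z)\cdot M^Z = M^Z$, which is precisely $H_0(Q/Z, M^Z) = 0$. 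The inductive hypothesis then kills $H_p(Q/Z, M^Z)$ for $p \geq 1$, the Wang-type sequence collapses, and $H_i(Q, M) = 0$ for all $i \geq 1$ follows. The delicate point --- and the reason this is the main obstacle --- is justifying the determinant-trick Nakayama in the noncommutative ring $\dbZ[Q]$: for polycyclic group rings this rests on the Noetherianness of $\dbZ[Q]$ established by P.~Hall together with the polycentrality of the augmentation ideal (in the sense of Roseblade, Theorem~\ref{thm:Roseblade}), but the noncommutative Nakayama statement requires genuine care, and is the principal technical hurdle.
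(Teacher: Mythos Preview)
The paper does not prove this theorem: it is quoted from Robinson~\cite{Rb} and used as a black box in the proof of Theorem~\ref{thm:16ext}. So there is no ``paper's own proof'' to compare against; what follows is an assessment of your proposal on its own merits.

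Your overall architecture --- reduce to finitely generated $Q$, take a central cyclic $Z\leq Q$, run the Lyndon--Hochschild--Serre spectral sequence, and kill the $q=0$ row via the inductive hypothesis applied to $M_Z$ --- is exactly the standard route and is correct up to the point you flag. After that row vanishes one indeed obtains $H_n(Q,M)\cong H_{n-1}(Q/Z,M^Z)$, so the entire theorem collapses to the single assertion $(M^Z)_{Q/Z}=0$. You are right that this is the crux.

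The gap is in your proposed resolution of that crux. The determinant-trick form of Nakayama --- producing a single element $x\in 1+\Omega(Q)$ with $xM=0$ --- genuinely requires commutativity of the ring (Cayley--Hamilton for the matrix of a generating set), and there is no noncommutative substitute that yields a \emph{central} annihilator of the required shape. Neither Hall's Noetherianness of $\dbZ[Q]$ nor Roseblade's polycentrality of $\Omega(Q)$ supplies such an element; polycentrality gives a centralizing sequence generating $\Omega(Q)$, which is useful for Artin--Rees--type arguments but does not produce a single $x\in 1+\Omega(Q)$ killing $M$. So as written the step does not go through, and your honest caveat (``requires genuine care'') understates the difficulty: a different idea is needed, not merely care.

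One way to close the gap, in the spirit of Robinson's paper, is to prove directly that $M^Z=0$ (not merely that its $Q/Z$-coinvariants vanish). For the infinite-cyclic case $Z=\langle z\rangle$ this amounts to showing that $z-1$ acts injectively on $M$. Argue as follows: the ascending chain $\ker(z-1)\subseteq\ker(z-1)^2\subseteq\cdots$ stabilizes by Noetherianness at some $K=\ker(z-1)^k$; since $z$ is central, $K$ is a $Q$-submodule on which $z-1$ is nilpotent, hence $K$ is a module over $\dbZ[Q]/(z-1)^k$. Filtering by powers of $(z-1)$ reduces to showing that a nonzero Noetherian $\dbZ[Q/Z]$-module $N$ (a subquotient of $M$, with $Q$-coinvariants still zero by a short-exact-sequence chase) has $N_{Q/Z}\neq 0$ --- and \emph{that} follows from the inductive hypothesis in its contrapositive form, since $Q/Z$ has smaller Hirsch length. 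This avoids any appeal to a noncommutative Nakayama. Alternatively, in the Artinian/cohomology case the dual argument (showing $M_Z=0$ via the descending chain of images of $z-1$) is the cleaner of the two, and one can then deduce the Noetherian/homology case by a duality or by running the same template.
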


We are now ready to prove Theorem~\ref{kernel_main}; in fact, we will prove a slightly more general statement:
\begin{Theorem}
\label{thm:16ext} 
Let $G$ be a finitely generated group, let $K$ be a normal subgroup of $G$ such that $Q=G/K$ is nilpotent,
and let $\Omega$ be the augmentation ideal of $\dbZ[Q]$.
 The following hold:
\begin{itemize}
\item[(a)] $K^{ab}=K/[K,K]$ is finitely generated as a $\dbZ[Q]$-module (equivalently, as a $\dbZ[G]$-module)
\item[(b)] $G/[K,K]$ is nilpotent if and only if $K/[K,K]$ is annihilated by some power of $\Omega$.
\item[(c)] Assume that for any finite field $F$ of prime order and any non-trivial finite-dimensional irreducible representation $\rho$
of $G$ over $F$ with $\Ker\rho\supseteq K$ we have $H^1(G,\rho)=0$. Then the equivalent conditions from (b) hold. 
\end{itemize}
\end{Theorem}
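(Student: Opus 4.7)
For part (a), I would invoke the standard fact that in any extension $1\to K\to G\to Q\to 1$ with $G$ finitely generated and $Q$ finitely presented, $K^{ab}$ is finitely generated as a $\dbZ[Q]$-module; the hypothesis applies since every finitely generated nilpotent group is polycyclic, hence finitely presented. For part (b), the direct computation $(g-1)\cdot\bar k=\overline{gkg^{-1}k^{-1}}$ in $K^{ab}$ (written additively, with $\bar k$ the image of $k\in K$) shows by iteration that $\Omega^n K^{ab}=0$ is equivalent to $[K,\underbrace{G,\ldots,G}_n]\subseteq[K,K]$. Letting $c$ denote the nilpotency class of $Q$, so that $\gamma_{c+1}G\subseteq K$, the inclusion $\gamma_{c+n+1}G\subseteq[K,G,\ldots,G]$ ($n$ copies of $G$) shows that the annihilation condition for some $n$ is in turn equivalent to $\gamma_m G\subseteq[K,K]$ for some $m$, i.e.\ to $G/[K,K]$ being nilpotent.

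For part (c), I would combine Roseblade's results with Robinson's vanishing and the Hochschild--Serre five-term sequence. By (a) and (b), it suffices to prove $\Omega^N K^{ab}=0$ for some $N$, and by Theorem~\ref{thm:Roseblade}(b) this reduces to showing $\Omega M=0$ for every simple quotient $M$ of $K^{ab}$. By Theorem~\ref{thm:Roseblade}(a) any such $M$ is finite, hence annihilated by some prime $p$, and so $M$ is a simple $\dbF_p[Q]$-module; viewing it as an $\dbF_p[G]$-module via $G\to Q$ makes it a simple $\dbF_p[G]$-module with $K\subseteq\Ker\rho$. If $M$ is trivial as a $Q$-module we are done, so assume it is not.

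Since $\Omega M$ is a non-zero submodule of the simple module $M$, we have $\Omega M=M$, so $H_0(Q,M)=0$. As $M$ is finite, hence Noetherian, Robinson's Theorem~\ref{thm:Robinson} yields $H^i(Q,M)=0$ for all $i\geq 1$. Meanwhile, the corresponding representation $\rho:G\to\Aut_{\dbF_p}(M)$ is a non-trivial finite-dimensional irreducible $\dbF_p$-representation of $G$ with $K\subseteq\Ker\rho$, so by hypothesis $H^1(G,M)=0$. Since $M^K=M$, the five-term exact sequence for $1\to K\to G\to Q\to 1$ reads
\begin{equation*}
0\to H^1(Q,M)\to H^1(G,M)\to H^1(K,M)^Q\to H^2(Q,M),
\end{equation*}
and three of these four terms now vanish, forcing $H^1(K,M)^Q=0$. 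But $H^1(K,M)=\Hom(K^{ab},M)$, and the tautological surjection $K^{ab}\twoheadrightarrow M$ is a non-zero $Q$-equivariant element of this group, a contradiction.

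The main obstacle is conceptual rather than technical: one must recognize how Roseblade's reduction to simple quotients, Robinson's cohomological vanishing for nilpotent $Q$, and the Hochschild--Serre five-term sequence dovetail so that the tautological projection $K^{ab}\to M$ supplies the ``witness'' class in $H^1(K,M)^Q$ that is ruled out by the hypothesis $H^1(G,M)=0$. Each ingredient is indispensable: without Roseblade one cannot restrict attention to simple finite modules over prime fields (where the hypothesis lives), and without Robinson's vanishing the $H^2(Q,M)$ term in the five-term sequence is uncontrolled and the contradiction evaporates.
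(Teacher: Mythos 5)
Your proof is correct, and parts (b) and (c) follow the paper's argument essentially verbatim: the identity $(g^{-1}-1)\bar c=\overline{[c,g]}$ for (b), and for (c) the combination of Roseblade's reduction to finite simple $\dbF_p[Q]$-modules, Robinson's vanishing of $H^1(Q,M)$ and $H^2(Q,M)$, the inflation--restriction sequence, and the witness class in $\Hom_Q(K^{ab},M)$ coming from the quotient map. The one genuine difference is in part (a): you cite the standard relation-module fact that if $G$ is finitely generated and $Q=G/K$ is finitely presented then $K^{ab}$ is a finitely generated $\dbZ[Q]$-module (using that a finitely generated nilpotent group is polycyclic, hence finitely presented). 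The paper instead argues directly: $K/\gamma_n G$ is finitely generated because it is a subgroup of the finitely generated nilpotent group $G/\gamma_n G$, and $(\gamma_n G)^{ab}$ is generated as a $\dbZ[G]$-module by the images of the left-normed commutators of length exactly $n$ in a finite generating set, via the same commutator identity $\overline{[c,x]}=(x^{-1}-1)\bar c$. Your route is more conceptual and avoids the explicit commutator calculus; the paper's route is self-contained and reuses the formula \eqref{eq:augaction} already needed for (b). Both are correct.
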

\begin{Remark} (a) and (b) are well known, but we are not aware of a reference in the literature.
\end{Remark} 
\begin{proof} We start with an observation that will be used for both (a) and (b). 
Given $c\in K$, let $\overline c=c[K,K]$ denote its image in $K^{ab}$. Then 
\begin{equation}
\label{eq:augaction}
(g^{-1}-1)\overline c=\overline{[c,g]}\mbox{ for all } c\in K \mbox{ and }g\in G
\end{equation}
(recall that $[c,g]=c^{-1}g^{-1}cg$).

(a) We know that $K$ contains $\gamma_n G$ for some $n$. 
Since $G$ is finitely generated, $K/\gamma_n G$ is also finitely generated (being a subgroup of a finitely generated nilpotent group).
Hence to prove that $K^{ab}$ is a finitely generated $\dbZ[G]$-module, it suffices to do the same for $(\gamma_n G)^{ab}$.
 
Let $X$ be a finite generating set for $G$. Then $\gamma_n G$
is generated as a group by left-normed commutators in $X$ of length at least $n$. But if $c$ is such a commutator and $x\in X$,
then $\overline{[c,x]}=(x^{-1}-1)\overline{c}$ by \eqref{eq:augaction}. Hence by straightforward induction $(\gamma_n G)^{ab}$ is generated 
as a $\dbZ[G]$-module by the images of left-normed commutators in $X$ of length exactly $n$.

\vskip .1cm
(b) Given $m\in\dbN$, we set $[K,{}_m G]=[K,\underbrace{G,\ldots, G}_{m \mbox{ times }}]$. First, we claim that $\Omega^m K^{ab}=\{0\}$ if and only if $[K,{}_m G]\subseteq [K,K]$.
The forward direction is immediate by \eqref{eq:augaction} while the opposite direction follows from \eqref{eq:augaction} and the fact that
$\Omega^m$ is generated as a left ideal of $\dbZ[Q]$ by elements of the form $(g_1-1)\ldots (g_m-1)$ with $g_i\in Q$.

Thus, (b) reduces to showing that $[K,{}_m G]\subseteq [K,K]$ for some $m$ if and only if
$G/[K,K]$ is nilpotent. If $G/[K,K]$ is nilpotent and $m$ is its nilpotency class, then $\gamma_{m+1}G\subseteq [K,K]$,
so in particular $[K,{}_m G]\subseteq [K,K]$. 

Conversely, suppose that $[K,{}_m G]\subseteq [K,K]$. Since $K\supseteq \gamma_{n}G$ for some $n$, we have $[K,K]\supseteq [\gamma_n G,\underbrace{G,\ldots, G}_{m \mbox{ times }}]=\gamma_{n+m}G$ and so
$G/[K,K]$ is nilpotent. 
\vskip .1cm

(c) We will prove that $\Omega^m K^{ab}=\{0\}$ for some $m\in\dbN$ by contradiction.
Let $V=K^{ab}$, and assume that $\Omega^m K^{ab}\neq \{0\}$ for any $m\in\dbN$.
By Theorem~\ref{thm:Roseblade}(b)
there is a simple $Q$-module $M$ which is a quotient of $V$ such that $\Omega M\neq 0$, so $Q$ acts on $M$ non-trivially.
Moreover, by Theorem~\ref{thm:Roseblade}(a), $M$ is a finite $F[Q]$-module for some finite field $F$ of prime order,
so $H^1(G,M)=0$ by the hypotheses of Theorem~\ref{kernel_main}.

Since $K$ acts trivially on $V$ and hence on $M$, we have a natural isomorphism
$$H^1(K,M)\cong \Hom(K,M)\cong \Hom(V,M).$$
Under  this isomorphism $H^1(K,M)^Q$, the subspace of $Q$-invariant elements of $H^1(K,M)$,
maps to $\Hom_Q(V,M)$, the subspace of $Q$-module homomorphisms from $V$ to $M$.
Since $M$ is a quotient of $V$ as a $Q$-module, we have $\Hom_Q(V,M)\neq 0$ and hence $H^1(K,M)^Q\neq 0$.

On the other hand, we have the inflation-restriction sequence
$$0\to H^1(Q,M)\to H^1(G,M)\to H^1(K,M)^Q\to H^2(Q,M)\to H^2(G,M).$$ 
Since $M$ is finite, simple and non-trivial, it is both Artinian and Noetherian, and both $H^0(Q,M)$ and  $H_0(Q,M)$ are trivial.
Thus, using either condition in Theorem~\ref{thm:Robinson}, we get
$H^1(Q,M)=H^2(Q,M)=0$, so $H^1(G,M)\cong H^1(K,M)^Q\neq 0$, a contradiction.
\end{proof}

\section{Abelianization of finite index subgroups}
\label{sec:fab}

\subsection{Proof of Theorem~\ref{FAb_concrete}} 
\label{sec:fabproof}
In this subsection we will prove Theorem~\ref{FAb_concrete} whose statement is recalled below.

\begin{Theorem1.9}
Let $G$ and $N$ be as in Theorem~\ref{coh_main}, 
and let $\Gamma=\Aut(F_n)$ if $G=\IA_n$
and $\Gamma=\Mod_g^1$ if $G=\calI_g^1$. Let $H$ be a subgroup of $\Gamma$ containing $\gamma_N G$.
The following hold:
\begin{itemize} 
\item [(1)] If $H$ is a finite index subgroup of $G$, then $\dim H^1(H,\dbC)=\dim H^1(G,\dbC)$.
\item [(2)] If $H$ is a finite index subgroup of $\Gamma$, then $H$ has finite abelianization.
\end{itemize}
\end{Theorem1.9}

We will deduce Theorem~\ref{FAb_concrete} from Theorem~\ref{coh_main} and the following two standard results.

\begin{Lemma} 
\label{lem:Shapiro}
Let $G$ be a group, $H$ a normal subgroup of $G$ of finite index and $Q=G/H$. Then
\begin{equation}
\label{eq:cohres}
H^1(H,\mathbb{C}) \cong H^1(G, \mathbb{C})\oplus \bigoplus\limits_{V \in Irr(Q)\setminus{V_0}} H^1(G, V)^{\dim(V)}
\end{equation}
where $Irr(Q)$ is the set of equivalence classes of irreducible complex representations of $Q$ and
$V_0$ is the trivial representation of $Q$. 
\end{Lemma}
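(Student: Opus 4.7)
The proof will be a direct application of Shapiro's lemma combined with the decomposition of the regular representation of the finite quotient $Q$; the only work is to identify the induced module correctly and then invoke standard character theory.

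First, since $H$ has finite index in $G$, Shapiro's lemma gives a natural isomorphism
$$H^1(H,\dbC)\cong H^1(G,\Ind_H^G \dbC),$$
where $\dbC$ on the right side denotes the trivial $H$-module. Because $H$ is normal in $G$ and acts trivially on $\dbC$, the induced module $\Ind_H^G \dbC = \dbC[G]\otimes_{\dbC[H]}\dbC$ is naturally isomorphic to the group algebra $\dbC[Q]$ as a left $G$-module, with $G$ acting through the quotient map $G\to Q$ by left multiplication. Under this identification, the class $g\otimes 1$ corresponds to the coset $gH\in Q$, and $G$-equivariance is immediate.

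Second, since $Q$ is a finite group, Maschke's theorem together with the standard computation of the regular representation yields a decomposition
$$\dbC[Q]\cong \bigoplus_{V\in \mathrm{Irr}(Q)} V^{\dim V}$$
as $Q$-modules, hence (by inflation along $G\to Q$) as $G$-modules. Since group cohomology is additive in the coefficients for finite direct sums, applying $H^1(G,-)$ gives
$$H^1(G,\dbC[Q])\cong \bigoplus_{V\in \mathrm{Irr}(Q)} H^1(G,V)^{\dim V}.$$

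Combining the two displayed isomorphisms and separating off the summand corresponding to the trivial representation $V_0$ (which contributes $H^1(G,\dbC)^{\dim V_0}=H^1(G,\dbC)$) yields the claimed formula \eqref{eq:cohres}. There is no genuine obstacle; the only point deserving care is the verification that $\Ind_H^G\dbC \cong \dbC[Q]$ as $G$-modules via the action stated above, which is immediate from the definition of induction for a normal finite-index subgroup.
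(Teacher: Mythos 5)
Your proof is correct and takes essentially the same approach as the paper: apply Shapiro's lemma, identify the (co)induced module with $\dbC[Q]$, and decompose the regular representation into irreducibles. The only cosmetic difference is that you phrase Shapiro's lemma via $\Ind_H^G$ where the paper uses $\mathrm{Coind}_H^G$; this is harmless since the two functors coincide for finite-index subgroups, which is the setting at hand.
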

\begin{proof}
By Shapiro's Lemma $H^1(H,\mathbb{C}) \cong H^1(G, {\rm Coind}_H^{G}(\mathbb{C}))$
where ${\rm Coind}_H^{G}(\mathbb{C})$ is the coinduced module.
Since $H$ is a normal finite index subgroup of $G$, we have an isomorphism of $G$-modules
${\rm Coind}_H^{G}(\mathbb{C})\cong \mathbb{C} [Q] \cong \bigoplus_{V \in Irr(Q)} V^{\dim(V)},$
which clearly implies Lemma~\ref{lem:Shapiro}. 
\end{proof}

\begin{Definition}\rm A representation $(\rho,V)$ of a group $G$ will be called \emph{finite-image} if $\rho(G)$ is finite.
\end{Definition}

\begin{Lemma} 
\label{lem:LubZhuk}
Let $G$ be a finitely generated group and $K$ a normal subgroup of $G$. The following are equivalent:
\begin{itemize}
\item[(i)] $H^{ab}$ is finite for any finite index subgroup $H$ of $G$ which contains $K$.
\item[(ii)] $H^1(G,V)=0$ for any irreducible finite-image complex representation $V$ of $G$ on which $K$ acts trivially.
\end{itemize}
\end{Lemma}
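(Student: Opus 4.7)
The plan is to prove both implications by first reducing to the case of a normal finite-index subgroup (via the normal core) and then applying the decomposition in Lemma~\ref{lem:Shapiro} to translate between finiteness of $H^{ab}$ and vanishing of $H^1(G,V)$ for irreducible finite-image $V$. Throughout, I would use the standard fact that any finite-index subgroup of a finitely generated group is finitely generated, so that for such a subgroup $H$, finiteness of $H^{ab}$ is equivalent to $H^1(H,\dbC)=\Hom(H^{ab},\dbC)=0$.

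For the direction (ii) $\Rightarrow$ (i), given a finite-index subgroup $H$ of $G$ containing $K$, I would first pass to the normal core $H_0=\bigcap_{g\in G} gHg^{-1}$. Since $K$ is normal in $G$, it is contained in every conjugate $gHg^{-1}$, hence in $H_0$, and $H_0$ is normal of finite index in $G$. It is enough to show $H_0^{ab}$ is finite: then $H/[H_0,H_0]$ is an extension of the finite group $H/H_0$ by the finite group $H_0^{ab}$, and since $[H_0,H_0]\subseteq [H,H]$, $H^{ab}$ is a quotient of $H/[H_0,H_0]$ and is therefore finite. Now apply Lemma~\ref{lem:Shapiro} with $Q=G/H_0$: every non-trivial $V\in\mathrm{Irr}(Q)$ inflates to an irreducible finite-image complex representation of $G$ on which $K$ acts trivially (since $K\subseteq H_0$), so $H^1(G,V)=0$ by (ii); and $H^1(G,\dbC)=0$ by (ii) applied to the trivial representation. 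The decomposition then forces $H^1(H_0,\dbC)=0$, which yields $H_0^{ab}$ finite as noted above.

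For the direction (i) $\Rightarrow$ (ii), let $(\rho,V)$ be an irreducible finite-image complex representation of $G$ on which $K$ acts trivially. If $V$ is the trivial representation, apply (i) to $H=G$ to get $G^{ab}$ finite and hence $H^1(G,\dbC)=0$. Otherwise set $H=\Ker\rho$; this is a normal finite-index subgroup containing $K$, so by (i) $H^{ab}$ is finite and therefore $H^1(H,\dbC)=0$. Applying Lemma~\ref{lem:Shapiro} with $Q=G/H$, the representation $V$ appears (up to isomorphism) among the summands on the right-hand side, because $V$ factors through $Q$ and is irreducible as a representation of $Q$. Since the entire right-hand side vanishes, in particular $H^1(G,V)=0$.

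The proof is largely a bookkeeping exercise around Lemma~\ref{lem:Shapiro}; the only real subtlety is in the reduction to the normal case for the direction (ii) $\Rightarrow$ (i), where I would emphasize the crucial use of the hypothesis that $K$ is normal in $G$ to ensure that the normal core $H_0$ still contains $K$, and therefore that the inflated representations encountered in the decomposition of $H^1(H_0,\dbC)$ are still trivial on $K$, so that (ii) applies to them.
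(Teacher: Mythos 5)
Your proof is correct and follows essentially the same strategy as the paper's: reduce to a normal finite-index subgroup, then decode the decomposition in Lemma~\ref{lem:Shapiro}. The one place where you genuinely diverge is in passing from the normal subgroup back up to $H$ in the direction (ii)~$\Rightarrow$~(i). The paper first shows $H^1(H',\dbC)=0$ for a finite-index subgroup $H'\leq H$ that is normal in $G$ and contains $K$, and then argues that the restriction map $H^1(H,\dbC)\to H^1(H',\dbC)$ is injective (because $\dbC$ is torsion-free and $H/H'$ is finite), so $H^1(H,\dbC)=0$. You instead work purely group-theoretically: you show $H_0^{ab}$ is finite and then conclude $H^{ab}$ is finite by observing that $H^{ab}$ is a quotient of the finite group $H/[H_0,H_0]$, which fits into an extension of $H/H_0$ by $H_0^{ab}$. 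Your route avoids any appeal to functoriality of cohomology and is, if anything, slightly more self-contained; the paper's route is a touch shorter once one invokes the standard injectivity of restriction. Both are standard and correct. You also explicitly isolate the trivial-$V$ case in (i)~$\Rightarrow$~(ii), which the paper treats implicitly (when $V$ is trivial, $H=\Ker\rho=G$ and the same Shapiro decomposition gives $H^1(G,\dbC)=0$); this is a harmless and arguably clearer restructuring.
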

\begin{proof} First note that since $G$ is finitely generated, a finite index subgroup $H$ of $G$ is also finitely generated. Hence such $H$ has finite abelianization if and only if $H^1(H,\mathbb{C})=0$.

``(ii)$\Rightarrow$(i)'' Assume that (ii) holds, and let $H$ be a finite index subgroup of $G$ containing $K$. If $H$ is normal, then $H^1(H,\mathbb{C})=0$ directly by Lemma~\ref{lem:Shapiro}. In general, we can find a finite index subgroup $H'$ of $H$ which is normal in $G$
and still contains $K$. Then $H^1(H',\dbC)=0$. Since $H'$ has finite index in $H$, the restriction map
$H^1(H,\dbC)\to H^1(H',\dbC)$ is injective and therefore $H^1(H,\mathbb{C})=0$ as well.

``(i)$\Rightarrow$(ii)'' Now assume that (i) holds, and let $(\rho,V)$ be a representation satisfying the hypotheses of (ii). Then $H=\Ker\rho$ is a finite index normal subgroup of $G$ containing $K$, so $H^1(H,\dbC)=0$ by (i). Since $V$ is also an irreducible representation of $G/H$, Lemma~\ref{lem:Shapiro} implies that $H^1(G,V)=0$ as desired.  
\end{proof}

\begin{proof}[Proof of Theorem~\ref{FAb_concrete}] Part (1) follows directly from Theorem~\ref{coh_main} and Lemma~\ref{lem:Shapiro}.

Let us now prove (2). By Lemma~\ref{lem:LubZhuk} we need to show that $H^1(\Gamma,V)=0$ for any irreducible finite-image complex representation $V$ of $\Gamma$ on which $\gamma_N G$ acts trivially. The exact sequence of groups $1\to G\to \Gamma\to \Gamma/G\to 1$ yields the following inflation-restriction sequence:
\begin{equation}
\label{eq:infres}
0 \rightarrow H^1(  \Gamma/G, V^G) \rightarrow H^1(\Gamma, V) \rightarrow H^1(G, V)^{\Gamma/G}.
\end{equation}
Since $V$ is irreducible and $G$ is normal in $\Gamma$, we either have $V^G=0$ or $V^G=V$.

{\it Case 1: $V^G=0$}. Then $V$ is a direct sum of non-trivial irreducible finite-image complex representations of $G$ on which $\gamma_N G$ acts trivially. Thus $H^1(G, V)=0$ by Theorem~\ref{coh_main} and so $H^1(\Gamma, V)=0$ by \eqref{eq:infres}.

{\it Case 2: $V^G=V$}. In this case we can deduce that $H^1(\Gamma,V)=0$ directly from Lemma~\ref{lem:LubZhuk} and the previously known result
that $\Lambda^{ab}$ is finite for every finite index subgroup $\Lambda$ of $\Gamma$ containing $G$ (see \S~\ref{sec:fabremarks}). For completeness we will give 
an argument which does not use the latter result.

Recall that the group $\Gamma/G$ is isomorphic to either $\GL_n(\dbZ)$ or $\Sp_{2g}(\dbZ)$. It is well known that finite index subgroups of $\GL_n(\dbZ)$, $n\geq 3$, and $\Sp_{2g}(\dbZ)$, $g\geq 2$, have finite abelianization (for instance, because these groups have Kazhdan's property $(T)$ -- see, e.g., \cite[\S~1.3,1.4,1.7]{BHV}). Now we can apply Lemma~\ref{lem:LubZhuk} to $\Gamma/G$ with $K$ being the trivial subgroup to conclude that $H^1(  \Gamma/G, V)=0$. 

In view of \eqref{eq:infres}, it remains to prove that $H^1(G, V)^{\Gamma/G}=0$. Let $\Lambda$ be the kernel of the action of $\Gamma$ on $V$. By our assumptions $\Lambda$ is a finite index subgroup of $\Gamma$ containing $G$. Clearly, it suffices to show that $H^1(G, V)^{\Lambda/G}=0$.

Since $G$ acts trivially on $V$, the space $H^1(G,V)$ is a direct sum of finitely many copies
of $H^1(G,\dbC)$. Moreover, since $\Lambda$ acts trivially on $V$, each of those copies is $\Lambda/G$-invariant, and the action of $\Lambda/G$ on each copy is the standard action of $\Lambda/G$ on $H^1(G,\dbC)$. Thus, it suffices to show that $H^1(G, \dbC)^{\Lambda/G}=0$.

Let $\calG=\SL_n(\dbR)$ if $\Gamma=\Aut(F_n)$ and $\calG=\Sp_{2g}(\dbR)$ if $\Gamma=\Mod_g^1$.
Let $\overline\Gamma'=\SL_n(\dbZ)$ or $\Sp_{2g}(\dbZ)$, respectively.
The description of $G^{ab}$ given in \S~\ref{sec:ianab}, \S~\ref{sec:torelli} clearly implies that the natural action of $\overline\Gamma'$ on
$H^1(G,\dbC)\cong (G^{ab}\otimes \dbC)^*$ extends to a polynomial representation of $\calG$, and it is well known (and easy to check) that the obtained representation does not contain a copy of the trivial representation. Hence $H^1(G,\dbC)^{\calG}=0$. 

Now let $\overline\Lambda'=(\Lambda/G)\cap \overline\Gamma'$. Then $\overline\Lambda'$ is a finite index subgroup of $\overline\Gamma'$ 
and hence $\overline\Lambda'$ is Zariski dense in $\calG$ (one possible argument is that $\overline\Lambda'$ has infinite intersection with each of the root subgroups of $\calG$, so the Zariski closure of $\overline\Lambda'$ contains all the root subgroups which, in turn, generate $\calG$).
Thus, the equality $H^1(G,\dbC)^{\calG}=0$ implies that $H^1(G,\dbC)^{\overline\Lambda'}=0$, and so $H^1(G, \dbC)^{\Lambda/G}=0$ as well.
\end{proof}
\begin{Remark}\rm (a) The above argument shows that in order to prove Theorem~\ref{FAb_concrete}(2), we only need to know
Theorem~\ref{coh_main} for representations $(\rho,V)$ of $G$ which are extendable to $\Gamma$. The proof of Theorem~\ref{coh_main} in this special case
is much easier (see Case~1 in \S~\ref{sec:topolya}).

(b) As mentioned in the introduction, a positive answer to Question~\ref{q:FAb2} (for all $H$) would imply the same for Question~\ref{q:FAb}.
Indeed, let $G=\IA_n$ or $\Mod_g^1$ and assume that $\dim H^1(G,\dbC)=\dim H^1(G,\dbC)$ for every finite index subgroup $H$ of $G$. 
By Lemma~\ref{lem:LubZhuk}, $H^1(G,V)=0$ for every non-trivial irreducible finite-image complex representation $V$ of $G$. The proof of Theorem~\ref{FAb_concrete}(2) then shows that $H^1(\Gamma,V)=0$ for every irreducible finite-image complex representation $V$ of $\Gamma$, so applying Lemma~\ref{lem:LubZhuk} again we conclude that every finite index subgroup of $\Gamma$ has finite abelianization.
\end{Remark}

Let us now explain why the class of subgroups $H$ satisfying the hypotheses of Theorem~\ref{FAb_concrete} is quite large. Let $\Gamma$, $G$ and $N$ be as in Theorem~\ref{FAb_concrete}. The group $G/\gamma_N G$ is finitely generated nilpotent and hence residually finite. Thus, $G/\gamma_N G$ has plenty of finite index subgroups whose preimages in $G$ satisfy the hypotheses of Theorem~\ref{FAb_concrete}(1). 

For part (2) we can use, for instance, appropriate congruence subgroups. Recall that $\Gamma$ in Theorem~\ref{FAb_concrete} is either equal to $\Aut(F)$, where $F$ is free of rank $n$, or is isomorphic to a subgroup of $\Aut(F)$, where $F$ is free of rank $2g$. Given a normal subgroup $S$ of $F$, the corresponding congruence subgroup  $\Gamma(S)$ is defined to be the set of all $f\in \Gamma$ which leave $S$ invariant and act trivially on $F/S$. Note that if $S=\gamma_{k+1} F$, then $\Gamma(S)=G(k)$, the $k^{\rm th}$ term of the Johnson filtration. If $S$ has finite index in $F$, it is clear that $\Gamma(S)$ has finite index in $\Gamma$. If in addition $F/S$ is nilpotent of class at most $N$, then $\Gamma(S)$ contains $G(N)$ and hence also contains $\gamma_N G$.

\subsection{Earlier work on Question~\ref{q:FAb}}
\label{sec:fabremarks}

In this subsection we will summarize previously known positive results on Question~\ref{q:FAb} which is restated below:

\begin{Question1.7}
Let $\Gamma=\Aut(F_n)$ or $\Mod_g^1$. If $H$ is a finite index subgroup of $\Gamma$, does $H$ always have finite abelianization? 
\end{Question1.7}

We will start with the mapping class group case. Hain~\cite[Prop~5.2]{Ha} positively answered Question~\ref{q:FAb} for $\Gamma=\Mod_g^1$, $g\geq 3$ and $H$ containing the Torelli subgroup $\calI_g^1$. Putman~\cite[Thm~B]{Pu} generalized this result to all $H$ containing the Johnson kernel $\calI_g^1(2)$.\footnote{\cite[Thm~B]{Pu} actually covers all subgroups containing a ``large chunk'' of the Johnson kernel. The results of \cite{Ha} and \cite{Pu} apply to mapping class groups of surfaces with an arbitrary number of punctures and boundary components.}
Slightly earlier Boggi~\cite[Cor~3.11(b)]{Bo} proved by a different method that $H^{ab}$ is finite for all finite index subgroups of $\Mod_g$ 
containing $\calI_g(2)$. Finally, an interesting general approach to Question~\ref{q:FAb} was suggested by Putman and Wieland~\cite{PW}.

In the case $\Gamma=\Aut(F_n)$ a positive answer to Question~\ref{q:FAb} for $H$ containing $\IA_n$, $n\geq 3$, was given
by elementary arguments by Bhattacharya~\cite{Bh}~\footnote{Our argument in Case~2 of the proof of Theorem~\ref{FAb_concrete}(2)  is similar (but not identical) to the argument from \cite{Bh}.} and independently by Bogopolski and Vikentiev~\cite{BV}. This result also follows from the earlier work of Lubotzky and Pak~\cite{LP} (see Theorem~\ref{thm:LP} below).
\vskip .1cm

The following theorem was established in the Ph.D. thesis of Kielak: 

\begin{Theorem}\rm(\cite[Thm~5.3.2]{Ki}).\it
\label{thm:Kielak}
Let $n\geq 3$ and $H$ a finite index subgroup of $\Aut(F_n)$ such that $[H\cap \IA_n,H\cap \IA_n]$ contains $\gamma_k \IA_n$ for some $k$.
Then $H$ has finite abelianization.
\end{Theorem}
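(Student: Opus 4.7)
Set $K = H \cap \IA_n$ and $\bar H = H/K$; via the natural surjection $\Aut(F_n) \twoheadrightarrow \GL_n(\dbZ)$, $\bar H$ identifies with a finite-index subgroup of $\GL_n(\dbZ)$. The plan is to prove $H^1(H, \dbC) = 0$, which, since $H$ is finitely generated, is equivalent to $H^{ab}$ being finite. First, since $n \geq 3$, Kazhdan's property~(T) of $\SL_n(\dbZ)$ yields $H^1(\bar H, \dbC) = 0$, so the five-term exact sequence of $1 \to K \to H \to \bar H \to 1$ gives an injection $H^1(H, \dbC) \hookrightarrow H^1(K, \dbC)^{\bar H} = \Hom(K^{ab}, \dbC)^{\bar H}$, reducing the problem to showing $(K^{ab} \otimes \dbC)^{\bar H} = 0$.

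Next I would exploit the hypothesis $[K, K] \supseteq \gamma_k \IA_n$: it forces $K^{ab}$ to be a quotient of the finitely generated nilpotent group $K/\gamma_k \IA_n$. The canonical filtration by $K_i := (K \cap \gamma_i \IA_n) \cdot \gamma_k \IA_n$ has abelian subquotients $K_i/K_{i+1} \cong (K \cap \gamma_i \IA_n)/(K \cap \gamma_{i+1} \IA_n)$ for $1 \leq i < k$, each of finite index inside $\gamma_i \IA_n/\gamma_{i+1}\IA_n$ and $\bar H$-equivariantly embedded there. Tensoring with $\dbC$ yields a $\bar H$-equivariant filtration of $K^{ab}\otimes\dbC$ whose graded pieces are quotients of $(\gamma_i \IA_n/\gamma_{i+1}\IA_n)\otimes\dbC$, and it suffices to show none of these pieces carries a nonzero $\bar H$-invariant.

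Each $(\gamma_i \IA_n/\gamma_{i+1}\IA_n)\otimes\dbC$ is a quotient of the polynomial $\GL_n(\dbC)$-module $(V^*\otimes\Lambda^2 V)^{\otimes i}$ via the iterated Lie bracket, hence a rational semisimple $\SL_n(\dbC)$-module of $\GL_n$-central weight $i$. Since $\bar H$ is Zariski-dense in $\SL_n(\dbC)$ by Borel density, $\bar H$-invariants and $\SL_n(\dbC)$-invariants coincide on such modules. A weight computation on $V^*\otimes\Lambda^2 V$ (whose weights are of the form $-\varepsilon_a+\varepsilon_b+\varepsilon_c$ with $b<c$) shows $(V^*\otimes\Lambda^2 V)^{\otimes i}$ has no $\SL_n(\dbC)$-invariants whenever $n \nmid i$, and hence none in any of its quotients.

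The main obstacle will be the case $n \mid i$, where weight-sum considerations permit in principle a copy of a power of the determinant representation (which restricts trivially to $\SL_n$). The delicate point, and the place where the strengthened hypothesis $[K,K]\supseteq\gamma_k\IA_n$ (rather than merely $\gamma_k\IA_n\subseteq H$) is essential, is to verify that no such determinantal summand survives in the image of the iterated bracket $(V^*\otimes\Lambda^2 V)^{\otimes i} \twoheadrightarrow \gamma_i\IA_n/\gamma_{i+1}\IA_n$. For this I would invoke the Jacobi identity (which kills the totally antisymmetric part already in the free Lie algebra) together with the Andreadakis--Magnus description of the graded Lie ring $\gr(\IA_n)$, using crucially that $K^{ab}$ lies in the bracket-generated part of $\gr(\IA_n)$. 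Once this vanishing is verified at each graded level, a standard devissage along the filtration concludes the proof.
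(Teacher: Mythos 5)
Note first that the paper does not prove this statement: it is attributed to Kielak's thesis and only cited, so there is no in-paper proof to compare against. I will therefore just assess your proposal. Your opening reductions are sound — $H^1(H,\dbC)\hookrightarrow H^1(K,\dbC)^{\bar H}$ via the five-term sequence and property~$(T)$, then filter $K^{ab}$ by the images of $K\cap\gamma_i\IA_n$ and pass to $\dbC$-coefficients; the graded pieces are $\bar H$-quotients of $L(i)\otimes\dbC$, these are semisimple $\bar H$-modules by Borel density, and the central-weight count rules out $\SL_n(\dbC)$-invariants when $n\nmid i$. The genuine gap, which you flag but do not close, is the case $n\mid i$: you would need to show that no power of the determinant occurs in $(\gamma_i\IA_n/\gamma_{i+1}\IA_n)\otimes\dbC$ (or at least not in the specific $\bar H$-quotient you obtain). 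Invoking ``the Jacobi identity'' and ``the Andreadakis--Magnus description of $\gr(\IA_n)$'' is not an argument here: the Lie ring $\gr(\IA_n)$ is \emph{not} fully understood in higher degrees, the free Lie algebra already on $V^*\otimes\Lambda^2 V$ certainly admits determinantal constituents in degrees divisible by $n$, and the sentence ``$K^{ab}$ lies in the bracket-generated part of $\gr(\IA_n)$'' does not parse into a usable statement. As written, the proof would need a nontrivial representation-theoretic input at exactly this step, and it is not supplied.

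There is in fact a much shorter route that removes the obstruction entirely. The hypothesis $[K,K]\supseteq\gamma_k\IA_n$ gives $K^{ab}=(K/\gamma_k\IA_n)^{ab}$, and $K/\gamma_k\IA_n$ is a \emph{finite-index} subgroup of the finitely generated nilpotent group $N=\IA_n/\gamma_k\IA_n$. Finite-index subgroups of finitely generated nilpotent groups have isomorphic Malcev ($\dbQ$-)completions, and for a $\dbQ$-unipotent group the abelianization commutes with completion, so the natural ($\bar H$-equivariant) map $K^{ab}\otimes\dbQ\to N^{ab}\otimes\dbQ=\IA_n^{ab}\otimes\dbQ$ is an isomorphism. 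Hence $K^{ab}\otimes\dbC\cong V^*\otimes\Lambda^2 V\otimes\dbC$ as a $\bar H$-module, whose $\GL_n$-central weight is $1$; since $n\geq 3$, there is no $\det$-power constituent and thus no $\bar H$-invariants, and $H^1(H,\dbC)=0$. This collapses your whole filtration to the degree-one piece and never encounters the $n\mid i$ problem; if you want to salvage your write-up, replacing the filtration-plus-devissage by this single Malcev-completion observation is the fix.
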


A very similar result follows from \cite[Thm~3.8]{LP} which asserts that $\Aut(F_n)/\IA_n(k)$ has Kazhdan's property $(T)$ for all $n\geq 3, k\geq 1$:

\begin{Theorem}\rm
\label{thm:LP}
Let $n\geq 3$ and $H$ a finite index subgroup of $\Aut(F_n)$ such that $[H,H]$ contains $\IA_n(k)$ (the $k^{\rm th}$ term of the Johnson filtration) for some $k$. Then $H$ has finite abelianization.
\end{Theorem}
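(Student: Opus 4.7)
The proof is essentially a direct consequence of the Lubotzky--Pak result combined with two standard facts about Kazhdan's property $(T)$. The overall strategy is to observe that $H^{ab}$ is a quotient of the finite-index subgroup $H/\IA_n(k)$ of the $(T)$-group $\Aut(F_n)/\IA_n(k)$, and to exploit the fact that an abelian discrete group with property $(T)$ must be finite.

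More concretely, the plan is as follows. First, since $[H,H] \supseteq \IA_n(k)$, the abelianization $H^{ab} = H/[H,H]$ is naturally a quotient of $H/\IA_n(k)$. Second, since $H$ has finite index in $\Aut(F_n)$, the image of $H$ in $\Aut(F_n)/\IA_n(k)$ is a finite-index subgroup, and this image is canonically isomorphic to $H/(H \cap \IA_n(k))$; since $H \supseteq [H,H] \supseteq \IA_n(k)$, we have $H \cap \IA_n(k) = \IA_n(k)$, so this image is just $H/\IA_n(k)$. Thus $H/\IA_n(k)$ embeds as a finite-index subgroup of $\Aut(F_n)/\IA_n(k)$.

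Third, by the Lubotzky--Pak theorem \cite[Thm~3.8]{LP}, $\Aut(F_n)/\IA_n(k)$ has Kazhdan's property $(T)$ for $n\geq 3$. Since property $(T)$ passes to finite-index subgroups (see, e.g., \cite[\S~1.7]{BHV}), $H/\IA_n(k)$ also has property $(T)$. Since property $(T)$ passes to quotients (again \cite[\S~1.7]{BHV}), the further quotient $H^{ab}$ has property $(T)$.

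Finally, we apply the well-known fact that a discrete abelian group with property $(T)$ is finite: property $(T)$ together with amenability implies compactness, and $H^{ab}$, being a finitely generated discrete abelian group (finitely generated since $H$ is finitely generated as a finite-index subgroup of the finitely generated group $\Aut(F_n)$), is compact precisely when it is finite. This concludes the proof. There is no serious obstacle here; the entire argument is a two-step application of permanence properties of property $(T)$, and the only non-trivial ingredient is the Lubotzky--Pak theorem, which is assumed as a black box.
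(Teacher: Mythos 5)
Your proof is correct and takes essentially the same approach as the paper: both rely on the Lubotzky--Pak theorem that $\Aut(F_n)/\IA_n(k)$ has property~$(T)$, combined with standard permanence/consequence facts about property~$(T)$. The only cosmetic difference is that the paper uses ``finite-index subgroups of a $(T)$-group have finite abelianization'' and then pulls back along the projection, whereas you push property~$(T)$ through to the quotient $H^{ab}$ itself and then invoke ``abelian discrete $(T)$-group is finite''; these are the same observation dressed slightly differently.
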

\begin{proof} Let $\Gamma=\Aut(F_n)$ and $f:\Aut(F_n)\to\Aut(F_n)/\IA_n(k)$ the natural projection. Since $H$ has finite index in $\Gamma$, its image $f(H)$ has finite index in $f(\Gamma)$. Since $f(\Gamma)$ has property $(T)$, $f(H)$ has finite abelianization, whence $f([H,H])=[f(H),f(H)]$ has finite index in $f(\Gamma)$. Finally, since $[H,H]\supseteq \IA_n(k)=\Ker(f)$, we have 
$[H,H]=f^{-1}(f([H,H]))$, so $[H,H]$ has finite index in $\Gamma$ and thus $H^{ab}$ is finite.
\end{proof}

Corollary~\ref{cor:fgab} implies that Theorem~\ref{thm:Kielak} applies to any subgroup $H$ satisfying the hypotheses of Theorem~\ref{FAb_concrete}(2) with $\Gamma=\Aut(F_n)$. However this does not give a substantially new proof of Theorem~\ref{FAb_concrete}(2) for $\Gamma=\Aut(F_n)$ since the proof of Corollary~\ref{cor:fgab} also uses Theorem~\ref{coh_main}. It would be interesting to find other classes
of subgroups which can be explicitly constructed and satisfy the hypotheses of Theorem~\ref{thm:Kielak} or Theorem~\ref{thm:LP}.

\subsection{Some negative results for $\IA_3$}
\label{sec:fabia3}

In this final subsection we explain why the assertions of Theorem~\ref{coh_main} and both parts of Theorem~\ref{FAb_concrete} do not hold for $G=\IA_3$ and $N=2$. More explicitly, the following is true:

\begin{Proposition} 
\label{prop:ia3}
The following hold:
\begin{itemize}
\item[(a)] There exists a finite index subgroup $\Delta$ of $\Aut(F_3)$ such that $\Delta$ has infinite abelianization and
$[\IA_3:\Delta\cap \IA_3]=2$.
\item[(b)] There exists a subgroup $L$ of $\IA_3$ with $[\IA_3:L]=2$ (so in particular, $L\supseteq [\IA_3,\IA_3]$)
 such that $\dim H^1(L,\dbC)>\dim H^1(\IA_3,\dbC)$.
\item[(c)] There exists a one-dimensional complex representation $(\rho,V)$ of $\IA_3$ with $|\rho(\IA_3)|=2$ such that $H^1(\IA_3,V)\neq 0$.
\end{itemize}
\end{Proposition}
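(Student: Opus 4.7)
Parts (b) and (c) are equivalent via Shapiro's lemma: for any index-$2$ subgroup $L\subseteq\IA_3$ with associated character $\rho:\IA_3\to\{\pm 1\}$ (having kernel $L$), Lemma~\ref{lem:Shapiro} gives
\[
H^1(L,\dbC)\cong H^1(\IA_3,\dbC)\oplus H^1(\IA_3,\rho),
\]
so $\dim H^1(L,\dbC)>\dim H^1(\IA_3,\dbC)$ is equivalent to $H^1(\IA_3,\rho)\neq 0$. Both (b) and (c) will therefore follow once we establish (a) together with the extra property that the induced character on $L=\Delta\cap\IA_3$ obtained by restricting a surjection $\Delta\twoheadrightarrow\dbZ$ fails to extend to a character of $\IA_3$.

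For (a), the starting point is the construction of Grunewald-Lubotzky~\cite{GL} and McCool~\cite{Mc}, which provides a finite-index $\Delta_0\subseteq\Aut(F_3)$ admitting a surjection $\phi_0:\Delta_0\twoheadrightarrow\dbZ$. A key preliminary observation is that $\phi_0$ cannot vanish on $\Delta_0\cap\IA_3$: otherwise $\phi_0$ would factor through $\Delta_0/(\Delta_0\cap\IA_3)$, a finite-index subgroup of $\Aut(F_3)/\IA_3\cong\GL_3(\dbZ)$, which has finite abelianization since $\SL_3(\dbZ)$ has Kazhdan's property $(T)$. Composing $\phi_0|_{\Delta_0\cap\IA_3}$ with the reduction $\dbZ\to\dbZ/2$ yields a surjection $\overline{\phi}:\Delta_0\cap\IA_3\twoheadrightarrow\dbZ/2$. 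I would then choose an index-$2$ subgroup $L\subseteq\IA_3$ with $L\cap(\Delta_0\cap\IA_3)=\ker\overline{\phi}$ (possible by extending a suitable mod-$2$ character of $\Delta_0\cap\IA_3$ to $\IA_3$, possibly after first passing to a further finite-index subgroup of $\Delta_0$) and set $\Delta=\Delta_0\cdot L$; routine checks give $[\Aut(F_3):\Delta]<\infty$, $\Delta\cap\IA_3=L$, and $\Delta^{ab}$ still infinite (the latter via the character on $\Delta$ that restricts to $\phi_0$ on $\Delta_0$ and vanishes on $L$).

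To derive (c) from (a), let $\widetilde{\phi}:\Delta\twoheadrightarrow\dbZ$ be a surjection and $\phi=\widetilde{\phi}|_L$. The same property $(T)$ argument applied to $\Delta/L$ (which has finite index in $\GL_3(\dbZ)$) shows $\phi\neq 0$. I claim that $\phi$ does not lie in the image of $H^1(\IA_3,\dbR)\to H^1(L,\dbR)$, which by inflation-restriction for $\IA_3/L\cong\dbZ/2$ equals $H^1(L,\dbR)^{\IA_3/L}$; this is precisely the assertion $H^1(\IA_3,\rho)\neq 0$. If $\phi$ extended to some $\widetilde{\psi}:\IA_3\to\dbR$, then $\widetilde{\phi}$ and $\widetilde{\psi}$, agreeing on $L=\Delta\cap\IA_3$, would combine to a well-defined character $\Psi$ on the finite-index subgroup $\Delta\cdot\IA_3\subseteq\Aut(F_3)$. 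Arranging the construction so that $\Delta\cdot\IA_3=\Aut(F_3)$ (by enlarging $\Delta_0$ so as to surject onto $\Aut(F_3)/\IA_3\cong\GL_3(\dbZ)$), this would yield a nontrivial character on $\Aut(F_3)$, contradicting $\Aut(F_3)^{ab}=\dbZ/2$.

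The main obstacle is arranging all the required properties of $\Delta$ simultaneously: infinite abelianization, $[\IA_3:\Delta\cap\IA_3]=2$ exactly, and $\Delta\cdot\IA_3=\Aut(F_3)$. This is delicate because enlarging $\Delta_0$ can destroy the infinite character, while shrinking it can make $\Delta\cdot\IA_3$ proper in $\Aut(F_3)$. The cleanest route is likely to work directly with the explicit cohomology classes $\alpha\in H^1(\Aut(F_3),M)$ for suitable arithmetic $\Aut(F_3)$-modules $M$ underlying the Grunewald-Lubotzky/McCool construction, and to define $\Delta$ via the associated $1$-cocycle rather than as an \emph{ad hoc} modification of a given $\Delta_0$.
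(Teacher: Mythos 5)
The equivalence of (b) and (c) via Shapiro's lemma (Lemma~\ref{lem:Shapiro}) is correct and matches the paper. The rest of your proposal, however, has two genuine problems.

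First, you do not seem to realize that part (a) is \emph{precisely} the content of Grunewald--Lubotzky~\cite[Prop~6.2]{GL}, which the paper cites directly. Your attempt to reconstruct it (starting from a hypothetical $\Delta_0$ with a $\dbZ$-character, then modifying it by an index-$2$ subgroup $L$ of $\IA_3$, etc.) is both unnecessary and incomplete: extending a mod-$2$ character of $\Delta_0\cap\IA_3$ to $\IA_3$ is not automatic, and your hedge ``possibly after first passing to a further finite-index subgroup'' simply acknowledges a gap without closing it. You also concede in the final paragraph that you do not actually know how to arrange all the required properties simultaneously.

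Second, the central step of your derivation of (c) --- gluing $\widetilde{\phi}:\Delta\to\dbR$ and $\widetilde{\psi}:\IA_3\to\dbR$ into a character $\Psi$ on $\Delta\cdot\IA_3$ --- is not valid as stated. Two homomorphisms defined on subgroups $A$ and $B$ that agree on $A\cap B$ do not in general combine to a homomorphism on $AB$; writing $\Psi(\delta a)=\widetilde\phi(\delta)+\widetilde\psi(a)$ is well defined, but for it to be multiplicative you need $\widetilde\psi(\delta^{-1}a\delta)=\widetilde\psi(a)$ for all $\delta\in\Delta$, $a\in\IA_3$, which is exactly the $\Delta$-conjugation-invariance of $\widetilde\psi$ that you have not established (and which, if it did hold, would directly give $\widetilde\psi=0$ by the $\SL_3(\dbR)$-irreducibility of $\IA_3^{ab}\otimes\dbR$, with no need for the detour through $\Aut(F_3)^{ab}$). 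Your underlying idea can be repaired, but not by the gluing you propose.

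For comparison, the paper's argument for (c) is much shorter and avoids these pitfalls entirely. Set $\Delta'=\IA_3\cdot\Delta$. Then $\Delta'$ is a finite index subgroup of $\Aut(F_3)$ containing $\IA_3$, so $\Delta'^{ab}$ is finite by Theorem~\ref{thm:Kielak} or~\ref{thm:LP}. Since $[\Delta':\Delta]=[\IA_3:\Delta\cap\IA_3]=2$, $\Delta$ is normal in $\Delta'$, and Lemma~\ref{lem:Shapiro} applied to $(\Delta',\Delta)$ gives $H^1(\Delta,\dbC)\cong H^1(\Delta',\dbC)\oplus H^1(\Delta',V)$ for the nontrivial character $V$ of $\Delta'/\Delta$. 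Finiteness of $\Delta'^{ab}$ gives $H^1(\Delta',\dbC)=0$, infiniteness of $\Delta^{ab}$ gives $H^1(\Delta,\dbC)\neq 0$, so $H^1(\Delta',V)\neq 0$; and since $V^{\IA_3}=0$, inflation--restriction shows $H^1(\Delta',V)$ embeds in $H^1(\IA_3,V)$. This uses only Shapiro and the finite-abelianization results already available in the paper, and it never needs to glue characters.
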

\begin{proof} (a) follows immediately from the work of Grunewald and Lubotzky~\cite[Prop~6.2]{GL} (see also \cite{BV}). We note that the existence of some finite index subgroup of $\Aut(F_3)$ with infinite abelianization was established earlier by McCool~\cite{Mc} by a different method.

(c) Let $\Delta$ be as in (a), and let $\Delta'=\IA_3\cdot\Delta$. Then $\Delta'$ is a finite index subgroup of $\Aut(F_3)$ containing $\IA_3$, so $\Delta'$ has finite abelianization by Theorem~\ref{thm:Kielak}~or~\ref{thm:LP}. Note that $[\Delta':\Delta]=[\IA_3:\Delta\cap \IA_3]=2$, so in particular $\Delta$ is normal in $\Delta'$. Applying Lemma~\ref{lem:Shapiro} with $G=\Delta'$ and $H=\Delta$, we get $H^1(\Delta,\dbC)\cong H^1(\Delta',\dbC)\oplus H^1(\Delta',V)$ where $(\rho,V)$ is the unique non-trivial irreducible complex representation of $\Delta'/\Delta$ 
(in particular $\dim V=1$ and $|\rho(\IA_3)|=|\rho(\Delta')|=2$). 

Since $\Delta'$ has finite abelianization while $\Delta$ does not, we have $H^1(\Delta',\dbC)=0$ and $H^1(\Delta,\dbC)\neq 0$,
whence $H^1(\Delta',V)\neq 0$. Since $V$ is irreducible and non-trivial, we have $V^{\IA_3}=V^{\Delta'}=0$. Thus, by inflation-restriction sequence $H^1(\Delta',V)$ embeds in $H^1(\IA_3,V)$, so $H^1(\IA_3,V)\neq 0$ and $(\rho,V)$ has the desired properties.

(b) Let $(\rho,V)$ be as in (c) and $L=\Ker\rho$. Then $(\rho,V)$ is the unique non-trivial irreducible complex representation of $\IA_3/L$. 
By Lemma~\ref{lem:Shapiro} with $G=\IA_3$ and $H=L$ we have $H^1(L,\dbC)\cong H^1(\IA_3,\dbC)\oplus H^1(\IA_3,V)$, which implies (b).
\end{proof}

\end{document}